\g@addto@macro{\UrlBreaks}{\UrlOrds}
\renewcommand{\@algocf@capt@plain}{above}% formerly {bottom}
\newcommand{\oprocendsymbol}{\hbox{$\bullet$}}
\newcommand{\oprocend}{\relax\ifmmode\else\unskip\hfill\fi\oprocendsymbol}
\def\eqoprocend{\tag*{$\bullet$}}
\newcommand{\longthmtitle}[1]{\mbox{}\textup{\textbf{(#1):}}}
\newtheorem{theorem}{Theorem}[section]
\newtheorem{lemma}[theorem]{Lemma}
\newtheorem{remark}[theorem]{Remark}
\newtheorem{corollary}[theorem]{Corollary}
\newtheorem{proposition}[theorem]{Proposition}
\newtheorem{assumption}[theorem]{Assumption}
\newcommand{\phicc}{\bm{\phi}}
\newcommand{\lowergamma}{\underline{\gamma}}
\newcommand{\dualoptset}{\zset^*}
\newcommand{\zsetcom}{\zset_c}
\newcommand{\mbk}{\mb}
\newcommand{\tlapbk}{\tlapb}
\newcommand{\lapkbk}{\lapkb}
\newcommand{\subdarg}[1]{d_{#1}}
\newcommand{\dimwn}{d_{\wn}}
\newcommand{\dimzn}{d_{\zn}}
\newcommand{\dimmun}{d_{\mun}}
\newcommand{\dimdn}{d_{\dnm}}
\newcommand{\realdimzn}{(\real^{\dimzn})^N}
\newcommand{\realdimdn}{(\real^{\dimdn})^N}
\newcommand{\realdimw}{\real^{\dimwn}}
\newcommand{\realdimz}{\real^{\dimzn}}
\newcommand{\realdimmu}{\real^{\dimmun}}
\newcommand{\realdimd}{\real^{\dimdn}}
\newcommand{\fen}{^\star}
\newcommand{\lag}{\mathcal{L}}
\newcommand{\subgwnt}{g_{\bm{w}_t}}
\newcommand{\subgwns}{g_{\bm{w}_s}}
\newcommand{\subgznt}{g_{\zn_t}}
\newcommand{\projec}[2]{\mathcal{P}_{#1}\big(#2\big)}
\newcommand{\estboundzn}{B_{\zn}}
\newcommand{\estboundwn}{B_{\wn}}
\newcommand{\estboundmun}{B_{\mun}}
\newcommand{\estbounddnm}{B_{\dnm}}
\newcommand{\pertdnmt}{\bm{u}^1_t}
\newcommand{\pertznt}{\bm{u}^2_t}
\newcommand{\persdnmt}{\bm{u}^1_s}
\newcommand{\bsubgzn}{H_{\zn}}
\newcommand{\bsubgwn}{H_{\wn}}
\newcommand{\bsubgmun}{H_{\mun}}
\newcommand{\bsubgdnm}{H_{\dnm}}
\newcommand{\consissu}{C_{u}}
\newcommand{\mun}{\bm{\mu}}
\newcommand{\munt}{\mun_t}
\newcommand{\rnxtp}{\mathbf{r}_{\dnm,t+1}}
\newcommand{\rnwtp}{\mathbf{r}_{\wn,t+1}}
\newcommand{\wnsetan}{\boldsymbol{\mathcal{W}}}
\newcommand{\wset}{\mathcal{W}}
\newcommand{\wseti}{\wset_i}
\newcommand{\prodwseti}{(\wset_1\times\cdots\times\wset_N)}
\newcommand{\prodwsetiwo}{\wset_1\times\cdots\times\wset_N}
\newcommand{\zset}{\mathcal{Z}}
\newcommand{\znset}{\zset^N}
\newcommand{\munsetan}{\boldsymbol{\mathcal{M}}}
\newcommand{\cartnset}{\bm{S}}
\newcommand{\dnmp}{\dnm_{p}}
\newcommand{\dnm}{\text{\scalebox{0.8}{$\bm{D}$}}}
\newcommand{\dm}{\text{\scalebox{0.8}{$D$}}}
\newcommand{\subgmunt}{g_{\mun_t}}
\newcommand{\subgdnmt}{g_{\dnm_t}}
\newcommand{\subgdnms}{g_{\dnm_s}}
\newcommand{\dnmset}{\dmset^N}
\newcommand{\dmset}{\mathcal{D}}
\newcommand{\dnmt}{\dnm_t}
\newcommand{\dnmtp}{\dnm_{t+1}}
\newcommand{\dnmstar}{\dnm^*}
\newcommand{\dnmtphat}{\hat{\dnm}_{t+1}}
\newcommand{\dnms}{\dnm_s}
\newcommand{\wntphat}{\hat{\wn}_{t+1}}
\newcommand{\gradsteps}{\eta_s}
\newcommand{\gradstepsm}{\eta_{s-1}}
\newcommand{\wnp}{\wn_{p}}
\newcommand{\wns}{\wn_{s}}
\newcommand{\zns}{\zn_{s}}
\newcommand{\znp}{\zn_{p}}
\newcommand{\muns}{\mun_{s}}
\newcommand{\munp}{\mun_{p}}
\newcommand{\dnmtav}{\dnm_t^{\text{av}}}
\newcommand{\zntav}{\zn_t^{\text{av}}}
\newcommand{\muntav}{\mun_t^{\text{av}}}
\newcommand{\wntav}{\wn_t^{\text{av}}}
\newcommand{\dmit}{\dm_t^i}
\newcommand{\wit}{w_t^i}
\newcommand{\dmiT}{\dm_T^i}
\newcommand{\ziT}{z_T^i}
\newcommand{\wiT}{w_T^i}
\newcommand{\dmitp}{\dm_{t+1}^i}
\newcommand{\zitp}{z_{t+1}^i}
\newcommand{\witp}{w_{t+1}^i}
\newcommand{\dmitpav}{{(\dmitp)}^{\text{av}}}
\newcommand{\zitpav}{{(\zitp)}^{\text{av}}}
\newcommand{\witpav}{{(\witp)}^{\text{av}}}
\newcommand{\dmitav}{{(\dmit)}^{\text{av}}}
\newcommand{\zitav}{{(\zit)}^{\text{av}}}
\newcommand{\witav}{{(\wit)}^{\text{av}}}
\newcommand{\dmiTav}{{(\dmiT)}^{\text{av}}}
\newcommand{\ziTav}{{(\ziT)}^{\text{av}}}
\newcommand{\wiTav}{{(\wiT)}^{\text{av}}}
\newcommand{\munstar}{\mun^*}
\newcommand{\muntpav}{\mun_{t+1}^{\text{av}}}
\newcommand{\dnmtpav}{\dnm_{t+1}^{\text{av}}}
\newcommand{\dnmtspav}{\dnm_{{t}'+1}^{\text{av}}}
\newcommand{\zntpav}{\zn_{t+1}^{\text{av}}}
\newcommand{\wntpav}{\wn_{t+1}^{\text{av}}}
\newcommand{\wntspav}{\wn_{{t}'+1}^{\text{av}}}
\newcommand{\zbar}{\bar{z}}
\newcommand{\sign}{\operatorname{sign}}
\newcommand{\diam}{\operatorname{diam}}
\newcommand{\ballc}[2]{\bar{\mathcal{B}}(#1,#2)}
\newcommand{\ballcbig}[2]{\bar{\mathcal{B}}\Big(#1,#2\Big)}
\newcommand{\degnt}{{\tilde{\delta}}}
\newcommand{\degn}{\delta}
\newcommand{\doutmaxT}{\subscr{d}{\text{max}}}
\newcommand{\doutt}{\subscr{d}{out,t}}
\newcommand{\inputet}{e_t}
\newcommand{\inputes}{e_s}
\newcommand{\upperb}{\mathsf{u}}
\newcommand{\wn}{\bm{w}}
\newcommand{\wnt}{\bm{w}_t}
\newcommand{\wntp}{\bm{w}_{t+1}}
\newcommand{\wnstarwo}{\bm{w}^*}
\newcommand{\lambdaup}{\overline{\Lambda}}
\newcommand{\znstarwo}{\zn^*}
\newcommand{\zn}{\bm{z}}
\newcommand{\zntphat}{\hat{\zn}_{t+1}}
\newcommand{\muntphat}{\hat{\mun}_{t+1}}
\newcommand{\znt}{\zn_{t}}
\newcommand{\zntp}{\zn_{t+1}}
\newcommand{\muntp}{\mun_{t+1}}
\newcommand{\zit}{{z}_{t}^i}
\newcommand{\tlapb}{\mathbf{L}_t}
\newcommand{\tlap}{\mathsf{L}_t}
\newcommand{\lapkbdouble}{\hat{\mathbf{L}}_{\mathcal{K}}}
\newcommand{\cstep}{\sigma}
\newcommand{\gradstept}{\eta_t}
\newcommand{\gradstep}{\eta}
\newcommand{\step}{\sigma}
\newcommand{\norm}[1]{\|#1\|_{\text{\scalebox{1}{$2$}}}}
\newcommand{\normone}[1]{\|#1\|_{\text{\scalebox{1}{$1$}}}}
\newcommand{\cdoubling}[2]{\hat{\alpha}_{#1, #2}}
\newcommand{\cdoublingfac}[2]{\alpha_{#1, #2}}
\newcommand{\absolute}[1]{|#1|}
\newcommand{\diag}{\operatorname{diag}}
\newcommand{\lambdamax}{\operatorname{\lambda_{\text{max}}}}
\newcommand{\lambdamin}{\operatorname{\lambda_{\text{min}}}}
\newcommand{\sigmamax}{\operatorname{\sigma_{\text{max}}}}
\newcommand{\tp}{^{\text{\scalebox{0.8}{$\top$}}}}
\newcommand{\defin}{:=}
\newcommand{\identity}{\mathrm{I}}
\newcommand{\zeros}[1]{\mathrm{0}_{#1}}
\newcommand{\ones}{\mathds{1}}
\newcommand{\graph}{\mathcal{G}}
\newcommand{\edgeset}{\mathcal{E}}
\newcommand{\vertexset}{\mathcal{I}}
\newcommand{\din}{\subscr{d}{in}}
\newcommand{\dout}{\subscr{d}{out}}
\newcommand{\Nout}{\upscr{\mathcal{N}}{out}}
\newcommand{\Adj}{\mathsf{A}}
\newcommand{\adj}{\mathsf{a}}
\newcommand{\lapkb}{\mathbf{L}_{\mathcal{K}}}
\newcommand{\lap}{\mathsf{L}}
\newcommand{\lapk}{\mathsf{L}_{\mathcal{K}}}
\newcommand{\matrixM}{\mathrm{M}}
\newcommand{\mb}{\mathbf{M}}
\newcommand{\realmatricesarg}[1]{\mathbb{R}^{#1\times #1}}
\newcommand{\realnonnegmatricesarg}[1]{\mathbb{R}_{\text{\scalebox{0.7}{$\ge 0$}}}^{#1\times #1}}
\newcommand{\real}{{\mathbb{R}}}
\newcommand{\realpositive}{{\mathbb{R}}_{> 0}}
\newcommand{\realnonnegative}{{\mathbb{R}}_{\ge 0}}
\newcommand{\integerspositive}{\mathbb{Z}_{\geq 1}}
\newcommand{\eps}{\epsilon}
\newcommand{\argmin}{\operatorname{arg \min}}
\newcommand{\until}[1]{\{1,\dots,#1\}}
\newcommand{\map}[3]{#1:#2 \rightarrow #3}
\newcommand{\setdef}[2]{\{#1 \, : \, #2\}}
\newcommand{\setdefbig}[2]{\big\{\,#1 \; : \; #2\,\big\}}
\newcommand\subscr[2]{#1_{\textup{#2}}}
\newcommand\upscr[2]{#1^{\textup{#2}}}
\begin{document}

%\title{Distributed subgradient methods \\ for saddle-point
%  problems\thanks{A preliminary version of this work has been accepted
%    as~\cite{DMN-JC:15-cdc} at the 2015 IEEE Conference on Decision
%    and Control, Osaka, Japan}}
\title{Distributed saddle-point subgradient algorithms\\ with
  Laplacian averaging\thanks{A preliminary version of this work has
    been accepted as~\cite{DMN-JC:15-cdc} at the 2015 IEEE Conference
    on Decision and Control, Osaka, Japan}}

\author{David Mateos-N\'u\~nez \qquad Jorge Cort\'es \thanks{The
    authors are with the Department of Mechanical and Aerospace
    Engineering, University of California, San Diego, USA, {\tt \small
      \{dmateosn,cortes\}@ucsd.edu}.}}

\maketitle

\begin{abstract}
  We present distributed subgradient methods for min-max problems with
  agreement constraints on a subset of the arguments of both the
  convex and concave parts.  Applications include
  %
  % \margind{Jorge, I am sure the extension of our algorithm is
  % straightforward but, as it is, it cannot be said that the
  % semidefinite case is a particular case of constraints that are sum
  % of convex functions. If you write the LMIs using minors, for
  % instance, the set my be the same but each constraint separately
  % may not be even convex (although the intersection of sets is) or
  % the components in the sum may not be convex! The formal paralelism
  % in the language of LMIs allows a formal extension, but one has to
  % be careful with the wording. My policy is that, no matter how
  % trivial it \textit{seems}, if we do not prove it, is safer not to
  % say it}
  %
  % minimization problems with semidefinite constraints and
  % separable constrained minimization problems
  constrained minimization problems where each constraint is a sum of
  convex functions in the local variables of the agents. In the latter
  case, the proposed algorithm reduces to primal-dual updates using
  local subgradients and Laplacian averaging on local copies of the
  multipliers associated to the global constraints.
  % The framework also encodes minimization problems with semidefinite
  % constraints, which results in novel distributed strategies that
  % are scalable if the order of the matrix inequalities is
  % independent of the network size.
  For the case of general convex-concave saddle-point problems, our analysis
  establishes the convergence of the running time-averages of the
  local estimates to a saddle point under periodic connectivity of the
  communication digraphs.  Specifically, choosing the gradient
  step-sizes in a suitable way, we show that the evaluation error is
  proportional to~$1/\sqrt{t}$, where~$t$ is the iteration step.
  {\color{black} We illustrate our results in simulation for an
    optimization scenario with nonlinear constraints coupling the
    decisions of agents that cannot communicate directly.  }
%Simulations illustrate our results.
\end{abstract}

\section{Introduction}
%
% \margind{Jorge, during the time at Salk I read about variational
% inequalities, and how useful they are to model equilibrium problems
% in networks. Saddle-point problems are equivalent to variational
% inequality problems}
%
{\color{black} Saddle-point problems arise in constrained optimization
  via the Lagrangian formulation and, more generally, are equivalent
  to variational inequality problems.
  % and in min-max problems in game-theoretic models.
  These formulations find applications in cooperative control of
  multi-agent systems, in machine learning and game theory, and in
  equilibrium problems in networked systems,} motivating the study of
distributed strategies that are guaranteed to converge, scale well
with the number of agents, and are robust against a variety of
failures and uncertainties. Our objective in this paper is to design
and analyze distributed algorithms to solve general convex-concave
saddle-point problems.

\subsubsection*{Literature review}
This work builds on three related areas: iterative methods for
saddle-point problems~\cite{KA-LH-HU:58,AN-AO:09-jota}, dual
decompositions for constrained optimization~\cite[Ch. 5]{NP-SB:13},
\cite{SB-NP-EC-BP-JE:11}, and consensus-based distributed optimization
algorithms; see,
e.g.,~\cite{BJ-TK-MJ-KHJ:08,AN-AO:09,JW-NE:11,MZ-SM:12,RC-GN-LS-DV:15,BG-JC:14-tac}
and references therein.  Historically, these fields have been driven
by the need of solving constrained optimization problems and by an
effort of parallelizing the
computations~\cite{JNT:84,JNT-DPB-MA:86,DPB-JNT:97}, leading to
consensus approaches that allow different processors with local
memories to update the same components of a vector by averaging their
estimates.  Saddle-point or min-max problems arise in optimization
contexts such as worst-case design, exact penalty functions, duality
theory, and zero-sum games, see e.g.~\cite{DPB-AN-AEO:03},
{\color{black} and are equivalent to the variational inequality
  framework~\cite{GS-DPP-FF-JSP:10}, which includes as particular
  cases constrained optimization and many other equilibrium models
  relevant to networked systems, including traffic~\cite{SD:80} and
  supply chain~\cite{AN-MY-AHM-LSN:10}.} In a centralized scenario,
the work~\cite{KA-LH-HU:58} studies iterative subgradient methods to
find saddle points of a Lagrangian function and establishes
convergence to an arbitrarily small neighborhood depending on the
gradient stepsize. Along these lines,~\cite{AN-AO:09-jota} presents an
analysis for general convex-concave functions and studies the
evaluation error of the running time-averages, showing convergence to
an arbitrarily small neighborhood assuming boundedness of the
estimates. In~\cite{AN-AO:09-jota,AN-AO:10-siam}, the boundedness of
the estimates in the case of Lagrangians is achieved using a truncated
projection onto a closed set that preserves the optimal dual set,
which~\cite{JBHU-CL:93} shows to be bounded when the strong Slater
condition holds. This bound on the Lagrange multipliers depends on
global information and hence must be known beforehand.
% for its use in distributed implementations.

Dual decomposition methods for constrained optimization are the
melting pot where saddle-point approaches come together with methods
for parallelizing computations, like the alternating direction method
of multipliers~\cite{SB-NP-EC-BP-JE:11}.  These methods rely on a
particular approach to split a sum of convex objectives by introducing
agreement constraints on copies of the primal variable, leading to distributed
strategies such as % distributed
% ADMM~\cite{EW-AO:12}
%
% \margin{I prefer not to cite \cite{EW-AO:12}, Solmaz discovered an
% issue in the algorithm description that questions its distributed
% character}
%
distributed primal-dual subgradient
methods~\cite{JW-NE:11,BG-JC:14-tac} {\color{black} where the vector of
  Lagrange multipliers associated with the Laplacian's nullspace is
  updated by the agents using local communication.}  Ultimately, these
methods allow to distribute global constraints that are sums of convex
functions via \emph{agreement on the
  multipliers}~\cite{MB-GN-FA:14,THC-AN-AS:14,AS-HJR:15}.
% %
% \margind{Jorge, I did not know the
% works~\cite{MB-GN-FA:14,AS-HJR:15} but they also fundamentally rely
% on the concept of agreement on the multiplier. The references are
% repeated again below with more context}
%
% There is evidence in previous works, starting with~\cite{AN-AO:09},
% that an agreement constraint can be handled in a distributed fashion
% through linear averaging even in the presence of local convex
% objectives (as long as the objective functions have bounded
% subgradients), or in the presence of projections onto different
% local convex sets~\cite{AN-AO-PAP:08}.
%%%Work~\cite{AN-AO-PAP:08} includes local projections onto different 
%%%sets of the local 
%%%variables subject to agreement, guaranteeing the convergence to the 
%%%intersection of the 
%%%local constraints.
% Moreover, these scenarios were analyzed even under joint
% connectivity over time of directed communication graphs (as long as
% they are weight-balanced), c.f., see~\cite{AN-AO:10}. This
% originated a fertile research field where optimization strategies
% such as subgradient methods, proximal methods, alternating direction
% method of multipliers, dual averaging, continuous time algorithms,
% etc., where combined with different consensus strategies such as
% gossip (for asynchronous communication), push-sum (for directed
% graphs that are not weight-balanced), and others.
%% %  The original framework has been also carried out to online
% %%optimization~\cite{DMN-JC:14-tnse}
%
Regarding distributed constrained optimization, we highlight two
categories of constraints that determine the technical analysis and
the applications: 
%
%the first type concerns a \emph{global} decision
%vector in which agents need to agree, see,
%e.g.,~\cite{DY-SX-HZ:11,MZ-SM:12,DY-DWCH-SX:15}, where all the agents
%know the constraint, or the second type, where the constraint is given
%by the intersection of abstract closed convex sets, coupling the
%\emph{local} decision vectors across the network,
%see~e.g.,~\cite{AN-AO-PAP:10,IN-ID-JAKS:10,MZ-SM:12}. {\color{black}
%  This type is addressed by~\cite{DMA-TR-DS:10} with linear equality
%  constraints, by~\cite{MB-GN-FA:14} with linear inequalities,
%  by~\cite{THC-AN-AS:14} with inequalities given by the sum of convex
%  functions on local decision vectors, where each one is only known to
%  the corresponding agent, and by~\cite{AS-HJR:15} which also adds
%  semidefinite constraints.} 
%
%
the first type concerns a \emph{global} decision vector in which
agents need to agree, see,
e.g.,~\cite{DY-SX-HZ:11,MZ-SM:12,DY-DWCH-SX:15}, where all the agents
know the constraint, or see,
e.g.,~\cite{AN-AO-PAP:10,IN-ID-JAKS:10,MZ-SM:12}, where the constraint
is given by the intersection of abstract closed convex sets. The
second type \emph{couples} \emph{local} decision vectors across the
network, {\color{black} and is addressed by~\cite{DMA-TR-DS:10} with
  linear equality constraints, by~\cite{MB-GN-FA:14} with linear
  inequalities, by~\cite{THC-AN-AS:14} with inequalities given by the
  sum of convex functions on local decision vectors, where each one is
  only known to the corresponding agent, and by~\cite{AS-HJR:15} with
  semidefinite constraints.  }
The work~\cite{DMA-TR-DS:10} considers a distinction, that we also
adopt here, between \emph{constraint graph} {\color{black} (where edges
  arise from participation in a constraint)} and \emph{communication
  graph},
% imposing the algorithm to be distributed with respect to the
% communication graph,
generalizing other paradigms where each agent needs to communicate
with all other agents involved in a particular
constraint~\cite{GN-FB:11,DR-JC:15-tac}.
%
%This type is addressed by~\cite{THC-AN-AS:14,AS-HJR:15}, where
%the inequality constraint is a sum of convex functions, and each one is
%only known to the corresponding agent. Pioneering works in this type of constraint
% include~\cite{DMA-TR-DS:10,MB-GN-FA:14} which consider linear equality and inequality constraints, 
%respectively, the former coining the distinction
%between \emph{constraint graph} and \emph{communication graph}. 
%In this case, the
%algorithm is distributed with respect to the communication graph,
%generalizing other paradigms where each agent needs to communicate
%with all other agents involved in a particular
%constraint~\cite{GN-FB:11,DR-JC:15-tac}. 
{\color{black} When applied to distributed optimization, our work
  considers both kinds of constraints, and along
  with~\cite{DMA-TR-DS:10,MB-GN-FA:14,THC-AN-AS:14,AS-HJR:15}, has the
  crucial feature that agents participating in the same constraint are
  able to coordinate their decisions without direct
  communication. This approach has been successfully applied to control
  of camera networks~\cite{AAM-CD-AKRC-JAF:14} and decomposable
  semidefinite programs~\cite{AK-JL:15}.
}
% with applications to convex relaxations of optimal power control
% in the electric grid,
% 
% 
% and quality-of-service in wireless networks~\cite{AZ-EJ:14}, among
% other problems.  }
%% 
% Employing the dual decomposition methods discussed above,
% {\color{black}\emph{the particularization} of our work to distributed
% constrained optimization} addresses a combination of the two types
% of constraints, {\color{black} \emph{local} and \emph{global}, using
% a subgradient approach alternative to the recent
% works~\cite{THC-AN-AS:14,MB-GN-FA:14,AS-HJR:15}.  which consider
% \emph{convex} and \emph{linear} constraints, respectively.  }
This is possible using a strategy that allows an agreement condition
to play an independent role on a subset of \emph{both} primal and dual
variables.  {\color{black} Our novel contribution tackles} these
constraints from a more general perspective, namely, we provide a
multi-agent distributed approach for the \emph{general saddle-point
  problems} under an additional agreement condition on a subset of the
variables of both the convex and concave parts.  We do this by
combining the saddle-point subgradient methods
in~\cite[Sec. 3]{AN-AO:09-jota}
% {\color{black} which are suitable for large-scale problems, }
and the kind of linear proportional feedback on the disagreement
typical of consensus-based approaches, see
e.g.,~\cite{BJ-TK-MJ-KHJ:08,AN-AO:09,MZ-SM:12}, in distributed convex
optimization.  The resulting family of algorithms {\color{black} solve
  more general saddle-point problems than existing algorithms in the
  literature in a decentralized way, and also } particularize to a
novel class of primal-dual consensus-based subgradient methods when
\emph{the convex-concave} function is the \emph{Lagrangian} of the
minimization of a sum of convex functions under a constraint of the
same form.  In this particular case, the recent
work~\cite{THC-AN-AS:14}
%convex-concave functions arising from Lagrangians 
uses primal-dual perturbed methods which enhance subgradient
algorithms by evaluating the latter at precomputed arguments called
\emph{perturbation points}.
% distributed optimization problems under global constraints that are
% a sum of convex functions.  These constraints cannot be handled by
% local projections but can be tackled using dual decomposition and
% consensus.
%which require the \emph{extra updates} of the \emph{perturbation points} to
%guarantee asymptotic convergence of the running time-averages to a
%saddle point.  
%%
%\margind{The perturbation points are a generalization of subgradient methods, although the proof method wouldn't extend to the Doubling Trick}
%%
These auxiliary computations require additional subgradient methods or proximal
methods that add to the computation and the communication complexity.
{\color{black} Similarly, the work~\cite{AS-HJR:15} considers
  primal-dual methods, where each agent performs a minimization of the
  local component of the Lagrangian with respect to its primal
  variable (instead of computing a subgradient step).
  % and averages with its neighbors the subgradient update of its
  % local copy of the multiplier associated with its participation in
  % the global constraint.
  Notably, this work makes explicit the treatment of semidefinite
  constraints.
  % of these so called \emph{almost separable optimization problems},
  The work~\cite{MB-GN-FA:14} applies the \emph{Cutting-Plane
    Consensus} algorithm to the dual optimization problem under linear
  constraints. The decentralization feature is the same but the
  computational complexity of the local problems grows with the number
  of agents. }
% The fact that our saddle-point strategy goes beyond the case of
% Lagrangians in constrained optimization can be justified in two
% important classes of problems: the first is distributed optimization
% with nuclear norm regularization, which the authors address
% in~\cite{DMN-JC:15-necsys} via a min-max formulation.  The second
% class is given by semidefinite programming, where our algorithms
% provide a multi-agent strategy in two scenarios, when the
% semidefinite constraint is of smaller order than the size of the
% network, and when the data of the semidefinite constraint can be
% distributed among cliques under chordal sparsity~\cite{AK-JL:15}.
%
{\color{black} The generality of our approach stems from the fact that}
our saddle-point strategy is applicable beyond the case of Lagrangians
in constrained optimization. In fact, we have recently considered
in~\cite{DMN-JC:15-necsys} distributed optimization problems with
nuclear norm regularization via a min-max formulation of the nuclear
norm where the convex-concave functions involved have, unlike
Lagrangians, a quadratic concave part.

\subsubsection*{Statement of contributions}

We consider general saddle-point problems with explicit agreement
constraints on a subset of the arguments of both the convex and
concave parts.  These problems appear in dual decompositions of
constrained optimization problems, 
%including semidefinite constraints, as well as 
and in other saddle-point problems where the convex-concave functions,
unlike Lagrangians, are not necessarily linear in the arguments of the
concave part.  {\color{black} This is a substantial improvement
  % \margind{Jorge, there is more than "handling nonlinear
  % constraints"
  % (and a couple of papers do that although they do not simulate
  % these cases). I wanted to explain here that even for nonlinear
  % constraints the Lagrangian is linear on the multipliers while our
  % approach allows us to handle more general convex concave
  % functions. Do you want to reword that?}
  over prior work that only focuses on dual decompositions of
  constrained optimization. }
% as in min-max characterizations of the nuclear norm that are
% amenable for our distributed strategy.
When considering constrained optimization problems, the agreement
constraints are introduced as an artifact to distribute both primal
and dual variables independently.  For instance, separable constraints
can be decomposed using agreement on dual variables, while a subset of
the primal variables can still be subject to agreement or eliminated
through Fenchel conjugation; local constraints can be handled through
projections; and part of the objective can be expressed as a
maximization problem in extra variables.  Driven by these important
classes of problems, {\color{black} our main contribution is } the
design and analysis of distributed coordination algorithms to solve
{\color{black} general convex-concave saddle-point problems } with agreement
constraints, {\color{black} and to do so with subgradient methods,
  which have less computationally complexity.} The coordination
algorithms that we study can be described as projected saddle-point
subgradient methods with Laplacian averaging, which naturally lend
themselves to distributed implementation.  For these algorithms we
characterize the asymptotic convergence properties in terms of the
network topology and the problem data, and provide the convergence
rate.  The technical analysis entails computing bounds on the
saddle-point evaluation error in terms of the disagreement, the size
of the subgradients, the size of the states of the dynamics, and the
subgradient stepsizes.  Finally, under assumptions on the boundedness
of the estimates and the subgradients, we further bound the cumulative
disagreement under joint connectivity of the communication graphs,
regardless of the interleaved projections, and make a choice of
decreasing stepsizes that guarantees convergence of the evaluation
error as~$1/\sqrt{t}$, where $t$ is the iteration step.  We
particularize our results to the case of distributed constrained
optimization with objectives and constraints that are a sum of convex
functions coupling local decision vectors across a network.  For this
class of problems, we also present a distributed strategy that lets
the agents compute a bound on the optimal dual set.  This bound
enables agents to project the estimates of the multipliers onto a
compact set (thus guaranteeing the boundedness of the states and
subgradients of the resulting primal-dual projected subgradient
dynamics) in a way that preserves the optimal dual set.  Various
simulations illustrate our results.
%
% Our strategy selects the running time-average of the local estimates
% to convergence to a saddle-point with evaluation error~
% $1/\sqrt{t}$.

% We show a method for distributing constraints that are a sum of
% convex functions or semidefinite constraints that are global via
% agreement on copies of the associated dual variables, while
% retaining, for instance, local constraints in their primal form.  In
% this way, we rely on the max-min property (whereby a saddle-point
% exists that solves the optimization problem), in place of Lagrangian
% duality.
%
% We therefore open the road to more flexible algorithms that allow to
% merge the power of subgradient methods, local projections, and
% linear averaging on a subset of the arguments corresponding to both
% the convex and the concave part of the objective.
%
% To aid the design of other formulations, we contextualize these
% ideas in the landscape of the traditional method of introducing
% copies of the primal variables subject to agreement, which is a
% method that could be applied to the Lagrangian dual problem.  We
% note that our approach differs from this method when some of the
% constraints are retained in the primal form, and we also explain the
% option of eliminating altogether the primal variables and recovering
% them in a distributed way or retaining them and using saddle-point
% methods.

\section{Preliminaries}\label{sec:preliminaries-multitask}

Here we introduce basic notation and notions from graph theory and
optimization used throughout the paper.

\subsection{Notational conventions}

We denote by~$\real^n$ the $n$-dimensional Euclidean space, by
$\identity_n\in\realmatricesarg{n}$ the identity matrix in~$\real^n$,
and by~$\ones_n \in \real^n$ the vector of all ones.  Given two
vectors, $u$, $v\in\real^n$, we denote by $u\ge v$ the entry-wise set
of inequalities $u_i\ge v_i$, for each $i=1,\dots, n$.  Given a vector
$v\in\real^n$, we denote its Euclidean norm, or two-norm, by
$\norm{v}=\sqrt{\sum_{i=1}^n v_i^2}$ and the one-norm by
$\normone{v}=\sum_{i=1}^n \absolute{v_i}$.  Given a convex set
$\mathcal{S}\subseteq\real^n$, a function
$\map{f}{\mathcal{S}}{\real}$ is convex if $f(\alpha x + (1-\alpha) y)
\le \alpha f(x) + (1-\alpha) f(y)$ for all $\alpha \in [0,1]$ and $x,y
\in \mathcal{S}$.  A vector $\xi_{x}\in\real^n$ is a subgradient of
$f$ at $x\in\mathcal{S}$ if $ f(y)-f(x)\ge\xi_{x}\tp(y-x)$, for all
$y\in\mathcal{S}$.  We denote by $\partial f(x)$ the set of all such
subgradients. The function $f$ is concave if $-f$ is convex.  A vector
$\xi_{x}\in\real^n$ is a subgradient of a concave function $f$ at
$x\in\mathcal{S}$ if $-\xi_{x} \in \partial (-f)(x)$.  Given a \emph{closed}
convex set~$\mathcal{S}\subseteq\real^n$, the orthogonal projection
$\mathcal{P}_{\mathcal{S}}$ onto~$\mathcal{S}$ is
\begin{align}\label{eq:def-projection}
  \projec{\mathcal{S}}{x}\in\arg\min_{x'\in \mathcal{S}} \norm{x-x'} .
\end{align} {\color{black} This value exists and is unique. (Note that
  \emph{compactness} could be assumed without loss of generality
  taking the intersection of $\mathcal{S}$ with balls centered at
  $x$.)  We use the following basic property of the orthogonal
  projection: for every $x\in\mathcal{S}$ and $x'\in\real^n$,
  % the following two inequalities hold,
  % \begin{subequations}
  \begin{align}\label{eq:prelims-projection-property}
    \big(\projec{\mathcal{S}}{x'}-x'\big)(x'-x)\le&\, 0 .%\label{eq:prelims-projection-property}
    %	\\
    % the following is immediate from the definition
    %	\norm{\projec{\mathcal{S}}{x'}-x'}\le&\,\norm{x-x'} .\label{eq:prelims-projection-property-second}
  \end{align}
}
% \end{subequations}
For a symmetric matrix $A\in\real^{n\times n}$, we denote by
$\lambdamin(A)$ and $\lambdamax(A)$ its minimum and maximum
eigenvalues, {\color{black} and for any matrix $A$, we denote by
  $\sigmamax(A)$ its maximum singular value.  } We use $\otimes$ to
denote the Kronecker product of matrices.
%% ONLY if we include the subsection with semidefinite constraints.
%
% \margind{Jorge, commented out becasue there is a good reason to not
% include that subsection. See margin in the abstract}
%
% If $A$ is symmetric positive semidefinite, we denote it by $A\succeq
% 0$.  We let~$\psdarg{n}$ denote the set of all symmetric positive
% semidefinite matrices.

\subsection{Graph theory}

We review basic notions from graph theory
following~\cite{FB-JC-SM:08cor}.  A (weighted) digraph $\graph \defin
(\vertexset,\edgeset,\Adj)$ is a triplet where $\vertexset \defin
\until{N}$ is the vertex set, $\edgeset \subseteq \vertexset \times
\vertexset$ is the edge set, and $\Adj \in \realnonnegmatricesarg{N}$
is the weighted adjacency matrix with the property that
$\adj_{ij}\defin A_{ij}>0$ if and only if $(i,j) \in \edgeset$.  The
complete graph is the digraph with edge set
%$\edgeset_{\mathcal{K}} = $
$\vertexset \times \vertexset$.
% and the undirected star graph centered at vertex $j$ is the digraph
% with edge set $\setdef{(j,k),(k,j)\in\edgeset\times\edgeset
% }{k\in\vertexset\setminus\{j\}}$.
Given $\graph_1 = (\vertexset,\edgeset_1,\Adj_1)$ and $\graph_2 =
(\vertexset,\edgeset_2,\Adj_2)$, their union is the digraph $\graph_1
\cup \graph_2 = (\vertexset,\edgeset_1 \cup \edgeset_2 ,\Adj_1+
\Adj_2) $.  A path is an ordered sequence of vertices such that any
pair of vertices appearing consecutively is an edge.  A digraph is
strongly connected if there is a path between any pair of distinct
vertices.  A sequence of digraphs $\big\{\graph_t\defin
(\vertexset,\edgeset_t, \Adj_t)\big\}_{t\ge 1}$ is
$\degn$-nondegenerate, for $\degn\in\realpositive$, if the weights are
uniformly bounded away from zero by $\degn$ whenever positive, i.e.,
for each $t\in\integerspositive$, $ \adj_{ij,t}\defin(\Adj_t)_{ij}
>\degn$ whenever ${\adj_{ij,t}}>0$.  A sequence $\{\graph_t\}_{t\ge1}$
is $B$-jointly connected, for $B\in\integerspositive$, if for each
$k\in\integerspositive$, the digraph $\graph_{kB} \cup \dots \cup
\graph_{(k+1)B-1}$ is strongly connected.  The Laplacian matrix
$\lap\in\realmatricesarg{N}$ of a digraph $\graph$ is $\lap \defin
\diag(\Adj\ones_N)-\Adj$. Note that $\lap \ones_N = \zeros{N}$.  The
weighted out-degree and in-degree of $i \in \vertexset$ are,
respectively, $ \dout(i) \defin\sum_{j=1}^{N}\adj_{ij} $ and
$\din(i)\defin\sum_{j=1}^N \adj_{ji} $.  A digraph is weight-balanced
if $\dout(i) =\din(i) $ for all $ i \in \vertexset$, that is,
$\ones_N\tp \lap = \zeros{N}$.
% which is also equivalent to the condition of $\lap + \lap\tp$ being
% positive semidefinite.
%For convenience, we let $\lapk$ denote the Laplacian of the complete
%graph with edge weights~$1/N$, i.e., $\lapk \defin \identity_N -
%\matrixM$, where $\matrixM \defin\tfrac{1}{N} \ones_N
%\ones_N\tp$. 
For convenience, we let $\lapk \defin \identity_N -\tfrac{1}{N}
\ones_N \ones_N\tp$ denote the Laplacian of the complete graph with
edge weights~$1/N$. Note that $\lapk$ is idempotent, i.e.,
$\lapk^2=\lapk$.
% Also, we let $\lapstarj$ denote the Laplacian of the undirected star
% graph centered at $j$ with weights equal to $1/N$.
For the sake of the reader, Table~\ref{tab:matrices-definitions-saddle-methods} 
collects some shorthand notation.
%the shorthand notation combining Laplacian matrices and
%Kronecker products used in the paper.
\begin{table}[h]
  \centering
  \begin{tabular} {|c|c|c|}
    \hline
    $\matrixM=\tfrac{1}{N} \ones_N\ones_N\tp$
    & $\lapk=\identity_N -\matrixM$ 
    & $\tlap= \diag(\Adj_t\ones_N)-\Adj_t$ 
    \\
    \hline
    $\mb=\matrixM\otimes\identity_d$ 
    & $\lapkb=\lapk\otimes\identity_d$ 
    & $\tlapb= \tlap\otimes\identity_d$
    \\
    \hline
  \end{tabular}
  \caption{Notation for graph matrices
    employed along the paper, where the dimension $d$ depends on the context.
  }\label{tab:matrices-definitions-saddle-methods} 
\end{table}

\subsection{Optimization and saddle
  points}\label{sec:prelims-optimization}

For any function~$\map{\lag}{\wset\times\mathcal{M}}\real$, the
\emph{max-min inequality}~\cite[Sec 5.4.1]{SB-LV:09} states that
\begin{align}\label{eq:min-max-inequality}
  \inf_{w\in\wset}\sup_{\mu\in\mathcal{M}} \lag(w,\mu) \ge
  \sup_{\mu\in\mathcal{M}}\inf_{w\in\wset} \lag(w,\mu) .
\end{align}
When equality holds, we say that~$\lag$ satisfies the \emph{strong
  max-min property} (also called the \emph{saddle-point} property).  A point
$(w^*,\mu^*)\in\wset\times\mathcal{M}$ is called a \emph{saddle point}
if
\begin{align*} {w}^*=\inf_{w\in\wset}\lag(w,{\mu}^*) \; \text{and} \;
  {\mu}^*=\sup_{\mu\in\mathcal{M}}\lag({w}^*,\mu) .
\end{align*}
\cite[Sec. 2.6]{DPB-AN-AEO:03} discusses sufficient conditions to
guarantee the existence of saddle points. Note that the existence of
saddle points implies the strong max-min property.
  Given functions
$\map{f}{\real^n}{\real}$, $\map{g}{\real^{m}}{\real}$ and
$\map{h}{\real^{p}}{\real}$, the \emph{Lagrangian} for the problem
\begin{align}\label{eq:prelimns-def-optimization-problem}
  \min_{w\in\real^n} f(w) \quad \text{s.t.}\quad g(w)\le 0,\; h(w)= 0,
\end{align}
is defined as 
%$\lag(w,\mu,\lambda)=f(w)+{\mu}\tp g(w)+{\lambda}\tp h(w)$ 
\begin{align}\label{eq:prelimns-Lagrangian-def}
\lag(w,\mu,\lambda)=f(w)+{\mu}\tp g(w)+{\lambda}\tp h(w)
\end{align}
for $(\mu,\lambda)\in\realnonnegative^{m}\times\real^{p}$.
% When $\lag$ is a Lagrangian (with $\wset=\real^n$ and appropriate
% $\mathcal{M}$),
In this case, inequality~\eqref{eq:min-max-inequality} is called
\emph{weak-duality}, and if equality holds, then we say
that~\emph{strong-duality} (or Lagrangian duality) holds.
% In this case, under a suitable constraint qualification, and
% suitable convexity properties, there are also sufficient conditions!
% called Karush-Kuhn-Tucker conditions.
%
% \margind{Jorge, the other implication of the saddle-point theorem
% needs a constraint qualification, but we do not need that
% implication}
% 
If a point $(w^*,\mu^*,\lambda^*)$ is a saddle point for the
Lagrangian, then $w^*$ solves the constrained minimization
problem~\eqref{eq:prelimns-def-optimization-problem} and
$(\mu^*,\lambda^*)$
% which are called \emph{Lagrange multipliers} (or \emph{optimal dual
% variables}),
solves the \emph{dual problem}, which is maximizing the \emph{dual
  function} $q(\mu,\lambda)\defin\inf_{w\in\real^n}
\lag(w,\mu,\lambda)$ over $\realnonnegative^{m}\times\real^{p}$.
{\color{black}
%% (the following is trivial from the saddle-point hypotheses) and the optimal values in the primal and dual problems coincide.
  This implication is part of the \textit{Saddle Point Theorem}. (The
  reverse implication establishes the existence of a saddle-point
  --and thus strong duality-- adding a \textit{constraint
    qualification} condition.)  Under the saddle-point condition, the
  \emph{optimal dual vectors} $(\mu^*,\lambda^*)$ coincide with the
  \emph{Lagrange multipliers}~\cite[Prop. 5.1.4]{DPB:99}.  }
% When there are only \emph{equality affine constraints}, i.e.,
% $g\equiv 0$ and $h(w)=Aw-b$, the dual function is
%\begin{align*}
%  \min_{w\in\real^n} \{ f(w)+\lambda\tp(Aw-b)=-f\fen(-A\tp \lambda)
%  \},
%\end{align*}
%%
%\margind{possibly not needed}
%%
In the case of affine linear constraints, the dual function can be
written using the \emph{Fenchel conjugate} of~$f$,
%given for $x\in\real^n$ as
defined in $\real^n$ as
%% defined as
%$f\fen(x)\defin\sup_{w\in\real^n} \{ x\tp w-f(w)\}$.
%% for $x\in\real^n$.  is called the~\emph{Fenchel conjugate} of $f$
\begin{align}\label{eq:prelims-Fenchel-def}
f\fen(x)\defin\sup_{w\in\real^n} \{ x\tp w-f(w)\} .
\end{align}

\section{Distributed algorithms for saddle-point problems under
  agreement constraints}

This section describes the problem of interest.  Consider closed
convex sets $\wnsetan\subseteq\realdimw$, $\dmset\subseteq\realdimd$,
$\munsetan\subseteq\realdimmu$, $\zset\subseteq\realdimz$ and a
function $\map{\phicc}{\wnsetan\times \dnmset\times\munsetan
  \times\znset}{\real}$ which is jointly convex on the first two
arguments and jointly concave on the last two arguments. We seek to
solve the constrained saddle-point problem:
\begin{align}\label{eq:min-max-convex-concave-function}
  \min_{\substack{\wn\in\wnsetan,\, \dnm\in\dnmset
      \\
      \dm^i=\dm^j,\,\forall i,j} }
  \max_{\substack{\mun\in\munsetan,\,\zn\in\znset
      \\
      z^i=z^j,\,\forall i,j} } \;\phicc(\wn,\dnm,\mun,\zn) ,
\end{align}
where $\dnm \defin (\dm^1,\dots, \dm^N)$ and $\zn \defin (z^1,\dots,
z^N)$.  The motivation for distributed algorithms and the
consideration of explicit agreement constraints
in~\eqref{eq:min-max-convex-concave-function} comes from {\color{black}
  decentralized or parallel computation approaches} in network
optimization and machine learning.  In such scenarios, global decision
variables, which {\color{black} need to be determined from the
  aggregation of local data}, can be duplicated into distinct ones so
that each agent has its own local version to operate with. Agreement
constraints are then imposed across the network to ensure the
equivalence to the original optimization problem.  We explain this
procedure next, specifically through the dual decomposition of
optimization problems where objectives and constraints are a sum of
convex functions.
% and also including semidefinite constraints.  and a min-max
% treatment of distributed optimization with nuclear norm
% regularization studied by the authors in~\cite{DMN-JC:15-necsys}.
% Then, in the next section we provide a distributed strategy for the
% general version of all these problems.

% One goal is then to \emph{parallelize} the computations by decomposing
% the problem and sharing information.  However, parallelizing the
% computations might require \emph{gathering} and \emph{broadcasting}
% large amounts of information, which originated another branch of ideas
% involving \emph{decentralizing} (instead of just parallelizing) the
% computations by using information \emph{locally} available as defined
% by neighborhood relationships among nodes in a communication graph.
% Next we present a template to formulate separable optimization
% problems in a distributed way.

% To be able to gather information from neighbors in a communication
% graph allows to use local averaging to achieve consensus on certain
% quantities or other kinds of distributed feedback.

% The saddle-point structure of the problem, on the other hand is
% motivated by the effectiveness of the Lagrangian to tackle
% constraint optimization problems and, in other scenarios
% {\color{red} other paper}, to rewrite functions as maximization
% problems in themselves in terms of additional variables.

\subsection{Optimization problems with separable
  constraints}\label{sec-linear-semidefinite-constraints-intro}

We illustrate here how optimization problems with constraints given by
a sum of convex functions can be reformulated in the
form~\eqref{eq:min-max-convex-concave-function} to make them amenable
to distributed algorithmic solutions.  Our focus are constraints
coupling the local decision vectors of agents that cannot communicate
directly.
%% I think this is ok for the answer to the reviewers
% {\color{black} Other applications besides the classical load
% balancing problem in networks of ``cloud servers''~\cite{RS-RS:09},
% include, for instance, the control of camera
% networks~\cite{AAM-CD-AKRC-JAF:14}, wherein cameras that cannot
% communicate directly may have nonetheless overlapping fields of
% vision, and their goal is to avoid blind spots by collaboratively
% controlling their pan, tilt, and zoom.  }

Consider a group of agents $\until{N}$, and let
$\map{f^i}{\real^{n_i}\times\realdimd}{\real}$ and the components of
$\map{g^i}{\real^{n_i}\times\realdimd}{\real^m}$ be convex functions
associated to agent $i\in\until{N}$. These functions depend on both a
local decision vector $w^i\in\wseti$, with
$\wseti\subseteq\real^{n_i}$ convex, and on a global decision
vector~$\dm\in\dmset$, with~$\dmset\subseteq\realdimd$ convex.  The
optimization problem reads as
\begin{align}\label{eq:problem-linear-inequality-constraints}
  \min_{\substack{w^i\in\wseti,\,\forall i
      \\
      \dm\in\dmset } }&\, \sum_{i=1}^N f^i(w^i, \dm) \notag
  \\
  \rm{s.t.}&\, g^1(w^1,\dm)+\dots+g^N(w^N,\dm)\le 0 .
\end{align}
This problem can be reformulated as a constrained saddle-point problem
as follows. We first construct the corresponding Lagrangian
function~\eqref{eq:prelimns-Lagrangian-def} and introduce copies
$\{z^i\}_{i=1}^N$ of the Lagrange multiplier $z$ associated to the
global constraint in~\eqref{eq:problem-linear-inequality-constraints},
then associate each $z^i$ to $g^i$, and impose the agreement
constraint $z^i=z^j$ for all $i$, $j$.  Similarly, we also introduce
copies $\{\dm^i\}_{i=1}^N$ of the global decision vector $\dm$ subject
to agreement, $\dm^i=\dm^j$ for all $i,j$.  The existence of a saddle
point implies that strong duality is attained and there exists a
solution of the
optimization~\eqref{eq:problem-linear-inequality-constraints}.
% %
% \margind{Jorge, I corrected this equivalence. Strong duality means
%   equality of cost functions interchanging the order, not existence of
%   the argument; hence all those apparently innecessary theorems in the
%   literature}
%
Formally,
\begin{subequations}\label{eq:separable-constraints-all}
  \begin{align}
    &\,\min_{\substack{
        w^i\in\wseti
        \\
        \dm\in\dmset}}\;\;\max_{z\in \realnonnegative^{m}} 
    \sum_{i=1}^N f^i(w^i,\dm)
    +z\tp \sum_{i=1}^Ng^{i}(w^i,\dm)
    % \label{eq:separable-constraints}
    \\
    % =&\,\min_{w^i\in\wseti,\,\forall i}\;\;\max_{z\in \realnonnegative^{m}}\;\; 
    % \sum_{i=1}^N f^i(w^i)
    % +\sum_{i=1}^N z\tp g^{i}(w^i)
    % \label{eq:separable-constraints-b}
    % \\
    =
    &\, \min_{
    \substack{
      w^i\in\wseti
      \\
      \dm\in\dmset}}\;\;
  \max_{\substack{z^i\in \realnonnegative^{m}\;\;
      \\
      z^i=z^j,\,\forall i,j
    }}\;\;
  \sum_{i=1}^N \big( f^i(w^i,\dm)
  + {z^i}\tp g^{i}(w^i,\dm) \big)
  \label{eq:separable-constraints-extra-dm}
  \\
  =
  &\, \min_{
    \substack{w^i\in\wseti
      \\
      \dm^i\in\dmset
      \\
      \dm^i=\dm^j,\,\forall i,j
    }}\;\;
  \max_{\substack{z^i\in \realnonnegative^{m}\;\;
      \\
      z^i=z^j,\,\forall i,j
    }}\;\;
  \sum_{i=1}^N \big( f^i(w^i,\dm^i)
  + {z^i}\tp g^{i}(w^i,\dm^i) \big) .
  \label{eq:separable-constraints-c}
\end{align}
\end{subequations}
This formulation has its roots in the classical dual decompositions
surveyed in~\cite[Ch. 2]{SB-NP-EC-BP-JE:11}, see
also~\cite[Sec. 1.2.3]{AN-AO:10} and~\cite[Sec. 5.4]{NP-SB:13} for the
particular case of resource allocation.
While~\cite{SB-NP-EC-BP-JE:11,AN-AO:10} suggest to broadcast a
centralized update of the multiplier, and the method
in~\cite{NP-SB:13} has an implicit projection onto the probability
simplex, the formulation~\eqref{eq:separable-constraints-all} has the
multiplier associated to the global constraint estimated in a
decentralized way.  The recent
works~\cite{MB-GN-FA:14,THC-AN-AS:14,AS-HJR:15} implicitly rest on the
above formulation of \emph{agreement on the multipliers} {\color{black}
  Section~\ref{sec:application-constrained-optimization}
  particularizes our general saddle-point strategy to these
  distributed scenarios.  }
% which opens the way to other novel distributed strategies for
% consensus-based constrained optimization.
%%
% We can transform any linear constraint coupling the local decision
% vectors across the network into an agreement constraint on copies of
% the multipliers, which is convenient because the latter can be
% written in terms of the Laplacian of a communication graph.  This
% strategy generalizes the class of convex optimization problems with
% linear constraints studied in~\cite{DMA-TR-DS:10}, which
% distinguishes between the \emph{constraint graph} and the
% \emph{network graph}, and defines \emph{distributed} with respect to
% the latter.

% as opposed to the general min-max problems presented above.

\begin{remark}\longthmtitle{Distributed formulations via Fenchel
    conjugates}\label{re:Fenchel}
  {\rm To illustrate the generality of the min-max
    problem~\eqref{eq:separable-constraints-c}, we show here how
    \emph{only} the particular case of \emph{linear} constraints can
    be reduced to a maximization problem under
    agreement. Consider the particular case of $\min_{w^i\in\real^{n_i}}
    \sum_{i=1}^N f^i(w^i)$, subject to a linear constraint
    \begin{align*}
      \sum_{i=1}^N A^i w^i-b\le 0 ,
    \end{align*}
    % In the particular case
    % \begin{align}\label{eq:problem-linear-equality-constraints-local-decisions}
    %   \min_{w^i\in\wseti,\,\forall i,
    % }&\, \sum_{i=1}^N f^i(w^i)
    %   \notag
    %   \\
    %   \rm{s.t.}&\, \sum_{i=1}^N A^i w^i-b\le 0 ,
    % \end{align}
    with $A^i\in\real^{m\times n_i}$ and $b\in\real^m$. The above
    formulation suggests a distributed strategy that \emph{eliminates}
    the primal variables using Fenchel conjugates~\eqref{eq:prelims-Fenchel-def}.
  % We show that the primal values can then be recovered
  % without extra computations in a distributed way. 
  % 
  % Suppose that $g^i(w^i)=A^iw^i-b^i$ and $\wseti=\real^{n_i}$ for each $i$, 
  % producing the linear inequality constraint $\sum_{i=1}^N A^i w^i-b\le 0$, 
  % where $b$ is split in some way $\{b^{i}\}_{i=1}^N$ 
  % such that $\sum_{i=1}^N b^{i}=b$.
  % 
  % (This can be done taking $b^i=b/N$ or having one agent, say the first, 
  % who knows~$b$, and write $b^{1}=b$ and $b^{i}=0$ for all $i\ge 2$.) 
  % % 
  % Problem~\eqref{eq:separable-constraints} can be transformed, 
  % if Lagrangian duality holds, into
  Taking~$\{b^{i}\}_{i=1}^N$ such that $\sum_{i=1}^N b^{i}=b$,
  % 
  % (This can be done taking $b^i=b/N$ or having one agent, say the first, 
  % who knows~$b$, and write $b^{1}=b$ and $b^{i}=0$ for all $i\ge 2$.) 
  this problem can be transformed, if a saddle-point exists (so that
  strong duality is attained), into
  %
  % \margind{Jorge, I prefer to say "if a saddle-point exists" rather
  %   than if "strong duality holds" because the saddle-point implies
  %   the second and it also implies that equality is "attained" (at
  %   some arguments).}
  %
  \begin{subequations}\label{eq:Fenchel-formulation-distributed}
    \begin{align}
      &\,\max_{z\in \zset} \min_{w^i\in\real^{n_i},\,\forall i}
      \sum_{i=1}^N f^i(w^i)+\sum_{i=1}^N (z\tp A^{i}w^i -z\tp b^i)
      \\
      =&\,\max_{z\in \zset}\sum_{i=1}^N \big( -{f^i}\fen( -{A^{i}}\tp
      z)-z\tp b^i\big)
      \label{eq:Fenchel-formulation-distributed-b} 
      \\
      =&\,\max_{\substack{z^i\in \zset,\,\forall i
          \\
          z^i=z^j,\,\forall i,j
        }}
      \sum_{i=1}^N \big(-{f^i}\fen( -{A^{i}}\tp z^i)-{z^i}\tp b^i\big) ,
      \label{eq:Fenchel-formulation-distributed-c} 
    \end{align}
  \end{subequations}
  where $\zset$ is either $\real^m$ or $\realnonnegative^m$ depending
  on whether we have equality or inequality ($\le$) constraints
  in~\eqref{eq:problem-linear-inequality-constraints}.  By~\cite[Prop.
  11.3]{RTR-RJBW:98}, the optimal primal values can be recovered
  locally as
  \begin{align}\label{eq:def-primal-recovery}
    {w^i}^*\defin\partial {f^i}\fen(-{A^i}\tp {z^i}^*) ,\qquad
    i\in\until{N}
  \end{align}
  without extra communication. {\color{black} Thus, our strategy
    generalizes the class of convex optimization problems with linear
    constraints studied in~\cite{DMA-TR-DS:10}, which distinguishes
    between the \emph{constraint graph} (where edges arise from
    participation in a constraint) and the \emph{network graph}, and
    defines \emph{distributed} with respect to the latter.  }}
\oprocend
\end{remark}

\subsection{Saddle-point dynamics with Laplacian averaging}

We propose a projected subgradient method to solve constrained
saddle-point problems of the
form~\eqref{eq:min-max-convex-concave-function}. The agreement
constraints are addressed via Laplacian averaging, allowing the design
of distributed algorithms \emph{when} the convex-concave functions are
separable as in
Sections~\ref{sec-linear-semidefinite-constraints-intro}.
% and~\ref{sec-semidefinite-constraints-intro}.
{\color{black} The generality of this dynamics is inherited by the
  general structure of the convex-concave min-max
  problem~\eqref{eq:min-max-convex-concave-function}. We have chosen
  this structure both for convenience of analysis, from the
  perspective of the saddle-point evaluation error, and, more
  importantly, because it allows to model problems beyond constrained
  optimization; see, e.g.,~\cite{GS-DPP-FF-JSP:10} regarding the
  variational inequality framework, which is equivalent to the
  saddle-point framework.
  % Consider a multi-agent system whose communication graph $\graph_t$
  % might change over time and define
  Formally, the dynamics is}
\begin{subequations}\label{eq:minmax-dyn-wnxnzn}
  \begin{align}
    \wntphat &=\wnt-\gradstept \subgwnt\label{eq:minmax-dyn-wn}
    \\
    \dnmtphat& =\dnmt-\cstep\tlapbk\dnmt-\gradstept
    \subgdnmt\label{eq:minmax-dyn-xn}
    \\
    \muntphat&= \munt+\gradstept \subgmunt\label{eq:minmax-dyn-mun}
    \\
    \zntphat &= \znt-\cstep\tlapb\znt+\gradstept
    \subgznt\label{eq:minmax-dyn-zn}
    \\
    (\wntp,\dnmtp,&\,\muntp,\zntp)=\projec{\cartnset}
    {\wntphat,\dnmtphat,\muntphat,\zntphat} ,\notag
  \end{align}
\end{subequations}
where $\tlapb=\tlap\otimes\identity_{\dimdn}$ or
$\tlapb=\tlap\otimes\identity_{\dimzn}$, depending on the context,
with $\tlap$ the Laplacian matrix of $\graph_t$;
$\cstep\in\realpositive$ is the consensus stepsize,
$\{\gradstept\}_{t\ge 1}\subset\realpositive$ are the learning rates;
\begin{align*}
  \subgwnt\in&\,\partial_{\wn}\phicc(\wnt,\dnmt,\munt,\znt) ,
  \\
  \subgdnmt\in&\,\partial_{\dnm}\phicc(\wnt,\dnmt,\munt,\znt) ,
  \\
  \subgmunt\in&\,\partial_{\mun}\phicc(\wnt,\dnmt,\munt,\znt) ,
  \\
  \subgznt\in&\,\partial_{\zn}\phicc(\wnt,\dnmt,\munt,\znt) ,
\end{align*}
% $\subgwnt\in\partial_{\wn}\phicc(\wnt,\dnmt,\munt,\znt)$,
% $\subgdnmt\in\partial_{\dnm}\phicc(\wnt,\dnmt,\munt,\znt)$,
% $\subgmunt\in\partial_{\mun}\phicc(\wnt,\dnmt,\munt,\znt)$
% $\subgznt\in\partial_{\zn}\phicc(\wnt,\dnmt,\munt,\znt)$
and %$\projec{\wnsetan\times\dnmset\times\znset}{\cdot}$
$\mathcal{P}_{\cartnset}$ represents the orthogonal projection onto
the closed convex set~$\cartnset\defin \wnsetan
\times\dnmset\times\munsetan\times\znset$ as defined
in~\eqref{eq:def-projection}.  This family of algorithms particularize
to a novel class of primal-dual consensus-based subgradient methods
\emph{when} the convex-concave function takes the Lagrangian form
discussed in Section~\ref{sec-linear-semidefinite-constraints-intro}.
{\color{black} 
%	Note that this dynamics goes beyond any specific
%  multi-agent model. 
%  
  In general, the dynamics~\eqref{eq:minmax-dyn-wnxnzn} goes beyond
  any specific multi-agent model.  However, when interpreted in this
  context, the Laplacian component corresponds to the model for the
  interaction among the agents.}
In the upcoming analysis, we make network considerations that affect
the evolution of~$\tlapbk\dnmt$ and~$\tlapb\znt$, {\color{black} which
  measure the disagreement among the corresponding components of
  $\dnmt$ and $\znt$ via the Laplacian of the time-dependent adjacency
  matrices.} % (regardless of whether the graphs are connected
% or only $B$-jointly connected).
% because these are defined in terms of the adjacency matrices of a
% sequence of graphs.
These quantities are amenable for distributed computation, i.e., the
computation of the $i$th block requires {\color{black} the blocks
  $\dm_t^j$ and $z_t^j$ of the } network variables corresponding to
indexes $j$ with $\adj_{ij,t}\defin(\Adj_t)_{ij} >0$.
% the distributed nature of the dynamics~\eqref{eq:minmax-dyn-wnxnzn}
% depends on the particular definition of the convex-concave function
% $\phicc$ in~\eqref{eq:min-max-convex-concave-function}.
On the other hand, whether the subgradients
in~\eqref{eq:minmax-dyn-wnxnzn} can be computed with \emph{local}
information \emph{depends} on the structure of the function $\phicc$
in~\eqref{eq:min-max-convex-concave-function} in the context of a given
networked problem.
Since this issue is anecdotal for our analysis, for the sake of
generality we consider a general convex-concave function~$\phicc$.

% and pay no regard to whether the remaining variables $\wnt$ and
% $\munt$ codify the estimates across a network of agents.

% The fact that one can produce \emph{implementations} of the above
% dynamics that lead to the distributed computation of these
% quantities (i.e., one in which the $i$th block requires the blocks
% of the variables corresponding to indexes $j$ with
% $\adj_{ij,t}\defin(\Adj_t)_{ij} >0$) is anecdotal for our analysis.
%%
%For this reason,
%% (For instance, if the unions of consecutive graphs over periods of
%% some length are strongly connected and other mild assumptions, then
%% we will be able to use asymptotic properties.)
%we pay no regard to whether the blocks of $\wnt$ and $\munt$ codify
%the estimates across a network of agents. In the same way, whether the
%subgradients in~\eqref{eq:minmax-dyn-wnxnzn} can be computed with
%\emph{local} information depends on the structure of the function
%$\phicc$ in~\eqref{eq:min-max-convex-concave-function}.

%In the particular applications in
% Section~\ref{sec-linear-semidefinite-constraints-intro} it turns out
% that we were able to write the convex-concave function~$\phicc$,
% which in that case was the Lagrangian of the constrained
% optimization) in such way that the
% dynamics~\eqref{eq:minmax-dyn-wnxnzn} has an implementation where
% agents can maintain an estimate of the Lagrange multiplier
% associated with the global constraint that couples their local
% decision variables. Other applications in which $\phicc$ is not a
% Lagrangian can be seen {\color{red}in~\cite{DMN-JC:15-cdc2}}.

\section{Convergence analysis}\label{sec:convergence-analysis}

Here we present our technical analysis on the convergence properties
of the dynamics~\eqref{eq:minmax-dyn-wnxnzn}.  Our starting point is
the assumption that a solution
to~\eqref{eq:min-max-convex-concave-function} exists, namely, a saddle
point~$(\wnstarwo, \dnmstar, \munstar,\znstarwo)$ of~$\phicc$ on
$\cartnset \defin \wnsetan\times\dnmset\times\munsetan\times\znset$
under the agreement condition on $\dnmset$ and $\znset$. That is, with
$\dnmstar = \dm^*\otimes\ones_N$ and $\znstarwo = z^*\otimes\ones_N$
for some $(\dm^*,z^*) \in \dmset\times\zset$.  {\color{black} (We
  cannot actually conclude the feasibility property of the original
  problem \emph{from} the evolution of the estimates.)}
%
% \margin{(Ok)This is not fully accurate, no? It's not a saddle point
% on the whole space $\cartnset \defin
% \wnsetan\times\dnmset\times\munsetan\times\znset$, only on the
% subspace that imposes agreement on $\dm$ and $z$, no?}
%
We then study the evolution of the \emph{running time-averages}
{\color{black} (sometimes called \emph{ergodic sums}; see,
  e.g.,~\cite{AS-HJR:15}) }
\begin{alignat*}{2}%\label{eq:def-running-time-averages}
  \wntpav & =\frac{1}{t}\sum_{s=1}^{t} \wns , & \quad \dnmtpav&
  =\frac{1}{t}\sum_{s=1}^{t} \dnms ,
  \\
  \muntpav & =\frac{1}{t}\sum_{s=1}^{t} \muns , & \quad \zntpav &
  =\frac{1}{t}\sum_{s=1}^{t} \zns .
\end{alignat*}

We summarize next our overall strategy to provide the reader with
a \emph{roadmap} of the forthcoming analysis.  In
Section~\ref{sec:saddle-point evaluation error}, we bound the
saddle-point evaluation error
\begin{align}\label{eq:saddle-point-error-sketch}
  t\phicc(\wntpav,\dnmtpav,\muntpav,\zntpav)
  -t\phicc(\wnstarwo,\dnmstar,\munstar,\znstarwo) .
\end{align}
in terms of the following quantities: the initial conditions, the size
of the states of the dynamics, the size of the subgradients, and the
cumulative disagreement of the running time-averages.  Then, in
Section~\ref{sec:cum-disagreement} we bound the cumulative
disagreement in terms of the size of the subgradients and the learning
rates.  Finally, in
Section~\ref{sec:proof-main-result-saddle-partial-agreement} we
establish the saddle-point evaluation convergence result using the
assumption that the estimates generated by the
dynamics~\eqref{eq:minmax-dyn-wnxnzn}, as well as the subgradient
sets, are uniformly bounded.  (This assumption can be met in
applications by designing projections that preserve the saddle points,
particularly in the case of distributed constrained optimization that
we discuss later.)  In our analysis, we conveniently choose the
learning rates~$\{\gradstept\}_{t\ge 1}$ using the Doubling Trick
scheme~\cite[Sec. 2.3.1]{SSS:12} to find lower and upper bounds
on~\eqref{eq:saddle-point-error-sketch} proportional to~$\sqrt{t}$.
Dividing by~$t$ finally allows us to conclude that the saddle-point
evaluation error of the running time-averages is bounded
by~${1}/{\sqrt{t}}$.

\subsection{Saddle-point evaluation error in terms of the
  disagreement}\label{sec:saddle-point evaluation error}

Here, we establish the saddle-point evaluation error of the running
time-averages in terms of the disagreement.  Our first result, whose
proof is presented in the Appendix, establishes a pair of inequalities
regarding the evaluation error of the states of the dynamics with
respect to a generic point in the variables of the convex and concave
parts, respectively.

% This section extends the framework of~\cite[Sec. 3]{AN-AO:09-jota}
% to the class of scenarios in~\cite{AN-AO:09}.  Our dynamics itself
% has the form in~\cite[Sec. 3]{AN-AO:09-jota} but with the kind of
% linear proportional feedback on the disagreement used
% in~\cite{AN-AO:09}.
%%
% \margin{(moved to Intro) I think these comments probably belong to
% the intro, not here. Also, ``framework
% of~\cite[Sec. 3]{AN-AO:09-jota}'' on what?  ``Class of scenarios
% in~\cite{AN-AO:09}'' What scenarios are we referring to?}
%%

% Leveraging ideas from~\cite[Lemma 1]{AN-AO:09-jota} and~\cite[Lemma
% V.2]{DMN-JC:14-tnse}, we establish the following result.
%%
%%
% {\color{red} The treatment of local projections onto different sets
% regarding the local estimates subject to agreement, we use
% result~\cite[Lemma 7]{KS-AN:11}.}

\begin{lemma}\longthmtitle{Evaluation error of the states in terms of
    the
    disagreement}\label{le:basic-bound-convex-concave-function-differences}
  Let the sequence $\{(\wnt,\dnmt,\munt,\znt)\}_{t\ge 1}$ be generated
  by the coordination algorithm~\eqref{eq:minmax-dyn-wnxnzn} over a
  sequence of arbitrary weight-balanced
  digraphs~$\{\graph_t\}_{t\ge1}$ such that {\color{black} $\sup_{t\ge
      1} \sigmamax(\tlap)\le\lambdaup$,} and with
  % $\cstep<\frac{1}{\doutmax}$,
  % where
  \begin{align}\label{eq:upper-cons-cstep}
    % \doutmax \defin
    \cstep\le\big( \max\setdefbig{\doutt(k)}{k\in\vertexset,\:
      t\in\integerspositive} \big)^{-1} .
  \end{align}
  % \begin{align}\label{eq:cond-eigmax-laps}
  %   \sup_{t\ge 1} \lambdamax(\tlap)\le\lambdaup .
  % \end{align}
  Then, for any sequence of learning rates
  $\{\gradstept\}_{t\ge1}\subset\realpositive$ and any
  $(\wnp,\dnmp)\in\wnsetan\times\dnmset$,
  %% 
  % {\color{red} any $(\wnp,\dnmp)\in\wnsetan\times\dnmsetdifb$,}
  %% 
  the following holds:
  \begin{align}\label{eq:prop-difference-dnmt-xn}
    &\, 2(\phicc(\wnt,\dnmt,\munt,\znt)-\phicc(\wnp,\dnmp,\munt,\znt)) 
    \\
    \le&\, \tfrac{1}{\gradstept}
    \big(\norm{\wnt-\wnp}^2 -\norm{\wntp-\wnp}^2\big)
    \notag
    \\
    &\,+ \tfrac{1}{\gradstept}
    \big(\norm{\mbk\dnmt-\dnmp}^2 -\norm{\mbk\dnmtp-\dnmp}^2\big)
    \notag
    \\
    % &\,-\pxw(\wnp,\dnmp,\wnt,\dnmt)
    % \big(\norm{\wnt-\wnp}^2+\norm{\mbk\dnmt-\dnmp}^2\big)
    % \notag
    % \\
    &\,+6 \gradstept \norm{\subgwnt}^2+6\gradstept \norm{\subgdnmt}^2
    \notag
    \\
    &\, 
    +2\norm{\subgdnmt}(2+\cstep\lambdaup)\norm{\lapkbk\dnmt}
    +2\norm{\subgdnmt}\norm{\lapkbk\dnmp} .
    \notag         
  \end{align}
  % {\color{red} $\dnmp\in\setdefbig{\dnm\in\real^{d^2\times
  % N}}{\dnm\in\dnmset,\, \lapb\dnm=0 }$.}
  Also, for any $(\munp, \znp)\in\munsetan\times\znset$, the analogous holds,
  % {\color{red}inequality also holds:}
  \begin{align}\label{eq:prop-difference-znt-zn}
    &\, 2(\phicc(\wnt,\dnmt,\munt,\znt)-\phicc(\wnt,\dnmt,\munp,\znp))
    \\
    % \ge &\, -\tfrac{1}{\gradstept}
    % \big(\norm{\mb\znt-\znp}^2 -\norm{\mb\zntp-\znp}^2\big)
    % \notag
    % \\
    % &\,+\pz(\znp,\znt)\norm{\znt-\znp}^2 
    % \notag
    % \\
    % &\,-  \tfrac{1}{\gradstept}\norm{-\gradstept \mb\subgznt+\mb\rnztp}^2
    % -\tfrac{2}{\gradstept}\rnztp\tp(\mb\znt-\znp),
    % \notag
    % \\
    \ge &\, -\tfrac{1}{\gradstept}
    \big(\norm{\munt-\munp}^2 -\norm{\muntp-\munp}^2\big)
    \notag
    \\
    &\,-\tfrac{1}{\gradstept}
    \big(\norm{\mb\znt-\znp}^2 -\norm{\mb\zntp-\znp}^2\big)
    \notag
    \\
    % &\,+\pz(\znp,\znt)\norm{\znt-\znp}^2 
    % \notag
    % \\
    &\,-6 \gradstept \norm{\subgmunt}^2-   6\gradstept \norm{\subgznt}^2
    \notag
    \\
    &\, -2\norm{\subgznt}(2+\cstep\lambdaup)\norm{\lapkb\znt}
    -2\norm{\subgznt}\norm{\lapkb\znp} .
    \notag              
  \end{align}
  % {\color{red}
  % $\znp\in\setdefbig{\zn\in\real^{d\times N}}{\zn\in\znset,\,
  % \lapb\zn=0 }$.}
\end{lemma}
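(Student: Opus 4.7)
The plan is to start from the subgradient characterization of convexity of $\phicc$ in its first two arguments: for the fixed pair $(\munt,\znt)$,
\begin{align*}
\phicc(\wnt,\dnmt,\munt,\znt) - \phicc(\wnp,\dnmp,\munt,\znt) \le \subgwnt\tp(\wnt-\wnp) + \subgdnmt\tp(\dnmt-\dnmp) ,
\end{align*}
which reduces the task to controlling each of the two inner products using the projected subgradient updates. The $\wn$-term yields readily to the standard projected subgradient argument: since $\wnp\in\wnsetan$, non-expansiveness of $\projec{\wnsetan}{\cdot}$ applied to $\wntp=\projec{\wnsetan}{\wnt-\gradstept\subgwnt}$, followed by expansion of $\norm{\wntphat-\wnp}^2$, delivers $2\gradstept\subgwnt\tp(\wnt-\wnp)\le \norm{\wnt-\wnp}^2-\norm{\wntp-\wnp}^2+\gradstept^2\norm{\subgwnt}^2$, and the $\tfrac{1}{\gradstept}$ contribution together with the (loose) coefficient $6\gradstept$ on $\norm{\subgwnt}^2$ in the statement then follow after dividing by $\gradstept$ and allowing slack.

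The delicate step is the $\dnm$-term, which I would handle via the orthogonal decomposition $\identity=\mbk+\lapkbk$ applied from the outset. Writing
\begin{align*}
\subgdnmt\tp(\dnmt-\dnmp) = \subgdnmt\tp\lapkbk\dnmt + \subgdnmt\tp(\mbk\dnmt-\mbk\dnmp) - \subgdnmt\tp\lapkbk\dnmp ,
\end{align*}
Cauchy--Schwarz on the first and third pieces accounts directly for the $\norm{\subgdnmt}\norm{\lapkbk\dnmt}$ and $\norm{\subgdnmt}\norm{\lapkbk\dnmp}$ contributions. For the agreement piece $\subgdnmt\tp(\mbk\dnmt-\mbk\dnmp)$, the key observation is that since $\dnmset=\dmset^N$ is the Cartesian product of a convex set, the agreement-averaged point $\mbk\dnmp$ also lies in $\dnmset$ whenever $\dnmp$ does. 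This allows me to invoke non-expansiveness of $\mathcal{P}_{\dnmset}$ at the auxiliary point $\mbk\dnmp$, namely $\norm{\dnmtp-\mbk\dnmp}^2\le\norm{\dnmtphat-\mbk\dnmp}^2$, and to decompose both sides orthogonally with respect to the agreement subspace (using $\lapkbk\mbk\dnmp=0$). Combined with the clean agreement-only update $\mbk\dnmtphat=\mbk\dnmt-\gradstept\mbk\subgdnmt$ (which follows from $\mbk\tlapbk=0$ on weight-balanced digraphs), standard expansion yields the $\tfrac{1}{\gradstept}\big(\norm{\mbk\dnmt-\dnmp}^2-\norm{\mbk\dnmtp-\dnmp}^2\big)$ contribution together with a residual $\norm{\lapkbk\dnmtphat}^2$ from the disagreement side of the decomposition. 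The latter is bounded using the contraction $\norm{(\identity-\cstep\tlapbk)\lapkbk\dnmt}\le\norm{\lapkbk\dnmt}$, which holds on the disagreement subspace under the stepsize condition~\eqref{eq:upper-cons-cstep} for weight-balanced digraphs with $\sigmamax(\tlap)\le\lambdaup$, together with the triangle inequality $\norm{\lapkbk\dnmtphat}\le(1+\cstep\lambdaup)\norm{\lapkbk\dnmt}+\gradstept\norm{\subgdnmt}$; the resulting cross-term of $(1+\cstep\lambdaup)\norm{\subgdnmt}\norm{\lapkbk\dnmt}$ combines with the direct $\norm{\subgdnmt}\norm{\lapkbk\dnmt}$ to produce the coefficient $(2+\cstep\lambdaup)$ in the statement, and the various $\gradstept\norm{\subgdnmt}^2$ remainders coming from $\gradstept^2\norm{\mbk\subgdnmt}^2$, $\gradstept^2\norm{\lapkbk\subgdnmt}^2$, and the associated cross terms combine loosely to give the factor~$6$.

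The second inequality~\eqref{eq:prop-difference-znt-zn} follows by applying the entirely parallel argument to the concave side. Concavity of $\phicc$ in $(\mun,\zn)$ delivers a subgradient inequality with the opposite sign, and the updates~\eqref{eq:minmax-dyn-mun}--\eqref{eq:minmax-dyn-zn} share the structure of~\eqref{eq:minmax-dyn-wn}--\eqref{eq:minmax-dyn-xn} (with $\mun$ playing the role of $\wn$ and $\zn$ the role of $\dnm$, modulo sign); in particular $\mbk\znp\in\znset$ by the same product-convexity argument, so the auxiliary-point projection trick carries over verbatim. The principal technical obstacle, common to both bounds, is the non-commutativity of the projection $\mathcal{P}_{\dnmset}$ (resp.\ $\mathcal{P}_{\znset}$) onto the product constraint set with the agreement projection $\mbk$. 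Circumventing this is precisely what motivates the choice of $\mbk\dnmp$ (resp.\ $\mbk\znp$) as reference point in the non-expansiveness inequality, which in turn exploits the Cartesian-product structure of the constraint sets in an essential way.
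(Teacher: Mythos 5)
Your treatment of the agreement component of the $\dnm$-update has a genuine gap. After invoking non-expansiveness at the auxiliary point $\mbk\dnmp$ and splitting both sides orthogonally, the inequality you actually obtain is
\begin{align*}
2\gradstept\,\subgdnmt\tp(\mbk\dnmt-\mbk\dnmp)\le \norm{\mbk\dnmt-\mbk\dnmp}^2-\norm{\mbk\dnmtp-\mbk\dnmp}^2+\gradstept^2\norm{\mbk\subgdnmt}^2+\norm{\lapkbk\dnmtphat}^2 ,
\end{align*}
and the residual $\norm{\lapkbk\dnmtphat}^2$ is of order $\norm{\lapkbk\dnmt}^2$, not of order $\gradstept$: squaring your bound $\norm{\lapkbk\dnmtphat}\le(1+\cstep\lambdaup)\norm{\lapkbk\dnmt}+\gradstept\norm{\subgdnmt}$ produces, besides the cross term you keep, the piece $(1+\cstep\lambdaup)^2\norm{\lapkbk\dnmt}^2$, which after dividing by $\gradstept$ becomes $\tfrac{(1+\cstep\lambdaup)^2}{\gradstept}\norm{\lapkbk\dnmt}^2$. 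This term does not appear in~\eqref{eq:prop-difference-dnmt-xn} and cannot be absorbed by the terms $\norm{\subgdnmt}\norm{\lapkbk\dnmt}$ or $\gradstept\norm{\subgdnmt}^2$ that do (let $\gradstept\to 0$ with a fixed nonzero disagreement: your right-hand side blows up while the one in the statement stays bounded). So, as written, the argument proves a strictly weaker inequality carrying an extra $\tfrac{1}{\gradstept}\norm{\lapkbk\dnmt}^2$ term, and the same defect propagates to your argument for~\eqref{eq:prop-difference-znt-zn}.

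The paper avoids this by never discarding the projection through non-expansiveness: it keeps the projection residual $\rnxtp=\dnmtp-\dnmtphat$ explicitly, observes that the stepsize condition~\eqref{eq:upper-cons-cstep} makes $\identity-\cstep\tlap$ stochastic so that $\dnmt-\cstep\tlapbk\dnmt\in\dnmset$ and hence $\norm{\rnxtp}\le\gradstept\norm{\subgdnmt}$, applies $\mbk$ to the full update, $\mbk\dnmtp=\mbk\dnmt-\gradstept\mbk\subgdnmt+\mbk\rnxtp$, and controls the only projection cross term $\rnxtp\tp(\mbk\dnmt-\mbk\dnmp)$ through the variational characterization~\eqref{eq:prelims-projection-property} (using $\mbk\dnmp\in\dnmset$, the same product-convexity fact you exploit). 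In this way every disagreement term inherits the factor $\gradstept\norm{\subgdnmt}$ and survives the division by $\gradstept$ only linearly, which is exactly what the statement requires. If you want to stay within your scheme, you would have to retain the term $-\norm{\lapkbk\dnmtp}^2$ you dropped on the left and compare it with $\norm{\lapkbk\dnmtphat}^2$ via $\norm{\lapkbk\dnmtphat-\lapkbk\dnmtp}\le\norm{\rnxtp}\le\gradstept\norm{\subgdnmt}$ --- but that is precisely the residual estimate your sketch omits, and it essentially reduces to the paper's argument.
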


Building on
Lemma~\ref{le:basic-bound-convex-concave-function-differences}, we
next obtain bounds for the sum over time of the evaluation errors with
respect to a generic point and the running time-averages.

\begin{lemma}\longthmtitle{Cumulative evaluation error of the states
    with respect to running time-averages in terms of
    disagreement}\label{le:cummulative-concave-function-differences}
  Under the same assumptions of
  Lemma~\ref{le:basic-bound-convex-concave-function-differences},
  % the following inequality holds
  for any $(\wnp,\dnmp,\munp,\znp)\in
  \wnsetan\times\dnmset\times\munsetan\times\znset$,
  %% 
  % {\color{red}for any $(\wnp,\dnmp,\munp,\znp)\in
  % \wnsetan\times\dnmsetdifb\times\munsetan\times\znset$,}
  %% 
  the difference
  \begin{align*} %\label{eq:lag-cumulative-upper-lower}
    \sum_{s=1}^{t}\phicc(\wns,\dnms,\muns,\zns) -
    t\phicc(\wnp,\dnmp,\muntpav,\zntpav)
  \end{align*}
  is upper-bounded by $\frac{\upperb(t,\wnp,\dnmp)}{2}$, while the
  difference
  \begin{align*}
    \sum_{s=1}^{t}\phicc(\wns,\dnms,\muns,\zns) -
    t\phicc(\wntpav,\dnmtpav,\munp,\znp) \notag
  \end{align*}
  is lower-bounded by $-\frac{\upperb(t,\munp,\znp)}{2}$,
  % \begin{align*} %\label{eq:lag-cumulative-upper-lower}
  %   \sum_{s=1}^{t}\phicc(\wns,\dnms,\muns,\zns)-t\phicc(\wnp,\dnmp,\muntpav,\zntpav)
  %   \le\frac{\upperb(t,\wnp,\dnmp)}{2}
  %   \\
  %%   \frac{\lowerb(t,\munp,\znp)}{2} \le
  %   -\frac{\upperb(t,\munp,\znp)}{2} \le
  %   \sum_{s=1}^{t}\phicc(\wns,\dnms,\muns,\zns)-t\phicc(\wntpav,\dnmtpav,\munp,\znp)\,,
  %   \notag
  % \end{align*}
  where
  % for any $\wnp\in\wnsetan$ and $\dnmp\in\dnmset$,
  \begin{align}\label{eq:cummulative-difference-dnmt-xn}
    &\,\upperb(t,\wnp,\dnmp) \equiv
    \upperb\big(t,\wnp,\dnmp,\{\wns\}_{s=1}^t,\{\dnms\}_{s=1}^t\big)
   %2\sum_{s=1}^{t}\phicc(\wns,\dnms,\zns)-2t\phicc(\wnp,\dnmp,\zntpav)\le
    \\
    =&\,\sum_{s=2}^{t} \big(\norm{\wns -\wnp}^2+\norm{\mbk\dnms
      -\dnmp}^2\big)
    \big(\tfrac{1}{\gradsteps}-\tfrac{1}{\gradstepsm}\big)
    % -\pxw(\wnp,\dnmp,\wns,\dnms)\big)
    \notag
    \\
    % &\,+\sum_{s=2}^{t}\! \norm{\mbk\dnms -\dnmp}^2
    % \big(\tfrac{1}{\gradsteps}-\tfrac{1}{\gradstepsm}\big)
    %% -\pxw(\wnp,\dnmp,\wns,\dnms)\big)
    % \nonumber
    % \\
    &\,+\tfrac{2}{\gradstep_{1}} \big(\norm{\wn_{1}}^2+\norm{\wnp}^2
    +\norm{\dnm_{1}}^2 +\norm{\dnmp}^2\big) \nonumber
    \\
    &\,+6\sum_{s=1}^{t} \gradsteps (\norm{\subgwns}^2+
    \norm{\subgdnms}^2) \nonumber
    \\
    +&\,2(2+\cstep\lambdaup) \sum_{s=1}^{t}
    \norm{\subgdnms}\norm{\lapkbk\dnms}
    +2\norm{\lapkbk\dnmp}\sum_{s=1}^{t} \norm{\subgdnms} ,
  \end{align}
  % $\upperb(t,\wnp,\dnmp)$
  and $\upperb(t,\munp,\znp)\equiv
  \upperb\big(t,\munp,\znp,\{\muns\}_{s=1}^t,\{\zns\}_{s=1}^t\big)$.
  % 
  % $\dnmp\in\setdefbig{\dnm\in\real^{d^2\times N}}{\dnm\in\dnmset,\, 
  % \lapb\dnm=0 }$.
  % and, for any $\munp\in\munsetan$ and $\znp\in\znset$,
  % \begin{align}\label{eq:cummulative-difference-znt-zn}
  %   \lowerb(t,\znp)
  %%%   &\, 
  %%%   2\sum_{s=1}^{t}\phicc(\wns,\dnms,\zns)-2t\phicc(\wntav,\dnmtav,\znp)\ge
  %%%   \\
  %   \defin  &\,-\sum_{s=2}^{t}\! 
  %   \norm{\mb\zns-\znp}^2\big(\tfrac{1}{\gradsteps}-\tfrac{1}{\gradstepsm}\big)
  %%%   -\pz(\znp,\zns)\big)
  %%%   \nonumber
  %%%   \\
  %%%   &\,
  %   \\ &\,-\tfrac{2}{\gradstep_{1}}(
  %   \norm{\zn_{1}}^2+\norm{\znp}^2)
  %   -6\sum_{s=1}^{t} \gradsteps \norm{\subgzns}^2
  %   \nonumber 
  %   \\
  %   -2(2+\cstep\lambdaup) \sum_{s=1}^{t}  &\,\norm{\subgzns}\norm{\lapkb\zns}
  %   -2\norm{\lapkb\znp}\sum_{s=1}^{t} \norm{\subgzns}\,.
  %   \nonumber
  % \end{align}
  % %% $\znp\in\setdefbig{\zn\in\real^{d\times N}}{\zn\in\znset,\,
  % %% \lapb\zn=0 }$.
\end{lemma}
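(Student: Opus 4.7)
The plan is to sum the per-step inequality~\eqref{eq:prop-difference-dnmt-xn} of Lemma~\ref{le:basic-bound-convex-concave-function-differences} (evaluated at the states $(\wns,\dnms,\muns,\zns)$ and fixed comparison point $(\wnp,\dnmp)$) over $s=1,\dots,t$, and then convert the pointwise comparison against $(\wnp,\dnmp,\muns,\zns)$ into a comparison against the running-time-averaged point $(\wnp,\dnmp,\muntpav,\zntpav)$ by exploiting concavity of $\phicc$ in its last two arguments. The symmetric argument using~\eqref{eq:prop-difference-znt-zn} and convexity of $\phicc$ in its first two arguments yields the lower bound.

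First I would handle the telescoping contributions by Abel summation by parts: for any sequence $\{a_s\}$,
\begin{align*}
 \sum_{s=1}^t \tfrac{1}{\gradsteps}(a_s-a_{s+1})
 = \tfrac{a_1}{\gradstep_1} - \tfrac{a_{t+1}}{\gradstep_t} + \sum_{s=2}^t a_s\Big(\tfrac{1}{\gradsteps}-\tfrac{1}{\gradstepsm}\Big) .
\end{align*}
Applied to $a_s = \norm{\wns-\wnp}^2$ and $a_s = \norm{\mbk\dnms-\dnmp}^2$, dropping the nonpositive boundary term $-\tfrac{a_{t+1}}{\gradstep_t}$, and bounding $a_1 \le 2\norm{\wn_1}^2 + 2\norm{\wnp}^2$ (respectively, $\le 2\norm{\dnm_1}^2 + 2\norm{\dnmp}^2$, using that $\mbk$ is an orthogonal projection so $\norm{\mbk\dnm_1}\le\norm{\dnm_1}$) by the triangle inequality, produces precisely the $\tfrac{2}{\gradstep_1}(\cdots)$ constant and the running coefficients $(\tfrac{1}{\gradsteps}-\tfrac{1}{\gradstepsm})$ appearing in~\eqref{eq:cummulative-difference-dnmt-xn}. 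The remaining four terms of~\eqref{eq:prop-difference-dnmt-xn} involving $\gradsteps\norm{\subgwns}^2$, $\gradsteps\norm{\subgdnms}^2$, $\norm{\subgdnms}\norm{\lapkbk\dnms}$, and $\norm{\subgdnms}\norm{\lapkbk\dnmp}$ sum directly to the last three contributions of $\upperb(t,\wnp,\dnmp)$.

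Next, the summed left-hand side is $2\sum_s\big(\phicc(\wns,\dnms,\muns,\zns)-\phicc(\wnp,\dnmp,\muns,\zns)\big)$. Since $\phicc$ is concave in its last two arguments, Jensen's inequality gives
\begin{align*}
 \tfrac{1}{t}\sum_{s=1}^t \phicc(\wnp,\dnmp,\muns,\zns) \;\le\; \phicc(\wnp,\dnmp,\muntpav,\zntpav) ,
\end{align*}
so adding and subtracting $t\phicc(\wnp,\dnmp,\muntpav,\zntpav)$ on the left and rearranging yields the upper bound $\sum_s \phicc(\wns,\dnms,\muns,\zns)-t\phicc(\wnp,\dnmp,\muntpav,\zntpav) \le \tfrac12 \upperb(t,\wnp,\dnmp)$. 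The lower bound is obtained symmetrically: sum~\eqref{eq:prop-difference-znt-zn} in $s$, apply the same Abel-summation and triangle-inequality manipulations (now to $\norm{\muns-\munp}^2$ and $\norm{\mb\zns-\znp}^2$), and invoke convexity of $\phicc$ in its first two arguments via Jensen, $\sum_s\phicc(\wns,\dnms,\munp,\znp)\ge t\phicc(\wntpav,\dnmtpav,\munp,\znp)$, to conclude $\sum_s \phicc(\wns,\dnms,\muns,\zns)-t\phicc(\wntpav,\dnmtpav,\munp,\znp) \ge -\tfrac12 \upperb(t,\munp,\znp)$.

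The main obstacle is the careful bookkeeping of the boundary and cross terms in the summation by parts and the triangle-inequality splits so that the resulting expression matches the exact form of $\upperb$ in~\eqref{eq:cummulative-difference-dnmt-xn}; in particular, one must track that the telescoping in Lemma~\ref{le:basic-bound-convex-concave-function-differences} is stated in terms of $\mbk\dnms$ (the agreement component) rather than $\dnms$, so that the boundary term $\norm{\mbk\dnm_1 - \dnmp}^2/\gradstep_1$ must be expanded via $\norm{\mbk\dnm_1}\le\norm{\dnm_1}$ before applying the triangle inequality to reach the clean constant $\tfrac{2}{\gradstep_1}(\norm{\dnm_1}^2+\norm{\dnmp}^2)$ in~\eqref{eq:cummulative-difference-dnmt-xn}.
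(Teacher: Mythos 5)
Your proposal is correct and follows essentially the same route as the paper's proof: summing~\eqref{eq:prop-difference-dnmt-xn} (respectively~\eqref{eq:prop-difference-znt-zn}) over $s$, regrouping the telescoping terms and dropping the nonpositive final boundary term, bounding the initial term via the triangle inequality together with $\norm{\mbk}=1$, and then applying Jensen's inequality through concavity (respectively joint convexity) of $\phicc$ to pass to the running time-averages. The bookkeeping you describe matches the exact form of $\upperb$ in~\eqref{eq:cummulative-difference-dnmt-xn}, so no gap remains.
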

\begin{proof}
  By adding~\eqref{eq:prop-difference-dnmt-xn} over $s=1,\dots,t$,
  % and discarding the negative term $-\pxw(\wnp,\dnmp,\wn_1,\dnm_1)$,
  we obtain
  \begin{align*}
    &\,
    2\sum_{s=1}^{t}(\phicc(\wns,\dnms,\muns,\zns)-\phicc(\wnp,\dnmp,\muns,\zns))
    \\
    \le &\,\sum_{s=2}^{t} \big(\norm{\wns -\wnp}^2+\norm{\mbk\dnms
      -\dnmp}^2\big)
    \big(\tfrac{1}{\gradsteps}-\tfrac{1}{\gradstepsm}\big)
    % -\pxw(\wnp,\dnmp,\wns,\dnms)\big)
    % \nonumber
    % \\
    % &\,
    % +\sum_{s=2}^{t}\! 
    % +  \norm{\mbk\dnms -\dnmp}^2
    % \big(\tfrac{1}{\gradsteps}-\tfrac{1}{\gradstepsm}\big)
    % -\pxw(\wnp,\dnmp,\wns,\dnms)\big)
    \nonumber
    \\
    &\,+\tfrac{1}{\gradstep_{1}} 
    \big(\norm{\wn_{1}-\wnp}^2+\norm{\mbk\dnm_{1}-\dnmp}^2\big)  
    \nonumber   
    \\
    &\,+6\sum_{s=1}^{t} \gradsteps (\norm{\subgwns}^2+ \norm{\subgdnms}^2) 
    \nonumber 
    \\
    +&\,2(2+\cstep\lambdaup) \sum_{s=1}^{t} \norm{\subgdnms}\norm{\lapkbk\dnms}
    +2\norm{\lapkbk\dnmp}\sum_{s=1}^{t} \norm{\subgdnms}.
    \nonumber
  \end{align*}
  This is bounded from above by~$\upperb(t,\wnp,\dnmp)$ because
  $\norm{\mbk\dnm_{1}-\dnmp}^2\le 2\norm{\dnm_{1}}^2
  +2\norm{\dnmp}^2$, which follows from the triangular inequality,
  Young's inequality, the sub-multiplicativity of the norm, and the
  identity $\norm{\mbk}= 1$. Finally, by the concavity of~$\phicc$ in
  the last two arguments,
  \begin{align*}
    \phicc(\wnp,\dnmp,\muntpav,\zntpav) \ge \frac{1}{t}
    \sum_{s=1}^{t}\phicc(\wnp,\dnmp, \muns,\zns),
  \end{align*}
  so the upper bound in the statement follows. Similarly, we obtain
  the lower bound by adding~\eqref{eq:prop-difference-znt-zn} over
  $s=1,\dots,t$
  % discarding the positive term $\pz(\znp,\zn_1)$,
  and using that $\phicc$ is jointly convex in the first two
  arguments,
  \begin{align*}
    \phicc(\wntpav,\dnmtpav,\muns,\zns)
    \le\frac{1}{t}\sum_{s=1}^{t}\phicc(\wns,\dnms,\muns,\zns) ,
  \end{align*}
  which  completes the proof.
\end{proof}

The combination of the pair of inequalities in
Lemma~\ref{le:cummulative-concave-function-differences} allows us to
derive the saddle-point evaluation error of the running time-averages
in the next result.

\begin{proposition}\longthmtitle{Saddle-point evaluation error of
    running time-averages}\label{prop:approx-saddle-points}
  Under the same hypotheses of
  Lemma~\ref{le:basic-bound-convex-concave-function-differences}, for
  any saddle point~$(\wnstarwo, \dnmstar, \munstar,\znstarwo)$
  of~$\phicc$ on $\wnsetan\times\dnmset\times\munsetan\times\znset$
  %  % 
  % {\color{red} on $\wnsetan\times\dnmsetint\times\munsetan\times\znset$}
  %  % 
  with $\dnmstar=\dm^*\otimes\ones_N$ and
  $\znstarwo=z^*\otimes\ones_N$ for some
  $(\dm^*,z^*)\in\dmset\times\zset$,
  %  % 
  % {\color{red} for some  $(\dm^*,z^*)\in\dmsetintb\times\zset$,}
  %  % 
  the following holds:
  \begin{align}\label{eq:general-bound-lower-upper}
    &\,-\upperb(t,\munstar,\znstarwo)-\upperb(t,\wntpav,\dnmtpav)
    \notag
    \\
    \le&\,2t\phicc(\wntpav,\dnmtpav,\muntpav,\zntpav)
    -2t\phicc(\wnstarwo,\dnmstar,\munstar,\znstarwo) \notag
    \\
    \le&\,
    \upperb(t,\wnstarwo,\dnmstar)+\upperb(t,\muntpav,\zntpav)\,.
  \end{align}
\end{proposition}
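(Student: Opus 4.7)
The plan is to establish both inequalities in~\eqref{eq:general-bound-lower-upper} by invoking Lemma~\ref{le:cummulative-concave-function-differences} twice at carefully chosen reference points, and then using the saddle-point property of $(\wnstarwo,\dnmstar,\munstar,\znstarwo)$ to substitute for $\phicc$ evaluated at ``mixed'' arguments. Since the iterates are projected at every step onto $\cartnset$, which is convex, the running time-averages $(\wntpav,\dnmtpav)$ and $(\muntpav,\zntpav)$ also lie in $\wnsetan\times\dnmset$ and $\munsetan\times\znset$, respectively, and are thus admissible as reference points in Lemma~\ref{le:cummulative-concave-function-differences}.

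For the upper bound, I would apply the first inequality of Lemma~\ref{le:cummulative-concave-function-differences} with $(\wnp,\dnmp)=(\wnstarwo,\dnmstar)$ and the second inequality with $(\munp,\znp)=(\muntpav,\zntpav)$. Subtracting one from the other cancels the common running sum $\sum_{s=1}^{t}\phicc(\wns,\dnms,\muns,\zns)$ and produces the cross-term estimate
\begin{align*}
&\,2t\big[\phicc(\wntpav,\dnmtpav,\muntpav,\zntpav)
-\phicc(\wnstarwo,\dnmstar,\muntpav,\zntpav)\big]\\
&\qquad\le\upperb(t,\wnstarwo,\dnmstar)+\upperb(t,\muntpav,\zntpav).
\end{align*}
Applying the saddle-point property to the concave part of $\phicc$ then yields $\phicc(\wnstarwo,\dnmstar,\muntpav,\zntpav)\le\phicc(\wnstarwo,\dnmstar,\munstar,\znstarwo)$, which I would substitute into the left-hand side to obtain the upper bound in~\eqref{eq:general-bound-lower-upper}.

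For the lower bound, I would symmetrically apply the first inequality of Lemma~\ref{le:cummulative-concave-function-differences} with $(\wnp,\dnmp)=(\wntpav,\dnmtpav)$ and the second with $(\munp,\znp)=(\munstar,\znstarwo)$, subtract, and invoke the complementary saddle-point inequality $\phicc(\wntpav,\dnmtpav,\munstar,\znstarwo)\ge\phicc(\wnstarwo,\dnmstar,\munstar,\znstarwo)$. The only conceptual subtlety is that the running time-averages $(\wntpav,\dnmtpav)$ and $(\muntpav,\zntpav)$ generally violate the agreement constraints, but this is not an obstacle: the saddle-point inequalities recalled in Section~\ref{sec:prelims-optimization} hold over the entire sets $\wnsetan\times\dnmset$ and $\munsetan\times\znset$, not only over their agreement subspaces, so they apply at these average points. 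Beyond this observation, the remainder is algebraic bookkeeping that combines the two applications of Lemma~\ref{le:cummulative-concave-function-differences} with the two halves of the saddle-point property.
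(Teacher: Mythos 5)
Your proposal is correct and follows essentially the same route as the paper: both apply Lemma~\ref{le:cummulative-concave-function-differences} at the saddle point and at the running time-averages (admissible by convexity of the sets), and then invoke the two halves of the saddle-point property, the only difference being that the paper first forms the two sandwich chains around $\sum_{s=1}^{t}\phicc(\wns,\dnms,\muns,\zns)$ and sums them, whereas you pair the four inequalities directly into the upper and lower bounds. This is merely a reorganization of the same algebra, so no further comment is needed.
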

\begin{proof}
  We show the result in two steps, by evaluating the bounds from
  Lemma~\ref{le:cummulative-concave-function-differences} in two sets
  of points and combining them.  First, choosing
  $(\wnp,\dnmp,\munp,\znp)=(\wnstarwo,\dnmstar,\munstar,\znstarwo)$ in
  the bounds of
  Lemma~\ref{le:cummulative-concave-function-differences}; invoking
  the saddle-point relations
  \begin{align*}
    \phicc(\wnstarwo,\dnmstar,\muntpav,\zntpav)&\,
    \le\phicc(\wnstarwo,\dnmstar,\munstar,\znstarwo)
    \\
    &\,\le\phicc(\wntpav,\dnmtpav,\munstar,\znstarwo)
  \end{align*}
  where $(\wntav, \dnmtav,\muntav,\zntav)
  \in\wnsetan\times\dnmset\times\munsetan\times\znset$, for each $t\ge
  1$, by convexity;
  %% 
  % {\color{red}where $(\wntav, \dnmtav,\muntav,\zntav)
  % \in\wnsetan\times\dnmsetdifb\times\munsetan\times\znset$, for each
  % $t\ge 1$, by convexity of each $\dmseti$;}
  %% 
  and combining the resulting inequalities, we obtain
  \begin{align}\label{eq:relation-lag-stars}
    -\frac{\upperb(t,\munstar,\znstarwo)}{2} \le&\,
    \sum_{s=1}^{t}\phicc(\wns,\dnms,\muns,\zns)
    -t\phicc(\wnstarwo,\dnmstar,\munstar,\znstarwo) \notag
    \\
    \le&\, \frac{\upperb(t,\wnstarwo,\dnmstar)}{2}.
  \end{align}
  Choosing
  $(\wnp,\dnmp,\munp,\znp)=(\wntpav,\dnmtpav,\muntpav,\zntpav)$ in the
  bounds of Lemma~\ref{le:cummulative-concave-function-differences},
  multiplying each by $-1$ and combining them, we get
  \begin{align}\label{eq:relation-lag-av}
    &\,-\frac{\upperb(t,\wntpav,\dnmtpav)}{2} \le\Big(
    t\phicc(\wntpav,\dnmtpav,\muntpav,\zntpav) \notag
    \\
    &\,\qquad-\sum_{s=1}^{t}\phicc(\wns,\dnms,\muns,\zns)\Big)
    \le\frac{\upperb(t,\muntpav,\zntpav)}{2} .
  \end{align}
  The result now follows by summing~\eqref{eq:relation-lag-stars}
  and~\eqref{eq:relation-lag-av}.
\end{proof}

\subsection{Bounding the cumulative
  disagreement}\label{sec:cum-disagreement}

Given the dependence of the saddle-point evaluation error obtained in
Proposition~\ref{prop:approx-saddle-points} on the cumulative
disagreement of the estimates~$\dnmt$ and~$\znt$, here we bound their
disagreement over time.  We treat the subgradient terms as
perturbations in the dynamics~\eqref{eq:minmax-dyn-wnxnzn} and study
the input-to-state stability properties of the latter. This approach is well suited
for scenarios where the size of the subgradients can be uniformly
bounded.  Since the coupling in~\eqref{eq:minmax-dyn-wnxnzn}
with~$\wnt$ and $\munt$, as well as among the estimates $\dnmt$
and~$\znt$ themselves, takes place only through the subgradients, we
focus on the following pair of decoupled dynamics,
\begin{subequations}\label{eq:disagreement-minmax-dyn-wnxnzn}
  \begin{align}
    \dnmtphat &= \dnmt-\cstep\tlapb\dnmt
    +\pertdnmt\label{eq:disag-minmax-dyn-xn}
    \\
    \zntphat &=
    \znt-\cstep\tlapb\znt+\pertznt\label{eq:disag-minmax-dyn-zn}
    \\
    (\dnmtp,\zntp) &=
    \projec{\dnmset\times\znset}{\dnmtphat,\zntphat}\,,\notag
  \end{align}
\end{subequations}
where $\{\pertdnmt\}_{t\ge 1}\subset\realdimdn$, $\{\pertznt\}_{t\ge
  1}\subset\realdimzn$ are arbitrary sequences of disturbances,
and~$\mathcal{P}_{\dnmset\times\znset}$ is the orthogonal projection
onto~$\dnmset\times\znset$ {\color{black} as defined
  in~\eqref{eq:def-projection}.}

The next result characterizes the input-to-state stability properties
of~\eqref{eq:disagreement-minmax-dyn-wnxnzn} with respect to the
agreement space.  The analysis builds on the proof strategy in our
previous work~\cite[Prop. V.4]{DMN-JC:14-tnse}.  The main trick here
is to bound the projection residuals in terms of the disturbance. The
proof is presented in the Appendix.

\begin{proposition}\longthmtitle{Cumulative disagreement
    on~\eqref{eq:disagreement-minmax-dyn-wnxnzn} over
    jointly-connected weight-balanced
    digraphs}\label{prop:cum-disagreement-projected-subg-saddle}
  Let $\{\graph_s\}_{s\ge1}$ be a sequence of $B$-jointly connected,
  $\degn$-nondegenerate, weight-balanced digraphs.
  For $\degnt'\in(0,1)$, let
  \begin{align}\label{eq:degnt-condition-convergence-speed}
    \degnt\defin \min \big\{\, \degnt',\;
    (1-\degnt')\frac{\degn}{\doutmaxT}
    \,\big\} ,
  \end{align}
  where
  \begin{align*} %\label{eq:doutmaxT}
    \doutmaxT \defin\max\setdefbig{\doutt(k)}{k\in\vertexset,\:
      t\in\integerspositive}.
  \end{align*}
  Then, for any choice of consensus stepsize such that
  \begin{align}\label{eq:eta-cstep-condition}
    \cstep \in\Big[\:\frac{\degnt}{\degn},\: \frac{1-\degnt}{
      \doutmaxT}\:\Big],
  \end{align}
  the dynamics~\eqref{eq:disag-minmax-dyn-xn} over~$\{\graph_t\}_{t\ge
    1}$ is input-to-state stable with respect to the nullspace of the
  matrix $\lapkbdouble$.  Specifically, for any
  $t\in\integerspositive$ and any $\{\persdnmt\}_{s=1}^{t-1} \subset
  \realdimdn$,
  \begin{align}\label{eq:iss-dnm}
    &\,\norm{\lapkb\dnmt} \le\tfrac{2^4\norm{\dnm_1}
    }{3^2}\Big(1-\frac{\degnt}{4
      N^2}\Big)^{\lceil\tfrac{t-1}{B}\rceil} +\consissu \!\max_{1\le
      s\le t-1}\norm{\persdnmt}\,,
  \end{align}
  where
  \begin{align}\label{eq:def-constant-disagreement}
    \consissu\defin \frac{2^5/3^2} {1- \big(1-\frac{\degnt}{4
        N^2}\big)^{1/B}}\,
  \end{align}
  and the cumulative disagreement satisfies
  \begin{align}\label{eq:cum-iss-dnm}
    \sum_{t=1}^{t'} \norm{\lapkb\dnmt} \le \consissu \Big(
    \tfrac{\norm{\dnm_1}}{2} +\sum_{t=1}^{t'-1}\norm{\pertdnmt}\Big).
  \end{align}
  Analogous bounds hold interchanging~$\dnmt$ with~$\znt$.
\end{proposition}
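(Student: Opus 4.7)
The plan is to reduce~\eqref{eq:disagreement-minmax-dyn-wnxnzn} to a perturbed consensus dynamics on the disagreement variable~$\lapkb\dnmt$ and apply a scrambling-type contraction estimate for $B$-jointly connected stochastic-matrix products, in the spirit of~\cite[Prop. V.4]{DMN-JC:14-tnse}. Since each~$\graph_t$ is weight-balanced, $\mb\tlapb=\tlapb\mb=0$, hence $\lapkb\tlapb=\tlapb\lapkb=\tlapb$; applying $\lapkb$ to~\eqref{eq:disag-minmax-dyn-xn} yields the closed pre-projection disagreement recursion $\lapkb\dnmtphat=(\identity-\cstep\tlapb)\lapkb\dnmt+\lapkb\pertdnmt$.

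I would then absorb the projection by treating its residual $\bm{e}_t\defin\dnmtp-\dnmtphat$ as an extra input. Under~\eqref{eq:eta-cstep-condition}, $\identity-\cstep\tlap$ has nonnegative entries whose rows sum to one; since $\dnmset=\dmset^N$ with $\dmset$ convex, the vector $\dnmt-\cstep\tlapb\dnmt$ is a component-wise convex combination of elements of~$\dmset$ and therefore belongs to~$\dnmset$. The minimum-distance characterization of the projection then yields
\begin{align*}
  \norm{\bm{e}_t}\le\norm{(\dnmt-\cstep\tlapb\dnmt)-\dnmtphat}=\norm{\pertdnmt},
\end{align*}
so the projected iterate obeys $\lapkb\dnmtp=(\identity-\cstep\tlapb)\lapkb\dnmt+\lapkb(\pertdnmt+\bm{e}_t)$ with an effective disturbance of norm at most~$2\norm{\pertdnmt}$.

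Finally I would apply the scrambling estimate to this perturbed consensus dynamics. By~\eqref{eq:eta-cstep-condition} and weight-balance, each $\identity-\cstep\tlap$ is doubly stochastic with diagonal entries~$\ge\degnt$ and with off-diagonal entries $\cstep\adj_{ij,t}\ge\degnt$ whenever $(i,j)\in\edgearg{t}$; over each $B$-cycle the union graph is strongly connected, so the corresponding product of consensus matrices has coefficient of ergodicity at most $1-\degnt/(4N^2)$, contracting the disagreement by this factor per cycle. Letting $\Phi_{t,s}$ denote the transition operator associated with the unperturbed consensus dynamics over steps $s,\dots,t-1$, I would unroll the perturbed recursion as
\begin{align*}
  \lapkb\dnmt=\Phi_{t,1}\lapkb\dnm_1+\sum_{s=1}^{t-1}\Phi_{t,s+1}\lapkb(\pertdnms+\bm{e}_s),
\end{align*}
apply the per-step consequence of the per-cycle contraction, $\norm{\Phi_{t,s}\lapkb v}\le\tfrac{2^4}{3^2}(1-\degnt/(4N^2))^{\lceil(t-s)/B\rceil}\norm{v}$, and combine this with $\norm{\pertdnms+\bm{e}_s}\le 2\norm{\pertdnms}$ and the geometric sum $\sum_{k\ge 0}(1-\degnt/(4N^2))^{k/B}=1/(1-(1-\degnt/(4N^2))^{1/B})$ to obtain~\eqref{eq:iss-dnm} with the constant~$\consissu$; for~\eqref{eq:cum-iss-dnm}, I would sum the unrolled inequality over~$t$ and swap summation orders in the resulting double sum so that each pointwise~$\max$ is replaced by~$\sum_{s}\norm{\pertdnms}$. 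The main obstacle I anticipate is the sharp book-keeping of the prefactors $2^4/3^2$ and $2^5/3^2$, which demands a careful extension of the per-$B$-cycle scrambling contraction to a per-step estimate and a precise accounting of the factor~$2$ from absorbing the projection residual.
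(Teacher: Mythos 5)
Your proposal is correct and follows essentially the same route as the paper: bound the projection residual by $\norm{\pertdnmt}$ using that $\dnmt-\cstep\tlapb\dnmt\in\dnmset$, treat $\pertdnmt+\bm{e}_t$ as a disturbance of norm at most $2\norm{\pertdnmt}$, apply the $B$-jointly-connected contraction estimate to the perturbed disagreement dynamics, and obtain the cumulative bound by summing and swapping the order of summation. The only difference is that the paper imports the per-cycle contraction bound (with the factors $2^4/3^2$, $2^5/3^2$) directly from~\cite[Prop.~V.4]{DMN-JC:14-tnse} rather than re-deriving it via scrambling coefficients as you sketch.
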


\subsection{Convergence of saddle-point subgradient dynamics with Laplacian
  averaging}\label{sec:proof-main-result-saddle-partial-agreement}

Here we characterize the convergence properties of the
dynamics~\eqref{eq:minmax-dyn-wnxnzn} using the developments above.
In informal terms, our main result states that, under a mild
connectivity assumption on the communication digraphs, a suitable
choice of decreasing stepsizes, and assuming that the agents'
estimates and the subgradient sets are uniformly bounded, the
saddle-point evaluation error under~\eqref{eq:minmax-dyn-wnxnzn}
decreases proportionally to~$\frac{1}{\sqrt{t}}$.  {\color{black} We
  select the learning rates according to the following scheme.
  \begin{assumption}\longthmtitle{\emph{Doubling Trick scheme} for the
      learning rates}\label{ass:Doubling-trick}
    The agents define a sequence of epochs numbered by
    $m=0,1,2,\dots$, and then use the constant value
    $\gradsteps=\tfrac{1}{\sqrt{2^m}}$ in each epoch~$m$, which has
    $2^m$ time steps $s=2^m,\dots,2^{m+1}-1$.
    % That is, the learning rates are piecewise constant over epochs
    % that are exponentially longer, and are successively halved after
    % each epoch.
    Namely,
    \begin{alignat*}{2}
      \eta_1= &\;1\, , & \quad \eta_2=\eta_3=&\; 1/\sqrt{2}\,,
      \\
      \eta_4=\dots=\eta_7= &\;1/2 , & \qquad \eta_8=\dots=\eta_{15}
      =&\; 1/\sqrt{8}\,,
    \end{alignat*}
    % $\eta_1= 1$;\; $\eta_2=\eta_3= 1/\sqrt{2}$;\;
    % $\eta_4=\eta_5=\eta_6=\eta_7=1/2$;\;
    % $\eta_8=\dots=\eta_{15}=1/\sqrt{8}$;
    and so on.  In general, %$\eta_{2^m}=\dots=\eta_{2^{m+1}-1}=1/\sqrt{2^m}$. 
    \begin{align*}
      \eta_{2^m}=\dots=\eta_{2^{m+1}-1}=1/\sqrt{2^m} .\eqoprocend
    \end{align*}
   % The resulting sequence of learning rates is uniformly bounded. \oprocend
   %
   % \margind{Jorge, in hindsight I think that the above sentence is
   % almost distracting. When this property is used, the reader will
   % not have a problem in seeing that it is indeed uniformly bounded
   % (of course, since it is decreasing!)}
  \end{assumption}
}

{\color{black} Note that the agents can compute the values in
  Assumption~\ref{ass:Doubling-trick} without communicating with each
  other.  Figure~\ref{fig:Doubling-trick} provides an illustration of
  this learning rate selection and compares it against constant and
  other sequences of stepsizes. Note that, unlike other choices
  commonly used in optimization~\cite{DPB-JNT:97,DPB-AN-AEO:03}, the
  Doubling Trick gives rise to a sequence of stepsizes that is not
  square summable.
  % Note that this sequence decreases more slowly --and is thus
  % computationally more stable-- than \textit{square-summable}
  % sequences like $\{1/t\}_{t\ge 1}$ that are often used in
  % distributed
  % optimization.
  
  \begin{figure}[bth]
    \centering
    %% I DON'T THINK IEEE TAC WILL LET US ADD TITLES OVER FIGURES
    % \textbf{Comparison of sequences of learning rates}\par%\medskip
    \includegraphics[width=.9\linewidth]{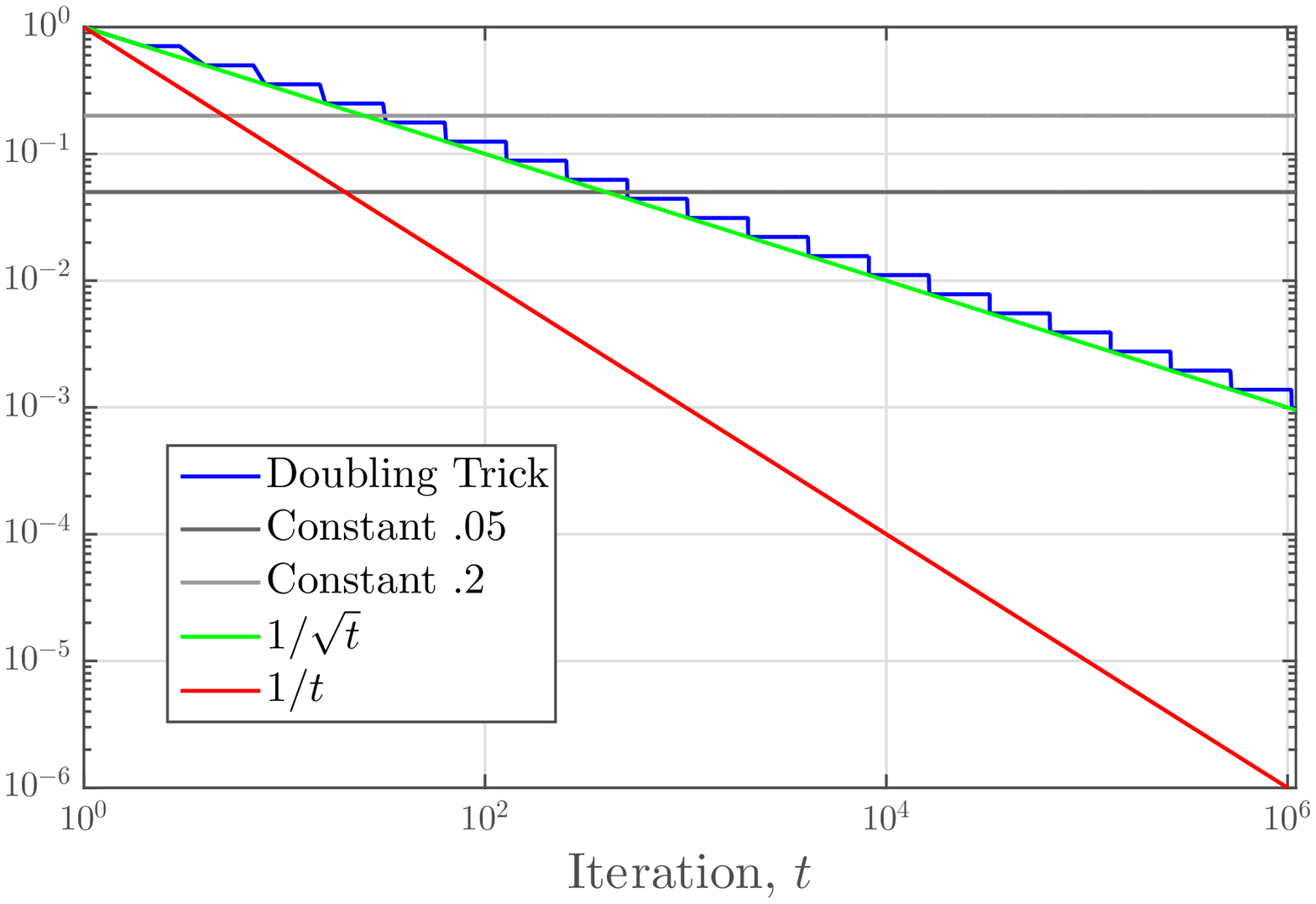}
    \caption{
    	    {\color{black}
    	Comparison of sequences of learning rates. We compare
      the sequence of learning rates resulting from
      the Doubling Trick in Assumption~\ref{ass:Doubling-trick}
      against a constant stepsize,  the sequence $\{
    1/\sqrt{t} \}_{t\ge 1}$, and the square-summable harmonic
    sequence $\{1/t\}_{t\ge 1}$. 
}
    % These sequences have in common that
    % the sum of their terms are $\infty$, but they differ in that the
    % former is not \emph{square summable}.  (In our upcoming
    % simulations we also use constant learning rates for comparison.)
    % the sum of the squares of the terms in the former diverges. 
    % In contrast with common analysis in the literature that rely on
    % sequences that are "square-summable", the Doubling Trick scheme
    % has learning rates that are not square summable. 
  }\label{fig:Doubling-trick}
\end{figure}
}

\begin{theorem}\longthmtitle{Convergence of the \emph{saddle-point
      dynamics with Laplacian
      averaging}~\eqref{eq:minmax-dyn-wnxnzn}}\label{th:convergence-general-saddle-point-w-d-mu-z}
  Let $\{(\wnt,\dnmt,\munt,\znt)\}_{t\ge 1}$ be generated
  by~\eqref{eq:minmax-dyn-wnxnzn} over a sequence
  $\{\graph_t\}_{t\ge1}$ of $B$-jointly connected,
  $\degn$-nondegenerate, weight-balanced digraphs satisfying
  $\sup_{t\ge 1} \sigmamax(\tlap)\le\lambdaup$ with~$\cstep$ selected
  as in~\eqref{eq:eta-cstep-condition}.  Assume
  % the estimates are bounded as
  \begin{align*}
    \norm{\wnt} \le\estboundwn,\;\;\norm{\dnmt}\le\estbounddnm,\;\;
    \norm{\munt}\le\estboundmun,\;\;\norm{\znt}\le\estboundzn,
  \end{align*}
  % $\norm{\wnt} \le\estboundwn$,
  % $\norm{\dnmt}\le\estbounddnm$,
  % $\norm{\munt}\le\estboundmun$ and
  % $\norm{\znt}\le\estboundzn$
  for all $t\in\integerspositive$ whenever the sequence of learning
  rates $\{\gradstept\}_{t\ge 1}\subset\realpositive$ is uniformly
  bounded.  Similarly, assume % that the subgradients are bounded as
  \begin{align*}
    \norm{\subgwnt} \le\bsubgwn,\;\norm{\subgdnmt}\le\bsubgdnm,\;
    \norm{\subgmunt}\le\bsubgmun,\;\norm{\subgznt}\le\bsubgzn
  \end{align*}
  % as $\norm{\subgwnt} \le\bsubgwn$, 
  % $\norm{\subgdnmt}\le\bsubgdnm$, 
  % $\norm{\subgmunt}\le\bsubgmun$ and
  % $\norm{\subgznt}\le\bsubgzn$.
  for all 
  $t\in\integerspositive$.
  Let the learning rates be chosen according to the \textit{Doubling
    Trick} in Assumption~\ref{ass:Doubling-trick}.
  % Consider the following choice of learning rates called the
  % \emph{Doubling Trick scheme}: for $m=0,1,2,\dots$,
  %  % ,$\lceil\log_2 t\rceil$,
  % we take $\gradsteps=\tfrac{1}{\sqrt{2^m}}$ in each epoch of $2^m$
  % time steps $s=2^m,\dots,2^{m+1}-1$.
  % 
  Then, for any saddle point~$(\wnstarwo, \dnmstar, \munstar,\znstarwo)$ 
  of~$\phicc$ on $\wnsetan\times\dnmset\times\munsetan\times\znset$ 
  %  % 
  % {\color{red} on $\wnsetan\times\dnmsetint\times\munsetan\times\znset$}
  %  % 
  with $\dnmstar=\dm^*\otimes\ones_N$ and 
  $\znstarwo=z^*\otimes\ones_N$ for some $(\dm^*,z^*)\in\dmset\times\zset$,
  %% 
  % {\color{red} for some  $(\dm^*,z^*)\in\dmsetintb\times\zset$,}
  %% 
  which is assumed to exist, the following holds
  {\color{black}
  for the running time-averages:
  }
  \begin{align}\label{eq:general-convergence-bound}
    -\frac{ \cdoublingfac{\mun}{\zn}+\cdoublingfac{\wn}{\dnm}}{2\sqrt{t-1}}
    \le&\,
    \phicc(\wntav,\dnmtav,\zntav,\muntav)
    -\phicc(\wnstarwo,\dnmstar,\znstarwo,\munstar)
    \notag
    \\
    \le&\,
    \frac{\cdoublingfac{\wn}{\dnm}+\cdoublingfac{\mun}{\zn}}{2\sqrt{t-1}} ,
  \end{align}
  where $\cdoublingfac{\wn}{\dnm}\defin\frac{\sqrt{2}}{\sqrt{2}-1}
  \cdoubling{\wn}{\dnm}$ with
  \begin{align}\label{eq:def-cdoubling}
    \cdoubling{\wn}{\dnm}\defin &\,4(
    \estboundwn^2+\estbounddnm^2 )
    +6(\bsubgwn^2+\bsubgdnm^2) \notag
    \\
    &\,+\bsubgdnm (3+\cstep\lambdaup) \consissu 
    \big(\estbounddnm
    + 2\bsubgdnm\big)\,,
  \end{align}
  and $\cdoublingfac{\zn}{\mun}$ is analogously defined.
\end{theorem}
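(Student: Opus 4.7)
The plan is to combine Proposition~\ref{prop:approx-saddle-points} (applied with $t$ replaced by $t-1$ so that the ergodic variables $\wntav,\dnmtav,\muntav,\zntav$ match the running averages appearing there) with the cumulative disagreement bound of Proposition~\ref{prop:cum-disagreement-projected-subg-saddle}, and then specialize to the Doubling Trick stepsizes of Assumption~\ref{ass:Doubling-trick}. Instantiating~\eqref{eq:general-bound-lower-upper} at the saddle point sandwiches the error $2(t-1)\big(\phicc(\wntav,\dnmtav,\muntav,\zntav)-\phicc(\wnstarwo,\dnmstar,\munstar,\znstarwo)\big)$ between $-\upperb(t-1,\munstar,\znstarwo)-\upperb(t-1,\wntav,\dnmtav)$ and $\upperb(t-1,\wnstarwo,\dnmstar)+\upperb(t-1,\muntav,\zntav)$. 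It therefore suffices to prove that each such $\upperb(t-1,\cdot,\cdot)$ is at most $\cdoublingfac{\wn}{\dnm}\sqrt{t-1}$ or $\cdoublingfac{\mun}{\zn}\sqrt{t-1}$ depending on which block of variables appears, since dividing by $2(t-1)$ then yields~\eqref{eq:general-convergence-bound}.

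Next, I bound each of the five summands in the expansion~\eqref{eq:cummulative-difference-dnmt-xn} of $\upperb$. Using the uniform bounds on the estimates, $\norm{\mbk}=1$, and convexity (which transfers the same bounds to the running averages), the telescoping sum collapses to $4(\estboundwn^2+\estbounddnm^2)(1/\gradstep_{t-1}-1/\gradstep_1)$, the initial-condition block is at most $4(\estboundwn^2+\estbounddnm^2)/\gradstep_1$, and the subgradient-squared term is at most $6(\bsubgwn^2+\bsubgdnm^2)\sum_{s=1}^{t-1}\gradsteps$. For the two disagreement summands I match~\eqref{eq:minmax-dyn-xn} with the template~\eqref{eq:disag-minmax-dyn-xn} through $\pertdnmt=-\gradstept\subgdnmt$, so that $\norm{\pertdnmt}\le\gradstept\bsubgdnm$; then~\eqref{eq:cum-iss-dnm} gives $\sum_{s=1}^{t-1}\norm{\lapkbk\dnms}\le\consissu\big(\estbounddnm/2+\bsubgdnm\sum_{s=1}^{t-2}\gradsteps\big)$. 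The fourth summand of~\eqref{eq:cummulative-difference-dnmt-xn} is then immediate via $\norm{\subgdnms}\le\bsubgdnm$. The fifth summand vanishes when $\dnmp=\dnmstar$, since $\lapkbk\dnmstar=0$ by agreement, and when $\dnmp=\dnmtav$ it is handled by $\norm{\lapkbk\dnmtav}\le\tfrac{1}{t-1}\sum_{s=1}^{t-1}\norm{\lapkbk\dnms}$ together with $\sum_{s=1}^{t-1}\norm{\subgdnms}\le(t-1)\bsubgdnm$, so the $1/(t-1)$ cancels cleanly. The coefficients $(2+\cstep\lambdaup)$ and $1$ from these two summands combine into the $(3+\cstep\lambdaup)$ prefactor appearing in $\cdoubling{\wn}{\dnm}$.

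The last step is the Doubling Trick computation. Splitting time into epochs $m=0,\dots,M$ with $\gradsteps=1/\sqrt{2^m}$ on $\{2^m,\dots,2^{m+1}-1\}$ and summing the resulting geometric series shows that, for every $t\ge 2$,
$$
\max\bigg\{\tfrac{1}{\gradstep_{t-1}},\;\sum_{s=1}^{t-1}\gradsteps\bigg\}\le\tfrac{\sqrt{2}}{\sqrt{2}-1}\sqrt{t-1}.
$$
Substituting this into the bounds of the previous step packages every contribution of $\upperb(t-1,\cdot,\cdot)$ into $\cdoubling{\wn}{\dnm}\sqrt{t-1}$ (or $\cdoubling{\mun}{\zn}\sqrt{t-1}$), with the global multiplier $\sqrt{2}/(\sqrt{2}-1)$ absorbed into $\cdoublingfac{\wn}{\dnm}$ (respectively $\cdoublingfac{\mun}{\zn}$). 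The main technical obstacle is the compact packaging of the two disagreement summands into $\bsubgdnm(3+\cstep\lambdaup)\consissu(\estbounddnm+2\bsubgdnm)$ while ensuring that no spurious factors of $t$ arise from evaluating the ISS bound at $\dnmp=\dnmtav$; the remainder is the routine telescoping and geometric series calculation just sketched.
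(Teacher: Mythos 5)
Your proposal is correct, and it reaches the same constants $\cdoublingfac{\wn}{\dnm}$, $\cdoublingfac{\mun}{\zn}$, but the way you exploit the Doubling Trick differs from the paper. The paper's proof works per epoch: it fixes a constant rate $\eta'=1/\sqrt{t'}$ on a horizon $t'$, so that the telescoping summand in $\upperb$ vanishes identically, derives a per-epoch bound $\alpha_m\sqrt{2^m}$ whose constant is controlled uniformly by the boundedness assumptions ($\alpha_m\le\cdoubling{\wn}{\dnm}$), and then sums the geometric series $\sum_m\sqrt{2^m}\le\tfrac{\sqrt{2}}{\sqrt{2}-1}\sqrt{t}$, treating each epoch as a restart with the previous epoch's final values as initial conditions. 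You instead bound $\upperb(t-1,\cdot,\cdot)$ in a single pass over the whole horizon with the time-varying rates: you use that the Doubling-Trick stepsizes are non-increasing so the coefficients $\norm{\wns-\wnp}^2+\norm{\mbk\dnms-\dnmp}^2$ can be pulled out and the differences $1/\gradsteps-1/\gradstep_{s-1}$ telescope, and you then invoke the two scalar facts $1/\gradstep_{t-1}\le\sqrt{t-1}$ and $\sum_{s=1}^{t-1}\gradsteps\le\tfrac{\sqrt{2}}{\sqrt{2}-1}\sqrt{t-1}$, applying the cumulative disagreement bound~\eqref{eq:cum-iss-dnm} once with $\norm{\pertdnms}\le\gradsteps\bsubgdnm$ rather than once per epoch. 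This buys a cleaner argument (no restart bookkeeping, and Proposition~\ref{prop:cum-disagreement-projected-subg-saddle} is stated for arbitrary disturbances, so the single global application is legitimate), at the price of needing monotonicity of the stepsizes, which the chosen schedule satisfies; your accounting of the $(2+\cstep\lambdaup)$ and $1$ coefficients into $(3+\cstep\lambdaup)$, the vanishing of $\lapkbk\dnmstar$, and the cancellation of $1/(t-1)$ against $\sum_s\norm{\subgdnms}\le(t-1)\bsubgdnm$ at $\dnmp=\dnmtav$ all check out. One shared caveat, not a gap relative to the paper: when the generic point is the saddle point itself, the coefficient bound $4(\estboundwn^2+\estbounddnm^2)$ tacitly uses $\norm{\wnstarwo}\le\estboundwn$, $\norm{\dnmstar}\le\estbounddnm$, an assumption the paper also makes implicitly when it asserts that its bound for $\upperb(t',\wntspav,\dnmtspav)$ also covers $\upperb(t',\wnstarwo,\dnmstar)$.
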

\begin{proof}
  We divide the proof in two steps. In step~(i), we use the general
  bound of Proposition~\ref{prop:approx-saddle-points} making a choice
  of constant learning rates over a fixed time horizon~$t'$.  In
  step~(ii), we use multiple times this bound together with the
  Doubling Trick to produce the implementation procedure in the
  statement.
  In~(i), to further bound~\eqref{eq:general-bound-lower-upper}, we
  choose $\gradstept=\eta'$ for all $s\in\until{t'}$ in
  both~$\upperb(t',\wnstarwo,\dnmstar)$ and $\upperb(t',
  \wntspav,\dnmtspav)$. By doing this, we make zero the first two
  lines in~\eqref{eq:cummulative-difference-dnmt-xn}, and then we
  upper-bound the remaining terms using the bounds on the estimates
  and the subgradients.  The resulting inequality also holds replacing
  $(\wntspav,\dnmtspav)$ by $(\wnstarwo,\dnmstar)$,
  \begin{align}\label{eq:upper-term}
    \upperb(t', \wntspav,\dnmtspav)
    % 2\sum_{s=1}^{t}\phicc(\wns,\dnms,\zns)-2t\phicc(\wnp,\dnmp,\zntpav)\le
    % \\
    \le
    &\,\tfrac{2}{\eta'} 
    \big(\norm{\wn_{1}}^2+\estboundwn^2
    +\norm{\dnm_{1}}^2+\estbounddnm^2\big)  
    \nonumber   
    \\
    &\,+6(\bsubgwn^2+\bsubgdnm^2) \eta' t'
    \nonumber 
    \\
    +2(2+\cstep\lambdaup) \bsubgdnm \sum_{s=1}^{t'} &\,\norm{\lapkbk\dnms}
    +2\norm{\lapkbk\dnmtspav} \bsubgdnm t'\,.
  \end{align}
  % Before using the bound in~\eqref{eq:cum-iss-dnm}
  % for $\sum_{s=1}^{t'} \norm{\lapkbk\dnms}$ there is a detail.
  Regarding the bound for $\upperb(t',\wnstarwo,\dnmstar)$, we just
  note that $\norm{\lapkbk\dnmstar}=0$, whereas for $\upperb(t',
  \wntspav,\dnmtspav)$, we note that, by the triangular inequality, we
  have
  \begin{align*}
    \norm{\lapkbk\dnmtspav}
    =&\,\frac{1}{t'}\norm{\lapkbk\big(\sum_{s=1}^{t'} \dnms\big)} \le
    \frac{1}{t'}\sum_{s=1}^{t'}\norm{\lapkbk \dnms} .
  \end{align*}
  That is, we get 
  \begin{align}\label{eq:upper-term-u-disagreement}
    % &\,\upperb(t',\wnstarwo,\dnmstar)
    % \notag
    % \\
    &\,\upperb(t',\wnstarwo,\dnmstar)\le\upperb(t', \wntspav,\dnmtspav)
    \notag
    \\
    \le
    &\,\tfrac{2}{\eta'}
    \big(\norm{\wn_{1}}^2+\estboundwn^2
    +\norm{\dnm_{1}}^2+\estbounddnm^2\big)  
    \nonumber   
    % \\
    % &\,
    +6(\bsubgwn^2+\bsubgdnm^2) \eta' t'
    \nonumber 
    \\
    &\, +2\bsubgdnm (3+\cstep\lambdaup) 
    \sum_{s=1}^{t'}\norm{\lapkbk\dnms} .
  \end{align}
  % where $\option=1$ in the present bound and $\option=0$ in the 
  % corresponding bound for
  % $\upperb(t',\wnstarwo,\dnmstar)$.
  We now further bound $\sum_{s=1}^{t'} \norm{\lapkbk\dnms}$
  in~\eqref{eq:cum-iss-dnm} noting that
  $\norm{\pertdnmt}=\norm{\gradstept\subgdnmt}\le\gradstept
  \bsubgdnm=\eta'\bsubgdnm$, to obtain
  \begin{align*} %\label{eq:cum-iss-further-bound-dnm}
    \sum_{s=1}^{t'} \norm{\lapkbk\dnms} \le &\,\consissu \Big(
    \tfrac{\norm{\dnm_1}}{2} +\sum_{s=1}^{t'-1}\eta'\bsubgdnm\Big)
    \\
    \le &\, \consissu \big( \tfrac{\norm{\dnm_1}}{2}
    +t'\eta'\bsubgdnm\big) .
  \end{align*}
  Substituting this bound in~\eqref{eq:upper-term-u-disagreement},
  taking $\eta'=\frac{1}{\sqrt{t'}}$ and noting that $1\le\sqrt{t'}$,
  we get
  \begin{align}\label{eq:upper-term-u-saddle-learning-rates}
    \upperb(t', \wntspav,\dnmtspav)
    \le
    % &\,2\sqrt{t'}
    % \big(\norm{\wn_{1}}^2+\estboundwn^2
    % +\norm{\dnm_{1}}^2+\estbounddnm^2\big)  
    % \nonumber   
    % \\
    % &\,+6(\bsubgwn^2+\bsubgdnm^2) \sqrt{t'}
    % \nonumber 
    % \\
    % &\, +2\bsubgdnm (2+\cstep\lambdaup+\option) 
    % \consissu\big( \tfrac{\norm{\dnm_1}}{2}
    % +\sqrt{t'} \bsubgdnm\big)
    \alpha'\sqrt{t'} ,
  \end{align}
  where
  \begin{align*}
    \alpha'\defin &\,2(\norm{\wn_{1}}^2+\norm{\dnm_{1}}^2
    +\estboundwn^2+\estbounddnm^2 )
    +6(\bsubgwn^2+\bsubgdnm^2)
    \\
    &\,+2\bsubgdnm (3+\cstep\lambdaup) \consissu 
    \big(\tfrac{\norm{\dnm_1}}{2}
    + \bsubgdnm\big) .
  \end{align*}
  This bound is of the type $\upperb(t',
  \wntspav,\dnmtspav)\le\alpha'\sqrt{t'}$, where $\alpha'$ depends on
  the initial conditions. This leads to step~(ii).  According to the
  Doubling Trick, for $m=0,1,\dots \lceil\log_2 t\rceil $, the
  dynamics is executed in each epoch of $t'=2^m$ time steps
  $t=2^m,\dots,2^{m+1}-1$, where at the beginning of each epoch the
  initial conditions are the final values in the previous epoch. The
  bound for $\upperb(t', \wntspav,\dnmtspav)$ in each epoch is
  $\alpha'\sqrt{t'}=\alpha_m\sqrt{2^m}$, where $\alpha_m$ is the
  multiplicative constant
  in~\eqref{eq:upper-term-u-saddle-learning-rates} that depends on the
  initial conditions in the corresponding epoch.
  % 
  % To eliminate the dependence on the latter, we 
  Using the assumption that the estimates are bounded, i.e.,
  $\alpha_m\le \cdoubling{\wn}{\dnm}$, we deduce that the bound in
  each epoch is $\cdoubling{\wn}{\dnm}\sqrt{2^m}$. By the Doubling
  Trick,
  \begin{align*}
    \sum_{m=0}^{\lceil\log_2 t\rceil} \sqrt{2^m}
    =\tfrac{1-\sqrt{2}^{\lceil\log_2 t\rceil+1}}{1-\sqrt{2}}
    \le
    \tfrac{1-\sqrt{2t}}{1-\sqrt{2}}
    \le \tfrac{\sqrt{2}}{\sqrt{2}-1}\sqrt{t} ,
  \end{align*}
  we conclude that
  \begin{align*}
    \upperb(t,\wnstarwo,\dnmstar)\le \upperb(t, \wntpav,\dnmtpav)\le
    \tfrac{\sqrt{2}}{\sqrt{2}-1} \cdoubling{\wn}{\dnm}\sqrt{t} .
  \end{align*}
  Similarly,
  \begin{align*}
    -\upperb(t,\munstar,\znstarwo)\ge -\upperb(t, \muntpav,\zntpav)\ge
    -\tfrac{\sqrt{2}}{\sqrt{2}-1} \cdoubling{\mun}{\zn}\sqrt{t} .
  \end{align*}
  The desired pair of inequalities follows substituting these bounds
  in~\eqref{eq:general-bound-lower-upper} and dividing by~$2t$.
\end{proof}

{\color{black} In the statement of
  Theorem~\ref{th:convergence-general-saddle-point-w-d-mu-z}, the
  constant $\consissu$ appearing in~\eqref{eq:def-cdoubling} encodes
  the dependence on the network properties.  The running time-averages
  can be updated sequentially as $\wntpav\defin
  \frac{t-1}{t}\wntav+\frac{1}{t}\wnt$ without extra memory.  Note
  also that we assume feasibility of the problem because this property
  does not follow from the behavior of the algorithm.

\begin{remark}\longthmtitle{Boundedness of
    estimates}\label{re:boundedness-estimates}
  {\rm The statement of
    Theorem~\ref{th:convergence-general-saddle-point-w-d-mu-z}
    requires the subgradients and the estimates produced by the
    dynamics to be bounded. In the literature of distributed (sub-)
    gradient methods, it is fairly common to assume the boundedness of
    the subgradient sets relying on their continuous dependence on the
    arguments, which in turn are assumed to belong to a compact
    domain.  Our assumption on the boundedness of the estimates,
    however, concerns a saddle-point subgradient dynamics for general
    convex-concave functions, and its consequences vary depending on
    the application. We come back to this point and discuss the
    treatment of dual variables for distributed constrained
    optimization in
    Section~\ref{sec:distributed-bound-optimal-dual-set}. \oprocend }
\end{remark}
}

\section{Applications to distributed constrained convex
  optimization}\label{sec:application-constrained-optimization}

In this section we particularize our convergence result in
Theorem~\ref{th:convergence-general-saddle-point-w-d-mu-z} to the case
of convex-concave functions arising from the Lagrangian of the
constrained
optimization~\eqref{eq:problem-linear-inequality-constraints}
discussed in Section~\ref{sec-linear-semidefinite-constraints-intro}.
The Lagrangian formulation with explicit agreement
constraints~\eqref{eq:separable-constraints-c} matches the general
saddle-point problem~\eqref{eq:min-max-convex-concave-function} for
the convex-concave
function~$\map{\phicc}{\prodwseti\times\dnmset\times
  (\realnonnegative^{m})^N}{\real}$ defined by
\begin{align}\label{eq:lagrangian-global-constraint}
  \phicc(\wn,\dnm,\zn)= \sum_{i=1}^N \big( f^i(w^i,\dm^i) + {z^i}\tp
  g^{i}(w^i,\dm^i) \big) .
  % 
  % \sum_{i=1}^N \big( f^i(w^i)
  % + {z^i}\tp g^{i}(w^i) \big)
\end{align}
Here the arguments of the convex part are, on the one hand, the local
primal variables across the network, $\wn=(w^1,\dots,w^N)$ (not
subject to agreement), and, on the other hand, the copies across the
network of the global decision vector, $\dnm=(\dm^1,\dots,\dm^N)$
(subject to agreement).  The arguments of the concave part
are the network estimates of the Lagrange multiplier,
$\zn=(z^1,\dots,z^N)$ (subject to agreement).
% This formulation agrees with
% problem~\eqref{eq:min-max-convex-concave-function} with the
% choices~$\wnsetan=\prodwsetiwo$,
% and~$\znset=(\realnonnegative^{m})^N$.
Note that this convex-concave function is the associated Lagrangian
for~\eqref{eq:problem-linear-inequality-constraints} \emph{only} under
the agreement on the global decision vector and on the Lagrange
multiplier associated to the global constraint, i.e.,
\begin{align}\label{eq:lagrangian-global-constraint-agreement}
  \lag(\wn,\dm,z)= \phicc(\wn,\dm\otimes\ones_N,z\otimes\ones_N).
\end{align}
% To illustrate the main result, there is no need to include
% dependence on~$(\dnm,\mun)\in\dnmsetdifb\times\munsetan$.
In this case, the \emph{saddle-point dynamics with Laplacian
  averaging}~\eqref{eq:minmax-dyn-wnxnzn} takes the following form:
the updates of each agent $i\in\until{N}$ are as follows,
\begin{subequations}\label{eq:minmax-dyn-constrained-agent-updates}
  \begin{align}
    \hat{w}^i_{t+1}=&\,w^i_t-\gradstept
    (\subdarg{f^i,w^i_t}+\subdarg{g^i,w^i_t}\tp\, {z^i}) ,
    \label{eq:minmax-dyn-constrained-agent-w}
    \\
    \hat{\dm}^i_{t+1}=&\,\dm^i_t+\cstep\sum_{j=1}^N \adj_{ij,t} (\dm^j_t-\dm^i_t)
    \notag
    \\
    &\,\quad-\gradstept (\subdarg{f^i,\dm^i_t}+\subdarg{g^i,\dm^i_t}\tp\, {z^i}) ,
    \label{eq:minmax-dyn-constrained-agent-dm}
    \\
    \hat{z}^i_{t+1}=&\,z^i_t+\cstep\sum_{j=1}^N\adj_{ij,t} (z^j_t-z^i_t)
    +\gradstept g^i(w^i_t) ,
    \label{eq:minmax-dyn-constrained-agent-z}
    \\
    % (w^i_{t+1},\dm^i_{t+1},z^i_{t+1})=
    % &\,\projec{\wseti\times\dmset\times(\realnonnegative^{m}\cap\ballc{0}{r})}
    % {\hat{w}^i_{t+1},\hat{\dm}^i_{t+1},\hat{z}^i_{t+1}} ,
    \begin{bmatrix}
      w^i_{t+1}
      \\
      \dm^i_{t+1}
      \\
      z^i_{t+1}
    \end{bmatrix}
    =&\,
    \begin{bmatrix}
      \projec{\wseti}{\hat{w}^i_{t+1}}
      \\
      \projec{\dmset}{\hat{\dm}^i_{t+1}}
      \\
      \projec{\realnonnegative^{m}\cap\ballc{0}{r}}{\hat{z}^i_{t+1}}
    \end{bmatrix} ,
    \label{eq:minmax-dyn-constrained-agent-projection}
  \end{align}
\end{subequations}
%
% \margind{Jorge, there is a need to transpose the row vector
%   ${z^i}\tp\subdarg{g^i,\dm^i_t}$ so that it is a column. I maintain
%   the definition below of the rows of the matrix
%   $\subdarg{g^i,\dm^i_t}$. It is a common conundrum that we normally
%   write the Jacobian containing the gradients as rows but for scalar
%   functions we consider the gradients as columns.}
%
where the vectors $\subdarg{f^i,w^i_t}\in\real^{n_i}$ and
$\subdarg{f^i,\dm^i_t}\in\realdimd$ are subgradients of $f^i$ with
respect to the first and second arguments, respectively, at the point
$(w^i_t,\dm^i_t)$, i.e.,
\begin{align}\label{eq:corollary-subgradient-choices}
  \subdarg{f^i,w^i_t}\in\partial_{w^i}{f^i}(w^i_t,\dm^i_t), \qquad
  \subdarg{f^i,\dm^i_t}\in\partial_{\dm}{f^i}(w^i_t,\dm^i_t) ,
\end{align}
and the matrices $\subdarg{g^i,w^i}\in\real^{m\times {n_i}}$ and
$\subdarg{g^i,\dm}\in\real^{m\times \dimdn}$ contain in the $l$th row
an element of the subgradient sets
$\partial_{w^i}{g^i_l}(w^i_t,\dm^i_t)$
and~$\partial_{\dm}{g^i_l}(w^i_t,\dm^i_t)$, respectively.
{\color{black} (Note that these matrices correspond, in the
  differentiable case, to the Jacobian block-matrices of the vector
  function
  % $g^i$ 
  $\map{g^i}{\real^{n_i}\times\realdimd}{\real^m}$.) We refer to this
  strategy as the \emph{Consensus-based Saddle-Point (Sub-) Gradient
    (C-SP-SG) algorithm} and present it in pseudo-code format in
  Algorithm~\ref{alg:saddle-point-constrained-optimization}.}

{\color{black} Note that the orthogonal projection of the estimates of the multipliers
%%
%\margind{David: the estimates of the dual variables are really the
%  running time-averages (but we do not need to be obsessive with the language)}
%%
in~\eqref{eq:minmax-dyn-constrained-agent-projection} is unique.  }
The radius~$r$ employed in its definition is a
% radii~$\{r_i\}_{i=1}^N\subset\realpositive$ are
design parameter that is either set \emph{a priori} or determined by
the agents.  We discuss this point in detail below in
Section~\ref{sec:distributed-bound-optimal-dual-set}.
%
%
% {\color{black} (Since the projection sets are convex and closed, the
% projection values \textit{exist} and are \textit{unique}.)  }

%%% Package algorithm2e

\begin{algorithm} %[H]\
  {\color{black}
  \vspace{4pt}
 \SetKwInput{KwInit}{Initialization}
  \KwData{Agents' data for
    Problem~\eqref{eq:problem-linear-inequality-constraints}:
    $\{f^i,g^i,\wseti\}_{i=1}^N$, $\dmset$
    \\ 
    % Agents' adjacency values $\{\graph_t=(\until{N},\edgeset_t,\Adj_t)\}_{t\ge 1}$\\
    \quad Agents' adjacency values $\{\Adj_t\}_{t\ge 1}$
    \\
    \quad Consensus stepsize~$\cstep$ as
    in~\eqref{eq:eta-cstep-condition}
    \\
    \quad Learning rates $\{\gradstep_t\}_{t\ge 1}$ as in
    Assumption~\ref{ass:Doubling-trick}
    \\
    \quad Radius $r$ s.t. $\ballc{0}{r}$ contains optimal dual set
    for~\eqref{eq:problem-linear-inequality-constraints}
    \\
    \quad Number of iterations $T$, indep. of rest of parameters
    \\
  }
  \KwResult{Agent $i$ outputs $ \wiTav, \dmiTav, \ziTav$}
  \KwInit{Agent $i$ sets $w^i_1\in\real^{n_i}$, $\dm^i_1\in\realdimd$,
    $z^i_1\in\realnonnegative^{m}$, $(w^i_1)^{\text{av}}=w^i_1$,
    $(\dm^i_1)^{\text{av}}=\dm^i_1$, $(z^i_1)^{\text{av}}=z^i_1$}
  \For{$t\in\{2,\dots,T-1 \}$}{ \For{$i \in \until{N}$}{ \vspace{3pt}
      Agent $i$ selects (sub-) gradients as
      in~\eqref{eq:corollary-subgradient-choices}
      \\
      %	\tcc*{Communication: none}
      \vspace{3pt} Agent $i$ updates $(\wit, \dmit, \zit)$ as
      in~\eqref{eq:minmax-dyn-constrained-agent-updates}
      \\
      \vspace{3pt}
      Agent $i$ updates $\witpav=\frac{t-1}{t}\witav+\frac{1}{t}\wit$,\\
      \vspace{2pt}
      \qquad\qquad$\dmitpav=\frac{t-1}{t}\dmitav+\frac{1}{t}\dmit$ , \\
      \vspace{2pt}
      \qquad\qquad$\zitpav=\frac{t-1}{t}\zitav+\frac{1}{t}\zit$
      \vspace{2pt}
      %	\tcc*{Similarly for $\dmitpav$, $\zitpav$}
      %	$\wntpav\defin \frac{t-1}{t}\wntav+\frac{1}{t}\wnt$		
    }
  }
  \caption{C-SP-SG
    algorithm}\label{alg:saddle-point-constrained-optimization}
  }
\end{algorithm}

The characterization of the saddle-point evaluation error
under~\eqref{eq:minmax-dyn-constrained-agent-updates} is a direct
consequence of
Theorem~\ref{th:convergence-general-saddle-point-w-d-mu-z}.

\begin{corollary}\longthmtitle{Convergence of the C-SP-SG
    algorithm}\label{cor:app-constrained-opt}
  For each $i\in\until{N}$, let the sequence $\{(w^i_t, \dm^i_t,
  z^i_t)\}_{t\ge 1}$ be generated by the coordination
  algorithm~\eqref{eq:minmax-dyn-constrained-agent-updates}, over a
  sequence of graphs $\{\graph_t\}_{t\ge1}$ satisfying the same
  hypotheses as
  Theorem~\ref{th:convergence-general-saddle-point-w-d-mu-z}.
  Assume that the sets $\dmset$ and~$\wseti$ are compact (besides
  being convex), and the radius $r$ is such that~$\ballc{0}{r}$
  contains the optimal dual set of the constrained
  optimization~\eqref{eq:problem-linear-inequality-constraints}.
  %
  % Assume also that the subgradients are uniformly bounded as
  % \begin{align*}
  %   \norm{ \subdarg{f^i,w^i_t}}\le&\, H_{f,w},\;
  %   \norm{\subdarg{f^i,\dm^i_t}}\le H_{f,\dm},\;
  %   \\
  %   \norm{\subdarg{g^i, w^i_t}}\le&\, H_{g,w},\;
  %   \norm{\subdarg{g^i,\dm^i_t}}\le H_{g,\dm}.
  %    %   \norm{\subgmunt}\le\bsubgmun,\;
  % \end{align*}
  % (This can hold, for instance, if the subgradient \textit{sets}
  % $\partial_{w^i}{f^i}, \partial_{\dm}{f^i}, \partial_{w^i}{g^i_l},\partial_{\dm}{g^i_l}$
  % are bounded.)
  %    % 
  % 
  % \margind{Jorge, I wrote the property in terms of the subgradient
  %   \textbf{sets}, because otherwise it seems like magic that the
  %   bound holds uniformly in time. However, we can leave the
  %   corresponding hypotheses in the general theorem as they were.}
    %
  Assume also that 
  %for each $i\in\until{N}$, 
  the subgradient sets are
  bounded, in $\wseti\times\dmset$, as follows,
  \begin{align*}
    \partial_{w^i}{f^i}\subseteq\ballc{0}{H_{f,w}}&\, ,\;
    \partial_{\dm}{f^i}\subseteq\ballc{0}{H_{f,\dm}} ,\;
    \\
    \partial_{w^i}{g^i_l}\subseteq\ballc{0}{H_{g,w}}&\, ,\;
    \partial_{\dm}{g^i_l}\subseteq \ballc{0}{H_{g,\dm}} ,
    % \norm{\subgmunt}\le\bsubgmun,\;
  \end{align*}
  for all $l\in\until{m}$.
  Let~$(\wnstarwo,\dm^*,z^*)$ be any saddle point of the
  Lagrangian~$\lag$ defined
  in~\eqref{eq:lagrangian-global-constraint-agreement} on the set
  $\prodwseti\times\dmset\times\real^m$. (The existence of such
  saddle-point implies that strong duality is attained.)  Then, under
  Assumption~\ref{ass:Doubling-trick} for the learning rates,
%  Then, under the same
%  choice of $\cstep$ and learning rates~ $\{\gradstept\}_{t\ge
%    1}\subset\realpositive$
%  of~Theorem~\ref{th:convergence-general-saddle-point-w-d-mu-z}, 
  the
  saddle-point evaluation error~\eqref{eq:general-convergence-bound}
  holds for the running time-averages:
  \begin{align}\label{eq:constrained-optimization-convergence-bound}
    -\frac{ \cdoublingfac{\mun}{\zn}+\cdoublingfac{\wn}{\dnm}}{2\sqrt{t-1}}
    \le&\,
    \phicc(\wntav,\dnmtav,\zntav)
    -\lag(\wnstarwo,\dm^*,z^*)
    \notag
    \\
    \le&\,
    \frac{\cdoublingfac{\wn}{\dnm}+\cdoublingfac{\mun}{\zn}}{2\sqrt{t-1}} ,
  \end{align}
  for~$\cdoublingfac{\wn}{\dnm}$ and $\cdoublingfac{\mun}{\zn}$ as
  in~\eqref{eq:def-cdoubling}, with
  \begin{align*}
    % \estbounddnm=&\,\bsubgdnm=
    \estboundmun=&\,\bsubgmun=0 ,
    \qquad
        \estboundzn= \sqrt{N} r ,
    \\
    \estboundwn=&\,(\sum_{i=1}^N\diam(\wseti)^2)^{1/2} ,
    \qquad
    \estbounddnm=\sqrt{N}\diam(\dmset) ,
    \\
    \bsubgwn^2=&\,
    N (H_{f, w}+r \sqrt{m}H_{g, w})^2 ,
    \quad\!\!
    \bsubgzn^2=\sum_{i=1}^N (\sup_{w^i\in\wseti} g_i(w^i))^2 ,
    \\
    \bsubgdnm^2=&\,N (H_{f,\dm}+r \sqrt{m}H_{g,\dm})^2 ,
  \end{align*}
  {\color{black} where $\diam(\cdot)$ refers to the diameter of the
    sets.  }
\end{corollary}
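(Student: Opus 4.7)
The plan is to read Corollary~\ref{cor:app-constrained-opt} as a direct specialization of Theorem~\ref{th:convergence-general-saddle-point-w-d-mu-z}, so the proof reduces to (i) matching the Lagrangian setting with the general saddle-point framework, (ii) verifying the uniform boundedness hypotheses on the states and the subgradients, and (iii) reading off the constants. First, I would observe that the convex-concave function $\phicc$ in~\eqref{eq:lagrangian-global-constraint} has no $\mun$ argument, so the general dynamics~\eqref{eq:minmax-dyn-wnxnzn} collapses to~\eqref{eq:minmax-dyn-constrained-agent-updates} once one computes the block-wise subgradients of~\eqref{eq:lagrangian-global-constraint} termwise and identifies $\wnsetan=\prodwseti$, $\dnmset=\dmset^N$, and $\znset=(\realnonnegative^{m}\cap\ballc{0}{r})^N$. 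By the hypothesis on $r$ the dual optimizer $z^*$ lies in $\ballc{0}{r}$, so any saddle point $(\wnstarwo,\dm^*,z^*)$ of $\lag$ on $\prodwseti\times\dmset\times\real^m$ induces, via~\eqref{eq:lagrangian-global-constraint-agreement}, the saddle point $(\wnstarwo,\dm^*\otimes\ones_N,z^*\otimes\ones_N)$ of $\phicc$ on $\wnsetan\times\dnmset\times\znset$ under agreement on $\dnm$ and $\zn$, which is exactly the object Theorem~\ref{th:convergence-general-saddle-point-w-d-mu-z} treats.

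Second, boundedness of the states is immediate from the projections in~\eqref{eq:minmax-dyn-constrained-agent-projection}: each iterate satisfies $w^i_t\in\wseti$, $\dm^i_t\in\dmset$, and $z^i_t\in\realnonnegative^{m}\cap\ballc{0}{r}$, all compact by hypothesis. Stacking over $i\in\until{N}$ gives the stated values of $\estboundwn$, $\estbounddnm$, and $\estboundzn=\sqrt{N}\,r$, while the absence of the $\mun$ variable is encoded as $\estboundmun=\bsubgmun=0$. For the subgradient bounds, a subgradient of $\phicc$ in $w^i$ has the form $\subdarg{f^i,w^i_t}+\subdarg{g^i,w^i_t}\tp z^i_t$; bounding each row of $\subdarg{g^i,w^i_t}$ in norm by $H_{g,w}$ and using $\normone{z^i_t}\le\sqrt{m}\,\norm{z^i_t}\le\sqrt{m}\,r$ yields $\norm{\subdarg{g^i,w^i_t}\tp z^i_t}\le\sqrt{m}\,r\,H_{g,w}$ via Cauchy--Schwarz on the row expansion. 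Thus each per-agent $w^i$-subgradient has norm at most $H_{f,w}+r\sqrt{m}\,H_{g,w}$, and summing squared norms over $i$ produces the stated $\bsubgwn^2$. The bound for $\bsubgdnm^2$ is obtained analogously, and the subgradient with respect to $z^i$ is just $g^i(w^i_t,\dm^i_t)$, whose norm is controlled by continuity of $g^i$ on the compact set $\wseti\times\dmset$, yielding $\bsubgzn^2$.

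With all hypotheses of Theorem~\ref{th:convergence-general-saddle-point-w-d-mu-z} in force and with learning rates chosen by Assumption~\ref{ass:Doubling-trick}, bound~\eqref{eq:general-convergence-bound} applied at the saddle point identified above gives~\eqref{eq:constrained-optimization-convergence-bound} once one uses $\phicc(\wnstarwo,\dm^*\otimes\ones_N,z^*\otimes\ones_N)=\lag(\wnstarwo,\dm^*,z^*)$ from~\eqref{eq:lagrangian-global-constraint-agreement}. The only computational step that requires care is the operator-type bound on $\subdarg{g^i,w^i_t}\tp z^i_t$; everything else is a matter of assembling the constants. One subtle point I would flag explicitly is that restricting the dual variable to $\realnonnegative^{m}\cap\ballc{0}{r}$ rather than $\real^m$ does not move the saddle point, precisely because $\ballc{0}{r}$ is assumed to contain the optimal dual set, so the outer supremum in $z$ is unchanged and the min-max value is preserved.
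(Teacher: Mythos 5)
Your proposal is correct and follows essentially the same route as the paper: the corollary is proved by verifying that the hypotheses of Theorem~\ref{th:convergence-general-saddle-point-w-d-mu-z} hold automatically (boundedness of states from the projections onto the compact sets, boundedness of the subgradients from the assumed bounds and the dual radius $r$), with the one delicate point being exactly the one you flag, namely that truncating the dual variables to $\realnonnegative^{m}\cap\ballc{0}{r}$ preserves the saddle points because $\ballc{0}{r}$ contains the optimal dual set. Your additional detail on assembling the constants (the $\normone{z^i_t}\le\sqrt{m}\,r$ bound on $\subdarg{g^i,w^i_t}\tp z^i_t$, etc.) is the natural elaboration of what the paper leaves implicit.
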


The proof of this result follows by noting that the hypotheses of
Theorem~\ref{th:convergence-general-saddle-point-w-d-mu-z} are
automatically satisfied. The only point to observe is that all the
saddle points of the Lagrangian $\lag$ defined
in~\eqref{eq:lagrangian-global-constraint-agreement} on the set
$\prodwseti\times\dmset\times\realnonnegative^{m}$, are also contained
in~$\prodwseti\times\dmset\times\ballc{0}{r}$.  {\color{black} Note
  also that we assume feasibility of the problem because this property
  does not follow from the behavior of the algorithm.}

{\color{black}
  \begin{remark}\longthmtitle{Time, memory, computation, and
      communication complexity of the C-SP-SG
      algorithm}\label{re:complexity-particular-dynamics}
    {\rm We discuss here the complexities associated with the
      execution of the C-SP-SG algorithm:
      \begin{itemize}
      \item \textbf{Time complexity}: According to
        Corollary~\ref{cor:app-constrained-opt}, the saddle-point
        evaluation error is smaller than $\eps$ if
        $\frac{\cdoublingfac{\wn}{\dnm}+\cdoublingfac{\mun}{\zn}}{2\sqrt{t}}\le
        \eps$. This provides a lower bound
        \begin{align*}
          t\ge \big(\frac{\cdoublingfac{\wn}{\dnm} +
            \cdoublingfac{\mun}{\zn}}{2\eps}\big)^2 ,
        \end{align*}
        on the number of required iterations.
      \item \textbf{Memory complexity}: Each agent~$i$ maintains the
        current updates $(w^i_{t}, \dm^i_{t},
        z^i_{t})\in\real^{n_i}\times\realdimd\times\real^{m}$, and the
        corresponding current running time-averages $(\witav, \dmitav,
        \zitav)$ with the same dimensions.
      \item \textbf{Computation complexity}: Each agent~$i$ makes a
        choice/evaluation of subgradients, at each iteration, from the
        subdifferentials~$\partial_{w^i}{f^i}\subseteq\real^{n_i}$,
        $\partial_{\dm}{f^i}\subseteq\realdimd$,
        $\partial_{w^i}{g^i_l}\subseteq\real^{n_i}$ ,
        $\partial_{\dm}{g^i_l}\subseteq\realdimd$, the latter for
        $l\in\until{m}$.
        % (If any of the functions is differentiable, the
        % subdifferential contains only the gradient.)
        Each agent also projects its estimates on the
        set~$\wseti\times\dmset\times\realnonnegative^{m}\cap\ballc{0}{r}$. The
        complexity of this computation depends on the sets $\wseti$
        and $\dmset$.
        
      \item \textbf{Communication complexity:} Each agent $i$ shares
        with its neighbors at each iteration a vector
        in~$\realdimd\times\real^{m}$. With the information received,
        the agent updates the global decision variable $\dm^i_{t}$
        in~\eqref{eq:minmax-dyn-constrained-agent-dm} and the Lagrange
        multiplier $z^i_{t}$
        in~\eqref{eq:minmax-dyn-constrained-agent-z}.  (Note that the
        variable $\dm^i_{t}$ needs to be maintained and communicated
        only if the optimization
        problem~\eqref{eq:problem-linear-inequality-constraints} has a
        global decision variable.)       \oprocend
      \end{itemize}
      %	When the convex-concave function has a separable structure
      % suitable for a distributed scenario, the update of each agent is
      % the local subgradient, and also the differences between its
      % previous estimate and the estimates of its neighboring
      % agents. These notions are made precise when we visit the
      % application to distributed constrained optimization in
      % Section~\ref{sec:application-constrained-optimization}.
      %	\oprocend
    }
  \end{remark}
}

\subsection{Distributed strategy to bound the optimal dual
  set}\label{sec:distributed-bound-optimal-dual-set}

The motivation for the design choice of \emph{truncating} the
projection of the dual variables onto a bounded set
in~\eqref{eq:minmax-dyn-constrained-agent-projection} is the
following. The subgradients of~$\phicc$ with respect to the primal
variables are \emph{linear} in the dual variables.  To guarantee the
boundedness of the subgradients of~$\phicc$ and of the dual variables,
required by the application of
Theorem~\ref{th:convergence-general-saddle-point-w-d-mu-z}, one can
introduce a projection step onto a compact set that preserves the
optimal dual set, a technique that has been used
in~\cite{AN-AO:09-jota,AN-AO:10-siam,THC-AN-AS:14}.
% where the bound required for the projection is assumed to be known
% beforehand. 
These works select the bound for the projection \emph{a priori},
whereas~\cite{MZ-SM:12} proposes a distributed algorithm to compute
a bound preserving the optimal dual set,
\emph{for} the case of a global inequality
constraint \emph{known to all the agents}. Here, we deal with a
complementary case, where the constraint is a sum of functions, each
known to the corresponding agents, that couple the local decision
vectors across the network.  For this case, we next describe how the
agents can compute, in a distributed way,
% radii $\{r_i\}_{i=1}^N\subset\realpositive$ such that each ball
% $\ballc{0}{r_i}$
a radius $r\in\realpositive$ such that the ball $\ballc{0}{r}$
contains the optimal dual set for the constrained
optimization~\eqref{eq:problem-linear-inequality-constraints}.
{\color{black} A radius with such property is not unique, and estimates
  with varying degree of conservativeness are possible.  }

% Next we provide a distributed strategy to bound the optimal dual set
% in a particular case of the constrained optimization
% problem~\eqref{eq:problem-linear-inequality-constraints}.  This can be
% used by the agents to design the truncated projections
% in~\eqref{eq:minmax-dyn-constrained-agent-projection} to guarantee the
% boundedness of the estimates of the Lagrange multipliers.  Our
% strategy addresses a scenario that is complementary to the one
% in~\cite{MZ-SM:12} because our constraint is a sum of functions, known
% to the corresponding agents, that couple the local decision vectors
% across the network. (In contrast,~\cite{MZ-SM:12} considers a globally
% known constraint involving a global decision vector in which the
% agents need to agree.) 

In our model,
%\begin{align*}
%  \sum_{i=1}^N g^1(w^i,\dm^i)
%\end{align*}
%for $w^i\in\wseti$ and $\dm^i\in\dmset$,
each agent $i$ has only access to the set~$\wseti$ and the
functions~$f^i$ and $g^i$. In turn, we make the important assumption
that there are no variables subject to agreement, i.e.,
$f^i(w^i,\dm)=f^i(w^i)$ and $g^i(w^i,\dm)=g^i(w^i)$ for all
$i\in\until{N}$, and we leave for future work the generalization to
the case where agreement variables are present.  Consider then the
following problem,
\begin{align}\label{eq:problem-constraints-wo-dnm}
  \min_{w^i\in\wseti,\,\forall i }&\, \sum_{i=1}^N f^i(w^i) \notag
  \\
  \rm{s.t.}&\, g^1(w^1)+\dots+g^N(w^N)\le 0
\end{align} {\color{black} where each $\wseti$ is compact as in
  Corollary~\ref{cor:app-constrained-opt}.  }
We first propose a bound on the optimal dual set and then describe a
distributed strategy that allows the agents to compute it. Let
$(\tilde{w}^1,\dots, \tilde{w}^N)\in\prodwsetiwo$ be a vector
satisfying the \emph{Strong Slater
  condition}~\cite[Sec. 7.2.3]{JBHU-CL:93}, called \emph{Slater
  vector}, and define
\begin{align}\label{eq:definition-gamma}
\gamma\defin\min_{l\in\until{m}} -\sum_{i=1}^N g^i_l(\tilde{w}^i) ,
\end{align}
which is positive by construction.
% a constraint of the form $g^1(w^1)+\dots+g^N(w^N)\le 0$, where
% $w^i\in\wseti\subseteq\real^{n_i}$ for $i\in\until{N}$ and the $i$th
% agent has access only to the set $\wseti$ and the
% function~$\map{g^i}{\real^{n_i}}{\real^m}$.
According to~\cite[Lemma 1]{AN-AO:10-siam} (which we amend imposing
that the Slater vector belongs to the abstract constraint
set~$\prodwseti$), we get that the optimal dual
set~$\dualoptset\subseteq\realnonnegative^m$ associated to the
constraint $g^1(w^1)+\dots+g^N(w^N)\le 0$ is bounded as follows,
\begin{align}\label{eq:bound-Asu-multipliers}
  \max_{z^*\in \dualoptset}\norm{z^*}\le \frac{1}{\gamma}
  \big(\sum_{i=1}^N f^i(\tilde{w}^i)-q(\bar{z})\big) ,
\end{align}
for any $\bar{z}\in\realnonnegative^{m}$, where
$\map{q}{\realnonnegative^{m}}{\real}$ is the dual function associated
to the optimization~\eqref{eq:problem-constraints-wo-dnm},
\begin{align}
  q(z)&= \inf_{w^i\in\wseti,\,\forall i}\;\;\lag(\wn,z)\notag
  \\
  &=\inf_{w^i\in\wseti,\,\forall i}\;\; \sum_{i=1}^N \big( f^i(w^i) +
  z\tp g^{i}(w^i) \big) =:\sum_{i=1}^N q^i(z) .\label{eq:def-q-dual}
\end{align}
{\color{black} Note that the right hand side
  in~\eqref{eq:bound-Asu-multipliers} is nonnegative by weak duality,
  and that $q(\bar{z})$ does not coincide with $-\infty$ for any
  $\bar{z}\in\realnonnegative^{m}$ because each set~$\wseti$ is
  compact.  }
%Defining
%\begin{align}\label{eq:def-q-dual}
%q^i(z)\defin \inf_{w^i\in\wseti}\;\; \big( f^i(w^i) + z\tp g^{i}(w^i) \big) ,
%\end{align}
%we can write
%\begin{align*}
%  q(z)&= \inf_{w^i\in\wseti,\,\forall i}\;\;\lag(\wn,z)
%  \\
%  &=\inf_{w^i\in\wseti,\,\forall i}\;\; \sum_{i=1}^N \big( f^i(w^i) +
%  z\tp g^{i}(w^i) \big) =\sum_{i=1}^N q^i(z) ,
%\end{align*}
%
%
%Invoking weak duality (i.e., $q(\bar{z})\le \sum_{i=1}^N
% f^i(\tilde{w}^i)$, because the arguments are feasible),
{\color{black} With this notation,
\begin{align*}
  % 0\le
  \sum_{i=1}^N f^i(\tilde{w}^i)-q(\bar{z})\le
  N\Big(\max_{j\in\until{N}} f^j(\tilde{w}^j)-\!\!\min_{j\in\until{N}}
  q^j(\bar{z})\Big).
\end{align*}
%\begin{align*}
%  0\le \sum_{i=1}^N f^i(\tilde{w}^i)-q(\bar{z})\le \sum_{i=1}^N
%  f^i(\tilde{w}^i) \le  N\max_{j\in\until{N}} f^j(\tilde{w}^j) ,
%\end{align*}
Using this bound in~\eqref{eq:bound-Asu-multipliers}, we conclude that
$\dualoptset\subseteq\zsetcom$, with
\begin{align}\label{eq:compact-set-multipliers}
  \zsetcom\defin
  &\,\realnonnegative^{m}\cap\ballcbig{0}{\frac{N}{\gamma}
    \Big(\max_{j\in\until{N}} f^j(\tilde{w}^j)-\!\!\min_{j\in\until{N}}
    q^j(\bar{z})\Big) } .
% &\,\setdef{z\in\realnonnegative^{m}: \norm{z}\le\frac{}{} }
\end{align}
} Now we briefly describe the distributed strategy that the agents can
use to bound the set~$\zsetcom$.  The algorithm can be divided in
three stages:
\begin{enumerate}
\item[(i.a)] Each agent finds the corresponding component~$\tilde{w}^i$
  of a Slater vector.
\end{enumerate}
%	By doing this, each agent is in possession of one coordinate
% of a Slater vector.
% 
For instance, if $\wseti$ is \emph{compact} (as is the case in
Corollary~\ref{cor:app-constrained-opt}), agent~$i$ can compute
% Precisely, agent $i$ computes
\begin{align*}
  \tilde{w}^i\in\argmin_{w^i\in\wseti} g^i_l(w^i) .
\end{align*}
% which is well defined if $\wseti$ is compact.
The resulting vector $(\tilde{w}^1,\dots, \tilde{w}^N)$ is a Slater
vector, i.e., it belongs to the set
\begin{align*}
  % \slset\defin\setdef{\wn\in\prodwseti}{g^1(w^1)+\dots+g^N(w^N)<
  % 0}
  \{(w^1,\dots, w^N)\in&\,\prodwsetiwo \::
  \\&\,g^1(w^1)+\dots+g^N(w^N)< 0\} ,
\end{align*}
which is nonempty by the Strong Slater condition.
%
% \margind{Jorge, we do not need to consider the following (commented
%   out) alternative becasue in our Corollary we assume compacity}
%
%   (Alternatively, if some~$\wseti$ is not \emph{compact}, one can
%   conceive an iterative two-stage procedure involving gradient
%   descent on $g^i$, so that agent $i$ obtains an estimate
%   $\hat{w}^i$, and Laplacian averaging to achieve consensus on the
%   sum $g^1(\hat{w}^1)+\dots+g^N(\hat{w}^N)$. The termination of this
%   procedure is determined when the latter quantity is found to be
%   negative.)
{\color{black}
  \begin{enumerate}
  \item[(i.b)] Similarly, the agents compute the corresponding
    component $q^i(\bar{z})$ defined in~\eqref{eq:def-q-dual}. The
    common value~$\bar{z}\in\realnonnegative^{m}$ does not depend on
    the problem data and can be~$0$ or any other value agreed upon by
    the agents beforehand.
  \end{enumerate}
}
%
% \margind{Jorge, in the above strategy, to avoid having a leader
%   diseminating the value of~$\bar{z}$, I proposed to choose~$0$, for
%   instance}
%
% \margind{David: check if there is any problem with this method when
% there is local infeasibility. (Oh, no problem for the definition of
% $q(z)$)}
%
\begin{enumerate}
\item[(ii)] The agents find a lower bound for~$\gamma$
  in~\eqref{eq:definition-gamma} in two stages: first they use a
  distributed consensus algorithm and at the same time they estimate
  the fraction of agents that have a positive estimate. Second, when
  each agent is convinced that every other agent has a positive
  approximation, given by a precise termination condition that is
  satisfied in finite time, they broadcast their estimates to their
  neighbors to agree on the minimum value across the network.
\end{enumerate}
Formally, each agent sets $y^i(0)\defin g^i(\tilde{w}^i)\in\real^m$
and $s_i(0)\defin\sign(y^i(0))$, and executes the following iterations
\begin{subequations}
  \begin{align}
    y^i(k+1)=&\,y^i(k)+\cstep\sum_{j=1}^N\adj_{ij,t} (y^j(k)-y^i(k))
    ,\label{eq:standard-consensus}
    \\
    s_i(k+1)=&\,s_i(k) +\cstep\sum_{j=1}^N\adj_{ij,t}
    \big(\sign(y^j(k))\notag \\
    &\,\qquad\qquad\qquad\qquad-\sign(y^i(k))\big) ,
  \end{align}
\end{subequations}
until an iteration $k^*_i$ such that $Ns_i(k^*_i)\le -(N-1)$; see
Lemma~\ref{le:termination-condition} below for the justification of
this termination condition.  Then, agent $i$ re-initializes
$y^i(0)=y^i(k^*)$ and iterates
\begin{align}\label{eq:max-agreement}
  y^i(k+1)=\min \setdef{y^j(k)}{j\in\Nout(i)\cup \{i\}}    
\end{align}
(where agent $i$ does not need to know if a neighbor has
re-initialized).  The agents reach agreement about
$\min_{i\in\until{n}} y^i(0)=\min_{i\in\until{n}} y^i(k^*)$ in a
number of iterations no greater than $(N-1)B$ counted after
$k^{**}\defin\max_{j\in\until{N}} k^*_j$ (which can be computed if
each agent broadcasts once $k_i^*$). Therefore, the agents obtain the
same lower bounds
% $L^i\in\real^m$ and $\gamma_i\in\real$ defined as
\begin{align*}
  % L^i
  \hat{y}\defin&\, N y^i(k^{**})\le \sum_{i=1}^N g^i(\tilde{w}^i) ,
  \\
  % \lowergamma^i
  \lowergamma\defin&\,\min_{l\in\until{m}} -\hat{y}_l\le \gamma ,
\end{align*}
where the first lower bound is coordinate-wise.
\begin{enumerate}
\item[(iii)] The agents exactly agree on $\max_{j\in\until{N}}
  f^j(\tilde{w}^j)$ and {\color{black}
    $\min_{j\in\until{N}}q^j(\bar{z})$ } using the finite-time
  algorithm analogous to~\eqref{eq:max-agreement}.
\end{enumerate}
In summary, the agents obtain the same upper bound
%\begin{align*}
%  % \bounddual^i
%  r\defin\frac{N}{\lowergamma}\max_{j\in\until{N}} f^j(\tilde{w}^j)
%  \ge\frac{N}{\gamma}\max_{j\in\until{N}} f^j(\tilde{w}^j) ,
%\end{align*}
\begin{align*}
  % \bounddual^i
  r\defin\frac{N}{\lowergamma}\Big(\max_{j\in\until{N}}
  f^j(\tilde{w}^j)-\!\min_{j\in\until{N}} q^j(\bar{z})\Big),
\end{align*}
which, according to~\eqref{eq:compact-set-multipliers}, bounds the
optimal dual set for the constrained
optimization~\eqref{eq:problem-constraints-wo-dnm},
\begin{align*}
  \dualoptset\subseteq\zsetcom\subseteq\ballc{0}{r} .
\end{align*}
To conclude, we justify the termination condition of step~(ii).

\begin{lemma}\longthmtitle{Termination condition of step
    (ii)}\label{le:termination-condition}
  If each agent knows the size of the network $N$, then under the same
  assumptions on the communication graphs and the parameter $\cstep$
  as in~Theorem~\ref{th:convergence-general-saddle-point-w-d-mu-z},
  the termination time $k^*_i$ is {\color{black} finite.}
\end{lemma}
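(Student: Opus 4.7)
The plan is to decouple the analysis into two phases: first, a standard consensus analysis that drives each estimate $y^i(k)$ to a strictly negative componentwise limit (by virtue of the Strong Slater condition); second, once the signs $\sign(y^i(k))$ have stabilized at $-\ones_m$, an argument showing that $s_i(k)$ reaches the termination threshold $-(N-1)/N$ componentwise in finite time.

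First, I would apply Proposition~\ref{prop:cum-disagreement-projected-subg-saddle} to the update~\eqref{eq:standard-consensus} with zero disturbance. Under the stated hypotheses ($B$-joint connectivity, $\degn$-nondegeneracy, weight-balance, and $\cstep$ chosen as in~\eqref{eq:eta-cstep-condition}), this yields geometric decay of $\norm{\lapkb y(k)}$. Since the digraphs are weight-balanced, $\ones\tp\tlap = 0$ and the network average is preserved along the dynamics, so $\frac{1}{N}\sum_{j=1}^N y^j(k) = \bar{y}\defin \frac{1}{N}\sum_{j=1}^N g^j(\tilde{w}^j)$ for all $k$. Combining these two facts, $y^i(k)\to \bar{y}$ componentwise as $k\to\infty$ for every $i$. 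By the Slater condition $\sum_j g^j(\tilde{w}^j)<0$ (componentwise), so $\bar{y}<0$ (componentwise) and there exists a finite $k_0$ such that $y^i(k)<0$ componentwise for all $i$ and all $k\ge k_0$. Hence $\sign(y^i(k)) = -\ones_m$ for every $i$ and every $k\ge k_0$.

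Second, for $k\ge k_0$ the sign inputs entering the $s_i$ update have stabilized to the common value $-\ones_m$, and a consensus-type argument analogous to the first phase, applied to the $s_i$ dynamics over the same jointly-connected weight-balanced sequence, drives $s_i(k)$ toward $-\ones_m$ componentwise. Because each agent knows $N$ and therefore the threshold $-(N-1)/N > -1$, by continuity of the trajectories there is a finite $k^*_i\ge k_0$ at which $s_i(k^*_i)\le -(N-1)/N\,\ones_m$ first holds, i.e. $Ns_i(k^*_i)\le -(N-1)\ones_m$. This establishes the finiteness of $k^*_i$.

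The main obstacle is the second phase: during the transient $k<k_0$, $\sign(y^j(k))$ can switch as individual $y^j(k)$ cross zero on their way to the (common) negative limit, so the trajectory of $s_i$ is not monotone. The key technical point is to show that the number of such sign switches is finite (it is bounded by the number of zero-crossings of the finitely many scalar components of the converging signals $y^j$), so the cumulative transient excursion of $s_i$ is bounded, and the stabilized-sign regime of step three is indeed eventually entered and yields the termination condition in finite additional time.
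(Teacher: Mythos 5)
Your first phase is sound and coincides with the paper's starting point: by weight balance the network average of the $y^i$ is invariant along~\eqref{eq:standard-consensus}, it is strictly negative componentwise by the Slater construction, and exponential consensus (your appeal to Proposition~\ref{prop:cum-disagreement-projected-subg-saddle} with zero disturbance is fine) yields a finite $k_0$ after which $\sign(y^i(k))=-\ones_m$ for every~$i$. The gap is in your second phase. The $s_i$-recursion is not a consensus iteration in the variables $s_i$: its increment is $\cstep\sum_{j=1}^N \adj_{ij,t}\big(\sign(y^j(k))-\sign(y^i(k))\big)$, which involves only the signs of the $y$'s and not the differences $s_j(k)-s_i(k)$. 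Hence, precisely in the regime $k\ge k_0$ on which your argument relies, the increment is identically zero and $s_i(k)$ freezes at $s_i(k_0)$; there is no ``consensus-type argument'' that drives $s_i$ toward $-\ones_m$ after the signs have stabilized. Moreover, weight balance makes $\sum_{i=1}^N s_i(k)$ invariant (it equals $\sum_{i=1}^N\sign(y^i(0))$ for all $k$, componentwise), so having every $s_i$ converge to $-\ones_m$ is impossible whenever some component of some $y^i(0)$ is nonnegative. Whatever decrease of $s_i$ occurs must therefore be produced during the transient, by the sign disagreements themselves; your closing paragraph acknowledges the transient but still defers the conclusion to the stabilized-sign regime, so the termination condition $Ns_i(k^*_i)\le-(N-1)$ is never actually established.

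This is also where your route departs from the paper's proof, which invokes no post-transient dynamics at all: it argues that, because~\eqref{eq:standard-consensus} converges exponentially to the strictly negative average~\eqref{eq:point-convergence-consensus-appendix} and Laplacian averaging preserves the convex hull of the initial conditions, the sign-disagreement forcing accumulated along the transient makes $s_i$ decrease (the paper claims monotonically) toward $-1$, with the threshold reached in a finite time governed by $B$ and $\degn$. A correct completion of your argument would likewise have to quantify the contribution of the transient sign disagreements to $s_i$, rather than rely on convergence of $s_i$ after the signs agree, since at that point the update has already shut off.
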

\begin{proof}
  Note that $y^i(0)$ is not guaranteed to be negative but, by
  construction of each $\{g^i(\tilde{w}^i)\}_{i=1}^N$ in step (i), it
  holds that the convergence point for~\eqref{eq:standard-consensus}
  is
  \begin{align}\label{eq:point-convergence-consensus-appendix}
    \frac{1}{N}\sum_{i=1}^N y^i(0)=\frac{1}{N}\sum_{i=1}^N
    g^i(\tilde{w}^i)<0 .
  \end{align}
  % (towards which the estimates $y^i(k)$ converge exponentially
  % fast).
  % Since the estimates $y^i(k)$ in~\eqref{eq:standard-consensus}
  % converges exponentially fast...
  This, together with the fact that Laplacian averaging preserves the
  convex hull of the initial conditions, it follows (inductively) that
  $s_i$ decreases monotonically to $-1$. Thanks to the exponential
  convergence of~\eqref{eq:standard-consensus} to the
  point~\eqref{eq:point-convergence-consensus-appendix}, it follows
  that there exists a finite time $k^*_i\in\integerspositive$ such
  that $Ns_i(k^*_i)\le -(N-1)$. This termination time is determined by
  the constant $B$ of joint connectivity and the constant $\degn$ of
  nondegeneracy of the adjacency matrices.
\end{proof}
% %
{\color{black} The complexity of the entire procedure corresponds to
  \begin{itemize}
  \item each agent computing the minimum of two convex functions;
  \item executing Laplacian average consensus until the agents'
    estimates fall within a centered interval around the average of
    the initial conditions; and
  \item running two agreement protocols on the minimum of quantities
    computed by the agents.
  \end{itemize}
}
% \begin{remark}\longthmtitle{Time complexity}
%   The time complexity is comprised of two agreement procedures on a
%   minimum, plus the finite convergence of exponential consensus to a
%   certain ball.  $2(N-1)B+\text{"finite time convergence of
%   exponential consensus to a ball"}$
%	
%   The computation and communication complexity correspond to compute
%   a local minimum per agent, one averaging process over sparse
%   graphs, and two agreements on a minimum.
%\end{remark}

\section{Simulation example}

{\color{black} Here we simulate\footnote{The Matlab code is available
    at
    \url{https://github.com/DavidMateosNunez/Consensus-based-Saddle-Point-Subgradient-Algorithm.git}.}
  the performance of the Consensus-based Saddle-Point (Sub-) Gradient
  algorithm
  (cf. Algorithm~\ref{alg:saddle-point-constrained-optimization}) in a
  network of $N=50$ agents whose communication topology is given by a
  fixed connected small world graph~\cite{DJW-SHS:98} with maximum
  degree~$\doutmaxT=4$.  Under this coordination strategy, the~$50$
  agents
%~\eqref{eq:minmax-dyn-constrained-agent-updates}
solve collaboratively the following instance of
problem~\eqref{eq:problem-linear-inequality-constraints} with
nonlinear convex constraints:
\begin{align}\label{eq:problem-simulation-log-constraints}
  \min_{w_i\in[0, 1]}&\,
  % \sum_{i=1}^N \frac{c_i}{2} (w_i-l_i)^2 \notag
  \sum_{i=1}^{50} c_i w_i \notag
  \\
  \rm{s.t.}&\, \sum_{i=1}^{50} -d_i \log(1 + w_i)\le -b .
\end{align}
Problems with constraints of this form arise, for instance, in
wireless networks to ensure quality-of-service.  For each
$i\in\until{50}$, the constants $c_i$, $d_i$ are taken randomly from a
uniform distribution in~$[0, 1]$, and $b=5$.  We compute the solution
to this problem, to use it as a benchmark, with the Optimization
Toolbox using the solver \textit{fmincon} with an \textit{interior
  point} algorithm.  Since the graph is connected, it follows that
$B=1$ in the definition of joint connectivity. Also, the constant of
nondegeneracy is~$\degn=0.25$ and $\sigmamax(\lap)\approx1.34$.  With
these values, we derive from~\eqref{eq:eta-cstep-condition} the
theoretically feasible consensus stepsize $\cstep=0.2475$.  For the
projection step in~\eqref{eq:minmax-dyn-constrained-agent-projection}
of the C-SP-SG algorithm, the bound on the optimal dual
set~\eqref{eq:compact-set-multipliers}, using the Slater vector
$\tilde{w}=\ones_N$ and $\zbar=0$, is
\begin{align*}
  r=\frac{N \max_{j\in\until{N}} c_j}{\log(2)\sum_{i=1}^N d_i -N/10} =
  3.313 .
\end{align*}
% which is more conservative than the bound
% in~\eqref{eq:bound-Asu-multipliers} that has numerator $N
% \sum_{i=1}^N c_i$.
%
% \margind{Jorge, I decided that the above observartion is anecdotal
% here. This was explained in the theoretical developments and is
% distracting here.}
%  
For comparison, we have also simulated the Consensus-Based Dual
Decomposition (CoBa-DD) algorithm proposed in~\cite{AS-HJR:15} using
(and adapting to this problem) the code made available online by the
authors\footnote{The Matlab code is available at
  \url{http://ens.ewi.tudelft.nl/~asimonetto/NumericalExample.zip}.}. (The
bound for the optimal dual set used in the projection of the estimates
of the multipliers is the same as above.)
%
%\margind{Jorge, it is not the same code, since I adapted it for other objective functions and used the same bound we obtain for the optimal dual set. I added the last sentence.}
%%
%%
%\margin{David, can you add also the url pointer here?}
%%
%%
%\margin{A: Jorge, perfect}
%%
% see~\cite[Fig. 1]{AS-HJR:15} for a scenario with \textit{linear}
% constraints.)
We should note that the analysis in~\cite{AS-HJR:15} only considers
constant learning rates, which necessarily results in steady-state
error in the algorithm convergence.

We have simulated the C-SP-SG and the CoBa-DD algorithms in two
scenarios: under the Doubling Trick scheme of
Assumption~\ref{ass:Doubling-trick} (solid blue and magenta dash-dot
lines, respectively), and under constant learning rates equal
to~$0.05$ (darker grey) and~$0.2$ (lighter
grey). Fig.~\ref{fig:log_constraint} shows the saddle-point evaluation
error for both algorithms.  The saddle-point evaluation error of our
algorithm is well within the theoretical bound established in
Corollary~\ref{cor:app-constrained-opt}, which for this optimization
problem is approx. $1.18\times 10^9/\sqrt{t}$.  (This theoretical
bound is overly conservative for connected digraphs because the
ultimate bound for the disagreement~$\consissu$
in~\eqref{eq:def-constant-disagreement}, here
$\consissu\approx3.6\times 10^6$, is tailored for sequences of
digraphs that are $B$-jointly connected instead of relying on the
second smallest eigenvalue of the Laplacian of connected graphs.)
%
% \margind{Jorge, what do you think of adding at least the last sentence
%   to compare $B$-joint connectivity to the counterpart we have in
%   mind?.}
% \margin{Sure}
%
% compare the analyses of the input-to-state stability of the
% disagreement in~\cite[Prop. V.4]{DMN-JC:14-tnse} --analogous to our
% case-- with~\cite[Prop. V.4]{DMN-JC:14-mtns} that relies on a
% uniform bound on the \textit{algebraic connectivity} --i.e., second
% smallest eigenvalue of the Laplacian-- of \textit{connected}
% digraphs, which can be switching.)  Although convergence to a saddle
% point of the Lagrangian is a legitimate notion of convergence to the
% solution of the optimization problem, we also illustrate other
% metrics.
Fig.~\ref{fig:log_constraint_cost_and_constraint_evolution} compares
the network cost-error and the constraint satisfaction.
%%
% \margin{David, instead of plotting the distance to the optimal
% saddle point, we just write here in the text what the steady-state
% error is, in the case of constant stepsize.}
%%
We can observe that the C-SP-SG and the CoBa-DD~\cite{AS-HJR:15}
algorithms have some characteristics in common:
\begin{itemize}
\item They both benefit from using the Doubling Trick scheme.
\item They approximate the solution, in all metrics of
  Fig.~\ref{fig:log_constraint} and
  Fig.~\ref{fig:log_constraint_cost_and_constraint_evolution} at a
  similar rate.  Although the factor in logarithmic scale of the
  C-SP-SG algorithm is larger, we note that this algorithm does not
  require the agents to solve a local optimization problem at each
  iteration for the updates of the primal variables, while both
  algorithms share the same communication complexity.
%  %
%  \margind{Jorge, I made precise below the scheme used}
%  %
\item The empirical convergence rate for the saddle-point evaluation
error under the Doubling Trick scheme is of order~$1/\sqrt{t}$ (logarithmic slope~$-1/2$), while the
empirical convergence rate for the cost error under constant learning rates is of order~$1/t$
(logarithmic slope~$-1$). This is consistent with the theoretical
results here and in~\cite{AS-HJR:15} (wherein the theoretical bound
concerns the practical convergence of the cost error using constant
learning rates).
\end{itemize}
 
% %
% \margind{David: is the error rate different for the cost error and
% for the saddle-point evaluation error}
% 
%\margind{David: if we do changes, update the number of agents, the
%  consensus stepsize, the average degree, the bound on the optimal
%  dual set, and the constant learning rates, and b}
%

%The Slater vector $\tilde{w}=\ones_N$ yields 
%$\gamma=\log(2)\sum_{i=1}^N d_i -N/10$, 
%%
%$\max_{j\in\until{N}} f^j(\tilde{w}^j)=\max_{j\in\until{N}} c_j$, 
%%
%and, choosing $\bar{z}=0$, $\min_{j\in\until{N}} q^j(0)=0$. I.e., the
% bound in~\eqref{eq:compact-set-multipliers} gives
% $r=\frac{N}{\log(2)\sum_{i=1}^N d_i -N/10}(\max_{j\in\until{N}} c_j)
% $. Note that this is a more conservative bound than
% for~\eqref{eq:bound-Asu-multipliers}, which yields $r=
% \frac{N}{\log(2)\sum_{i=1}^N d_i -N/10}(\sum_{i=1}^N c_i)$ (where we
% have also used that the evaluation of the dual function $q(0)=0$).

%For comparison purposes we also simulate the trajectories of the Consensus-Based Dual Decomposition (CoBa-DD) algorithm in~\cite{AS-HJR:15} (with constant learning rate for the multipliers equal to $0.1$), and the Consensus-Based Primal-Dual Perturbation (CoBa-PDP) method in~\cite{THC-AN-AS:14} (with $\gradstept=\frac{0.1}{10+t}$ and $\rho_1=\rho_2=10^{-3}$).
\begin{figure}[bth]
  \centering
  {\includegraphics[width=.9\linewidth]{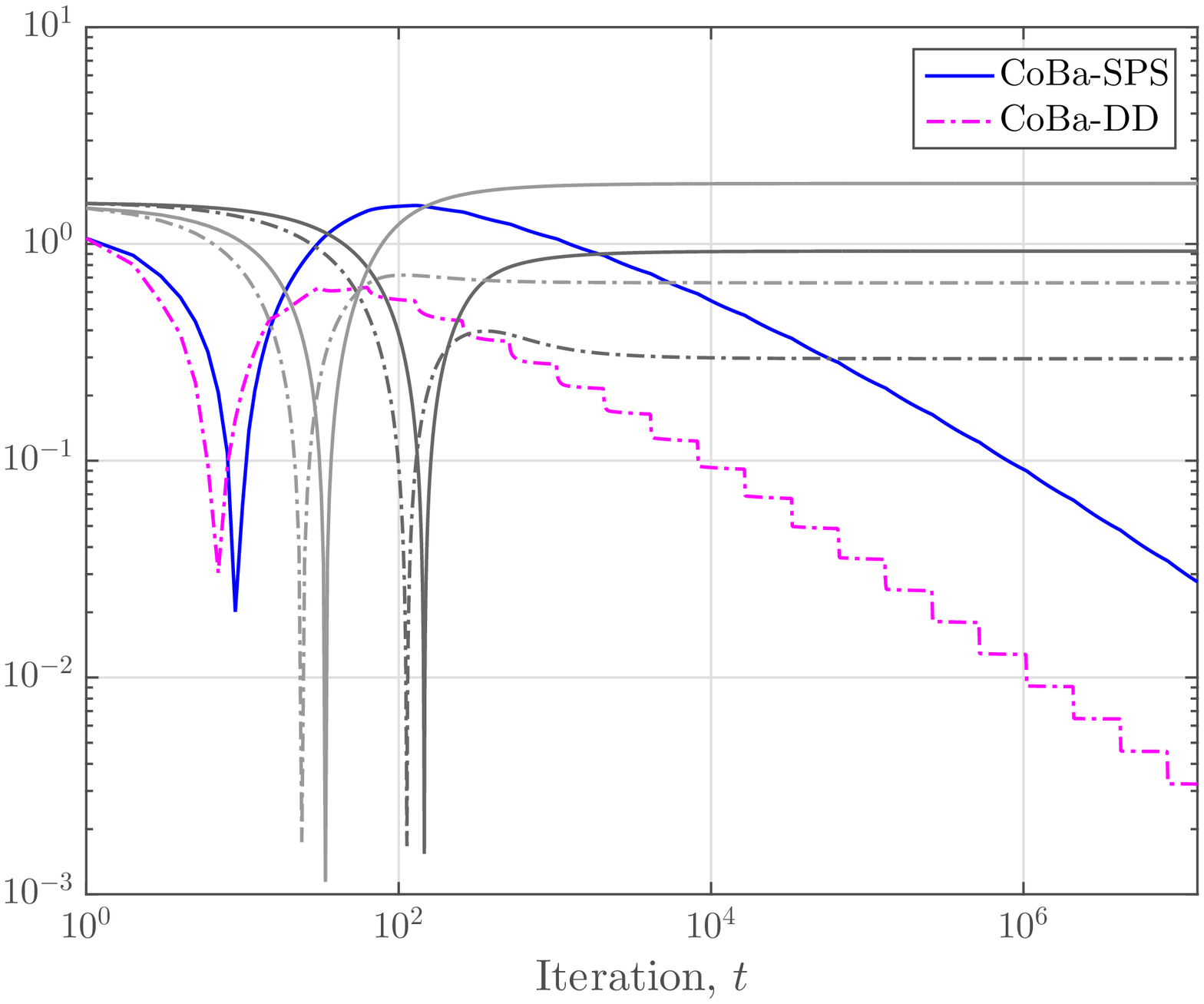}}
  \begin{picture}(0,0)(0,0)
  %\put(-55,130){$\absolute{\sum_{i=1}^N c_iw_i- \sum_{i=1}^N c_iw_i^* }$}  
  \end{picture}
  \caption{\color{black} Saddle-point evaluation error
    $\absolute{\phicc(\wntav,\zntav) -\lag(\wnstarwo, z^*)}$.
    % (For the
    % CoBa-DD algorithm in~\cite{AS-HJR:15} there is no noticeable
    % difference between evaluating this quantity at the estimates of
    % the multipliers or at their running time-averages.)
    The lines in grey represent the same algorithms simulated with
    constant learning rates equal to $0.2$ (lighter grey) and $0.05$
    (darker grey), respectively.}\label{fig:log_constraint}
\end{figure}

% \margin{I'm curious as to why the peaks in both sets of figures? The
%   algos are doing very well, all of a suddent start to perform worse?}
% %
% \margind{A: Jorge, this can be observed also in the simulation~\cite[Fig. 1]{AS-HJR:15}. I suppose, since the optimization problem is in one dimension, that the agents agree and happen to cross the solution by chance.}
%
\begin{figure}[bth]
  \centering%s
  \subfigure[Cost
  error]{\includegraphics[width=.9\linewidth]{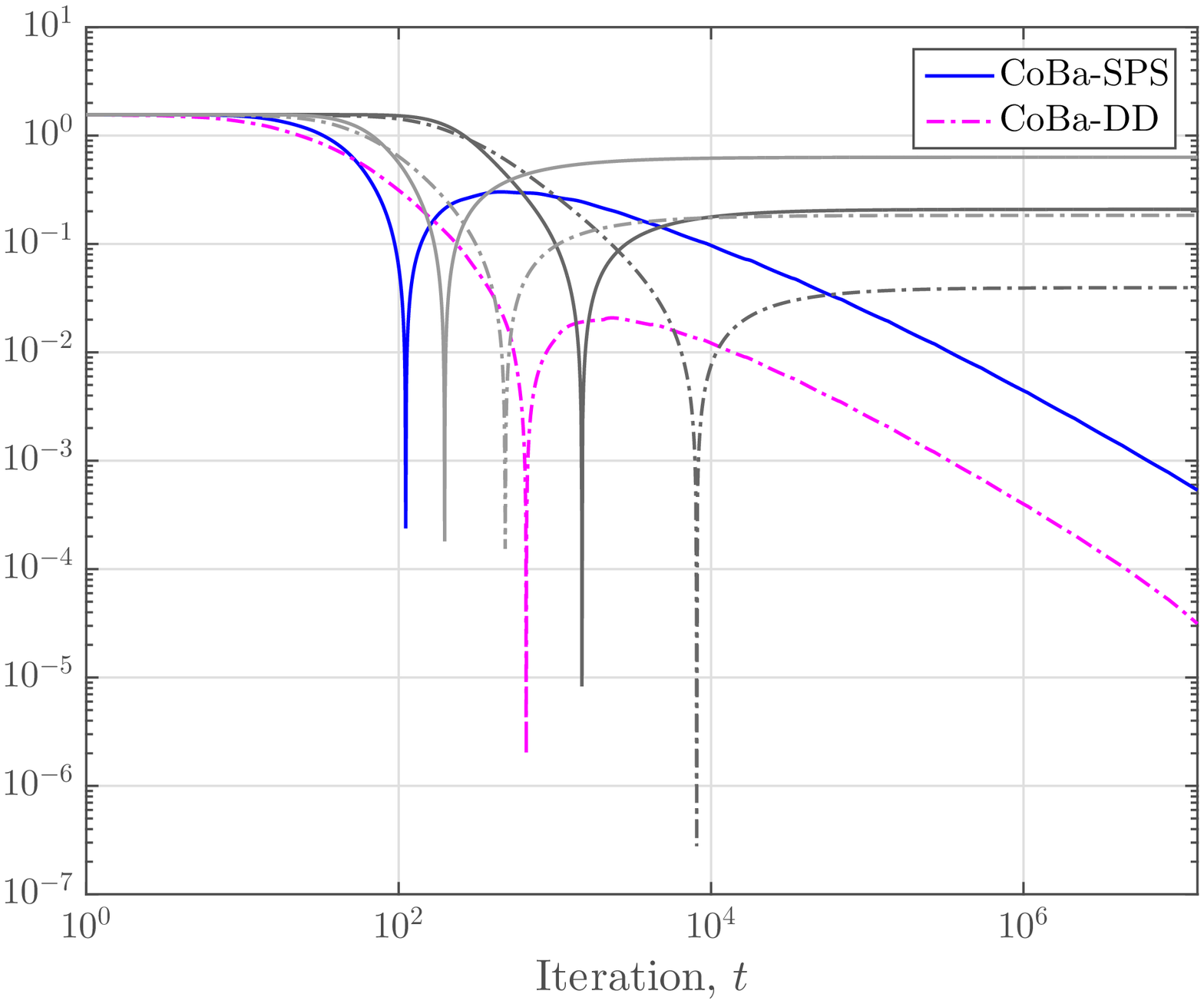}}
  \begin{picture}(0,0)(0,0)
    % \put(-55,130){$\absolute{\sum_{i=1}^N c_i \witav- \sum_{i=1}^N c_iw_i^* }$}  
  \end{picture}
  \subfigure[Constraint satisfaction]
  {\hspace{-4pt}\includegraphics[width=.9\linewidth]{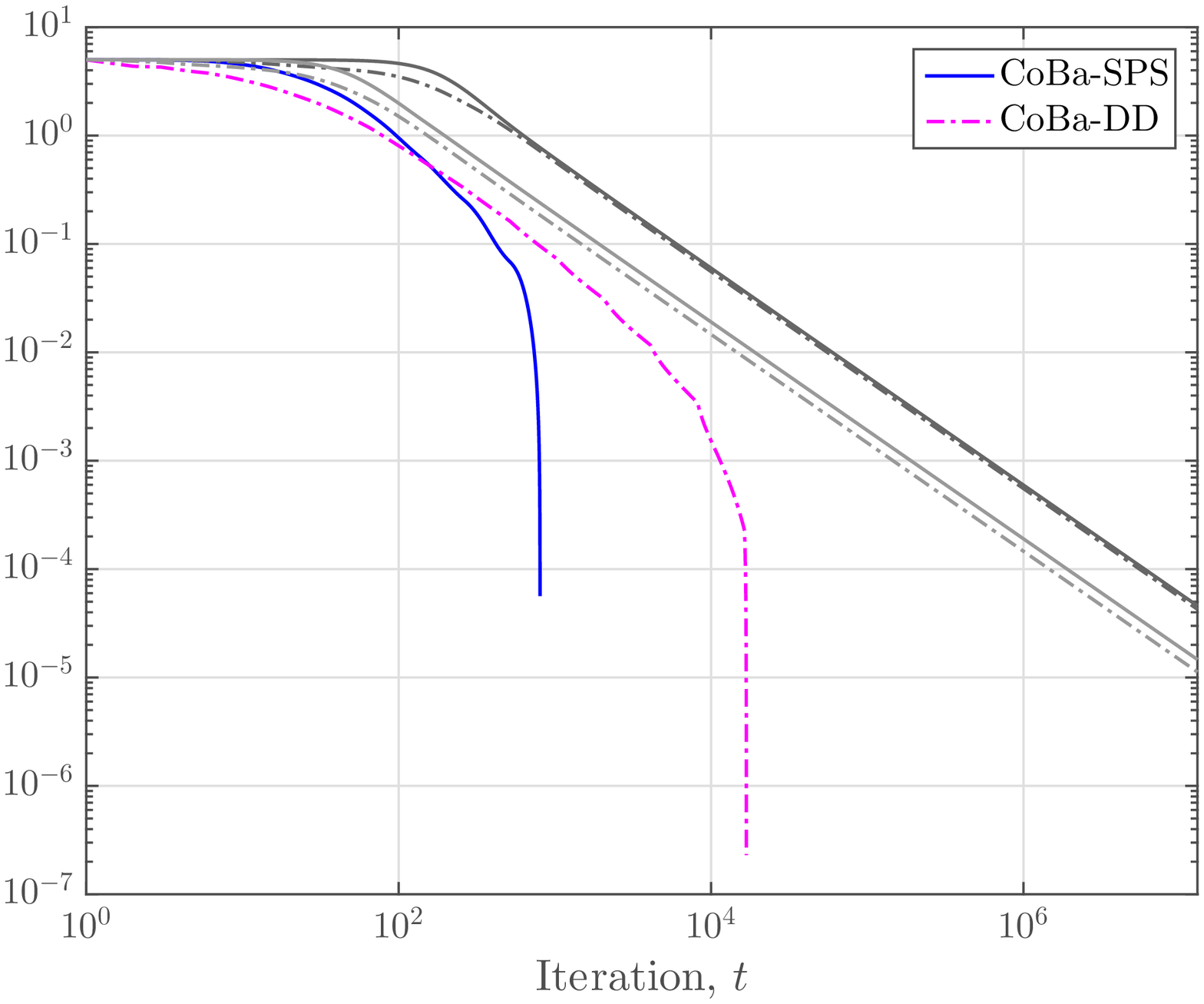}}
  % \begin{picture}(0,0)(0,0)
  % \put(-80,42){$-\sum_{i=1}^{N}  d_i \log(1 +  \witav) +b$} 
  % \end{picture}
  \caption{\color{black} Cost error and constraint satisfaction.  For
    the same instantiations as in Fig.~\ref{fig:log_constraint}, (a)
    represents the evolution of the network cost
    error~$\absolute{\sum_{i=1}^N c_i \witav- \sum_{i=1}^N c_iw_i^*
    }$, and (b) the evolution of the network constraint satisfaction
    $-\sum_{i=1}^{N} d_i \log(1 + \witav) +b$.
    % 
    %% (Only sometimes!)We report that the difference in performance is almost negligible when our algorithm is simulated with the same constant learning rate $0.1$ instead of using the Doubling Trick.
    % (although in this case the convergence would not be asymptotically exact, the same as for CoBa-DD in~\cite{AS-HJR:15}). 
    % This is the case despite the fact that our algorithm does not require to solve a local optimization for the agents' primal-variable updates at each iteration.
  }\label{fig:log_constraint_cost_and_constraint_evolution}
\end{figure}

}

\section{Conclusions and ideas for future work}

We have studied projected subgradient methods for saddle-point
problems under explicit agreement constraints.  We have shown that
separable constrained optimization problems can be written in this
form, where agreement plays a role in making distributed both the
objective function (via agreement on a subset of the primal variables)
and the constraints (via agreement on the dual variables). This
approach enables the use of existing consensus-based ideas to tackle
the algorithmic solution to these problems in a distributed fashion.
% %
% \margind{Jorge, regarding the following, not really. I am sure the
% extension of our algorithm is straightforward but, as it is, it
% cannot be said that the semidefinite case is a particular case of
% constraints that are sum of convex functions. If you write the LMIs
% using minors, for instance, the set my be the same but each
% constraint separately may not be even convex (although the
% intersection of sets is) or the components in the sum may not be
% convex!}
%
% We have illustrated how the general saddle-point formulation adopted
% in this paper encompasses optimization problems with separable and
% semidefinite constraints.
Future extensions will include, first, a refined analysis of
convergence for constrained optimization in terms of the cost
evaluation error instead of the saddle-point evaluation error.
Second, more general distributed algorithms for computing bounds on
Lagrange vectors and matrices, which are required in the design of
truncated projections preserving the optimal dual sets. (An
alternative route would explore the characterization of the intrinsic
boundedness properties of the proposed distributed dynamics.)
{\color{black} Third, the selection of other learning rates that
  improve the convergence rate of our proposed algorithms.}  Finally,
we envision applications to semidefinite programming, where chordal
sparsity allows to tackle problems that have the dimension of the
matrices grow with the network size, and also the treatment of
low-rank conditions.  {\color{black} Particular applications will
  include efficient optimization in wireless networks, control of
  camera networks, and estimation and control in smart grids.  }
% In addition, current techniques could be extended for the analysis
% of local projections onto \emph{different} sets concerning the
% variables constrained to agreement, in cases where the desired
% saddle-point belongs to the intersection of the sets.
 
% to implement the local projections that ensure boundedness of the
% estimates.
% 
% Finally, regret analysis in the online setup.  Regarding the
% applications, we are interested in exploiting this framework for
% networked problems with semidefinite constraints that do not depend
% on the size of the network.  Otherwise, in applications like the
% study of optimal power flow through convex relaxations, the
% treatment of linear matrix inequalities with a dimension that
% depends on the order of the network presents challenges in the
% communication of (large) Lagrange multiplier matrices and on their
% projections onto the positive semidefinite cone.

\section*{Acknowledgments}
The authors thank the anonymous reviewers for their useful feedback
that helped us improve the presentation of the paper. This research
was partially supported by NSF Award CMMI-1300272.

%\bibliographystyle{ieeetr} %
%\bibliography{alias,Main,Main-add,JC} 

\appendix

\newcounter{mycounter}
\renewcommand{\themycounter}{A.\arabic{mycounter}}
\newtheorem{definitionappendix}[mycounter]{Definition}
\newtheorem{lemmaappendix}[mycounter]{Lemma}
\newtheorem{theoremappendix}[mycounter]{Theorem}

\section{Appendix}\label{app}

{\color{black} Here we present the proofs of the results
  Lemma~\ref{le:basic-bound-convex-concave-function-differences} and
  Proposition~\ref{prop:cum-disagreement-projected-subg-saddle} stated
  in Section~\ref{sec:convergence-analysis}.  }

\begin{proof}[Proof of Lemma~\ref{le:basic-bound-convex-concave-function-differences}]
  In this proof we extend the saddle-point analysis for the
  (centralized) subgradient methods in~\cite[Lemma 3.1]{AN-AO:09-jota}
  %% 
  % \margind{check numbering of Lemma in this and other contribution in Siam}
  %% 
  by incorporating the treatment on the disagreement from our previous
  work in~\cite[Lemma V.2]{DMN-JC:14-tnse}.  We first define
  % in~\cite{FY-SS-SVNV-YQ:13},
  \begin{align}\label{eq:def-residual-projection-wn-xn}
    \begin{bmatrix}
      \rnwtp
      \\
      \rnxtp
    \end{bmatrix}
    \defin
    \begin{bmatrix}
      \wntp-\wntphat
      \\
      \dnmtp-\dnmtphat
    \end{bmatrix}.
  \end{align}
  Since $\identity_d-\step\tlap$ is a stochastic matrix (because
  $\cstep$~satisfies~\eqref{eq:upper-cons-cstep}),
  % $\cstep<\frac{1}{\doutmax}$,
  then its product by any vector is a convex combination of the
  entries of the vector. Hence, the fact that $\dnmt\in\dnmset$
  %% 
  % {\color{red}the fact that $\dnmt\in\dnmsetdif$}
  %% 
  implies that $\dnmt-\cstep\tlapbk\dnmt\in\dnmset$.
  {\color{black}Using this together with the definition of orthogonal
    projection~\eqref{eq:def-projection}, we get}
  \begin{align}\label{eq:bound-residual-projec}
    \norm{\rnxtp}=&\, \norm{\projec{\dnmset}{\dnmtphat}-\dnmtphat}
    \notag
    \\
    \le&\,\norm{(\dnmt-\cstep\tlapbk\dnmt)-\dnmtphat}
    =\gradstept\norm{\subgdnmt}.
  \end{align}

  Similarly, since $\wnt\in\wnsetan$, we also have
  \begin{align*} %\label{eq:bound-residual-projec}
    \norm{\rnwtp}=&\, \norm{\projec{\wnsetan}{\wntphat}-\wntphat}
    \\
    \le&\,\norm{\wnt-\wntphat} =\gradstept\norm{\subgwnt}.
  \end{align*}
  Left-multiplying the dynamics of $\wnt$ and $\dnmt$
  from~\eqref{eq:minmax-dyn-wn} and~\eqref{eq:minmax-dyn-xn} (in terms
  of the residual~\eqref{eq:def-residual-projection-wn-xn}) by the
  block-diagonal matrix $\diag(\identity_{Nd},\mbk)$, and using
  $\mbk\tlapbk=0$, we obtain
  \begin{align}\label{eq:dynamics-projected-agreement-agents-decisions}
    \begin{bmatrix}
      \wntp
      \\
      \mbk\dnmtp
    \end{bmatrix}
    =
    \begin{bmatrix}
      \wnt
      \\
      \mbk\dnmt
    \end{bmatrix}+
    \begin{bmatrix}
      -\gradstept\subgwnt+\rnwtp
      \\
      -\gradstept\mbk\subgdnmt+\mbk\rnxtp
    \end{bmatrix}.
  \end{align}
  Subtracting $(\wnp, \dnmp)\in\wnsetan\times\dnmset$
  %  % 
  % {\color{red}$(\wnp, \dnmp)\in\wnsetan\times\dnmsetdifb$}
  %  % 
  on each side, taking the norm, and noting that $\mbk\tp=\mbk$ and
  $\mbk^2=\mbk$,
  % {\color{red} and $\mbk\dnmp=\dnmp$},
  we get
  \begin{align}\label{eq:difference-square-norm-first-step}
    &\norm{\wntp-\wnp}^2+ \norm{\mbk\dnmtp-\dnmp}^2
    \\
    = &\,\norm{\wnt-\wnp}^2+\norm{\mbk\dnmt-\dnmp}^2 \notag
    \\
    &\,+\norm{-\gradstept \subgwnt+\rnwtp}^2 +\norm{-\gradstept
      \mbk\subgdnmt+\mbk\rnxtp}^2 \notag
    \\
    &\!-2\gradstept\subgwnt\tp(\wnt-\wnp)
    -2\gradstept\subgdnmt\tp(\mbk\dnmt-\mbk\dnmp) \notag
    \\
    &\, +2\rnwtp\tp(\wnt-\wnp) +2\rnxtp\tp(\mbk\dnmt-\mbk\dnmp).
    \notag
  \end{align}
  We can bound the term $-\subgdnmt\tp(\mbk\dnmt-\mbk\dnmp)$ by
  subtracting and adding $\dnmt-\dnmp$ inside the bracket and using
  convexity,
  % {\color{red}as follows:}
  % \begin{multline*}
  \begin{align}\label{eq:convexity-wrt-dnmt}
    &\,-\subgwnt\tp(\wnt-\wnp) -\subgdnmt\tp(\mbk\dnmt-\mbk\dnmp)
    \\
    =&\, -\subgdnmt\tp(\mbk\dnmt-\dnmt) -\subgdnmt\tp(\dnmp-\mbk\dnmp)
    \nonumber
    \\
    &\, -
    \begin{bmatrix}
      \subgwnt\tp
      &
      \subgdnmt\tp
    \end{bmatrix}
    \begin{bmatrix}
      \wnt-\wnp
      \\
      \dnmt-\dnmp
    \end{bmatrix}
    \nonumber
    \\
    \le &\; \subgdnmt\tp \lapkbk\dnmt-\subgdnmt\tp \lapkbk\dnmp
    \nonumber
    \\
    &\,+\phicc(\wnp,\dnmp,\munt,\znt)-\phicc(\wnt,\dnmt,\munt,\znt)\,,
    \nonumber
    % \\
    % &\,-\tfrac{\pxw(\wnp,\dnmp,\wnt,\dnmt)}{2}
    % \big(\norm{\wnp-\wnt}^2
    % +\norm{\dnmp-\dnmt}^2\big),
    % \nonumber
  \end{align}
  % \end{multline*}
  where we have used $\lapkbk = \identity_{N\dimdn}-\mbk$ and the
  fact that $\subgwnt\in\partial_{\wn}\phicc(\wnt,\dnmt,\munt,\znt)$
  and
  $\subgdnmt\in\partial_{\dnm}\phicc(\wnt,\dnmt,\munt,\znt)$. Using
  this bound and \eqref{eq:difference-square-norm-first-step}, we
  get
  \begin{align}\label{eq:bound-lagdnmt-lagdnm}
    &\,
    2(\phicc(\wnt,\dnmt,\munt,\znt)-\phicc(\wnp,\dnmp,\munt,\znt))
    \\
    \le&\, \tfrac{1}{\gradstept} \big(\norm{\wnt-\wnp}^2
    -\norm{\wntp-\wnp}^2\big) \notag
    \\
    &\, +\tfrac{1}{\gradstept} \big(\norm{\mbk\dnmt-\dnmp}^2
    -\norm{\mbk\dnmtp-\dnmp}^2\big) \notag
    \\
    % &\,-\pxw(\wnp,\dnmp,\wnt,\dnmt)
    % \big(\norm{\wnp-\wnt}^2
    % +\norm{\dnmp-\dnmt}^2\big) 
    % \notag
    % \\
    &\,+2\subgdnmt\tp \lapkbk\dnmt-2\subgdnmt\tp \lapkbk\dnmp
    \notag
    \\
    &\,+\tfrac{1}{\gradstept}\norm{\!-\!\gradstept \subgwnt+\rnwtp}^2
    +  \tfrac{1}{\gradstept}\norm{\!-\!\gradstept \mbk\subgdnmt+\mbk\rnxtp}^2
    \notag
    \\
    &\,+\tfrac{2}{\gradstept}\rnwtp\tp(\wnt-\wnp)
    +\tfrac{2}{\gradstept}\rnxtp\tp(\mbk\dnmt-\dnmp).
    \notag
  \end{align}
  We now bound each of the terms in the last three lines
  of~\eqref{eq:bound-lagdnmt-lagdnm}. First, using the Cauchy-Schwarz
  inequality, we get
  \begin{align}\label{eq:bound-scalar-product-subg-and-disagreement}
    \subgdnmt\tp \lapkbk\dnmt-\subgdnmt\tp \lapkbk\dnmp
    \le\norm{\subgdnmt}(\norm{\lapkbk\dnmt}+\norm{\lapkbk\dnmp}).
  \end{align}
  For the terms in the second to last line, using the triangular
  inequality, the submultiplicativity of the norm, the fact that
  $\norm{\mbk}\le 1$, and the bound~\eqref{eq:bound-residual-projec},
  we have
  \begin{align}\label{eq:bound-subg-and-rnctp}
    &\,\!\!\!\norm{\!-\!\gradstept \mbk\subgdnmt+\mbk\rnxtp}
    \le\norm{\!-\!\gradstept \mbk\subgdnmt}+\norm{\mbk\rnxtp}
    \nonumber
    \\
    \le&\,\gradstept\norm{\mbk}\norm{\subgdnmt}+\norm{\mbk}\norm{\rnxtp}
    \le 2\gradstept \norm{\subgdnmt},
  \end{align}
  and, similarly, 
  % $\,\norm{\!-\gradstept\subgwnt+\rnwtp}
  % \le 2\gradstept \norm{\subgwnt}$.
  \begin{align*} %\label{eq:bound-subg-and-rnctp}
    &\,\norm{\!-\gradstept\subgwnt+\rnwtp}
    \le 2\gradstept \norm{\subgwnt}.
  \end{align*}
  Finally, regarding the term $\rnxtp\tp(\mbk\dnmt-\dnmp)$, we use the
  definition of $\rnxtp$ and also add and subtract $\dnmtphat$ inside
  the bracket. With the fact that $\mbk\dnmp\in\dnmset$ (because
  $\dmset$ is convex), we
  % inside the bracket,
  {\color{black} leverage the
    property~\eqref{eq:prelims-projection-property} of the orthogonal
    projection to derive the first inequality.  For the next two
    inequalities we use the Cauchy-Schwarz inequality, and then the
    bound in~\eqref{eq:bound-residual-projec} for the residual, and
    also the definition of~$\dnmtphat$, the fact that
    $\mbk\dnmt-\dnmt=-\lapkbk\dnmt$, and the triangular inequality.}
  Formally,
  \begin{align}\label{eq:bound-rnxtp-dnmt-dnmp}
    &\,\rnxtp\tp(\mbk\dnmt-\mbk\dnmp)
    % \nonumber
    % \\
    % =&\,
    =\rnxtp\tp(\mbk\dnmt-\dnmtphat)
    \\
    \nonumber
    &\,+\big(\projec{\dnmset}{\dnmtphat}-\dnmtphat\big)(\dnmtphat-\mbk\dnmp)
    \\
    \nonumber
    \le&\,\rnxtp\tp(\mbk\dnmt-\dnmtphat)
    % \\
    % \nonumber
    % \le&\,
    \le\norm{\rnxtp}\norm{\mbk\dnmt-\dnmtphat}
    \\
    \le&\,\gradstept\norm{\subgdnmt}\,
    \norm{\!-\lapkbk\dnmt+\cstep\tlapb\dnmt+\gradstept\subgdnmt}
    \nonumber
    \\
    \le&\,\gradstept\norm{\subgdnmt}\Big((1+\cstep\lambdaup)\norm{\lapkbk\dnmt}+
    \gradstept\norm{\subgdnmt} \Big), \nonumber
  \end{align}
  where in the last inequality we have also used {\color{black} a bound
    for the term~$\norm{\tlapb\dnmt}$ invoking~$\lambdaup$ that we
    explain next. From the Courant-Fischer min-max
    Theorem~\cite{RAH-CRJ:85} applied to the matrices $\tlap\tp\tlap$
    and $\lapk^2$ (which are symmetric with the same nullspace), we
    deduce that for any $x\in\real^N$,
    \begin{align*}
      \frac{x\tp\tlap\tp\tlap x
      }{\lambdamax(\tlap\tp\tlap)} \le \frac{x\tp\lapk^2
        x}{\lambda_{n-1}(\lapk^2)} ,
    \end{align*}
    where $\lambda_{n-1}(\cdot)$ refers to the second smallest
    eigenvalue, which for the matrix $\lapk^2=\lapk$ is $1$. (Note
    that all its eigenvalues are $1$, except the smallest that is
    $0$.) With the analogous inequality for Kronecker products with
    the identity $\identity_d$, the bound needed to
    conclude~\eqref{eq:bound-rnxtp-dnmt-dnmp} is then
    \begin{align*} %\label{eq:bound-norm-tlapb}
      \norm{\tlapb\dnmt}=&\, \sqrt{\dnmt\tp\tlapb\tp\tlapb\dnmt}
      \\
      \nonumber \le &\, \sqrt{\lambdamax(\tlapb\tp\tlapb)\:\dnmt\tp\lapkb^2\dnmt} =
      \sigmamax(\tlap)\:\norm{\lapkb\dnmt} .
    \end{align*}
  }
  % {\color{red} Alternatively ***********************************}
  % 
  % Finally, regarding the term $\rnxtp\tp(\mbk\dnmt-\dnmp)$, using
  % the Cauchy-Schwarz inequality, the sub-multiplicativity of the
  % norm, $\norm{\mbk}=1$, the triangular inequality, and the bound
  % for~$\norm{\rnxtp}$
  % in~\eqref{eq:bound-residual-projec-intersection}, we obtain
  % \begin{align}\label{eq:bound-rnxtp-dnmt-dnmp-intersection}
  %   &\,
  %   \rnxtp\tp(\mbk\dnmt-\mbk\dnmp)\le\norm{\rnxtp}\norm{\mbk}\norm{\dnmt-\dnmp}
  %   \notag
  %   \\
  %   \le&\, \Big(\big(\tfrac{\varTheta N}{\theta}
  %   +\cstep\lambdaup\big)\norm{\lapkbk\dnmt}+\gradstept\norm{\subgdnmt}
  %   \Big)(\norm{\dnmt}+\norm{\dnmp})
  % \end{align}
  %% 
  % {\color{red} ******************************************************}
  
  Similarly to~\eqref{eq:bound-rnxtp-dnmt-dnmp}, now without the
  disagreement terms,
  \begin{align*} %\label{eq:bound-rnxtp-dnmt-dnmp}
    &\,\rnwtp\tp(\wnt-\wnp)
    \\
    =&\,\rnwtp\tp(\wnt-\wntphat)
    % \\
    % \nonumber
    % &
    \,+\big(\projec{\wnsetan}{\wntphat}-\wntphat\big)(\wntphat-\wnp)
    \\
    \le&\,\rnwtp\tp(\wnt-\wntphat)
    \\
    \le&\, \norm{\rnwtp}\norm{\wnt-\wntphat}\le \gradstept^2\norm{\subgwnt}^2.
    \nonumber
  \end{align*}
  Substituting the
  % {\color{red}\eqref{eq:term-modulus-strong-convexity}},
  bounds~\eqref{eq:bound-scalar-product-subg-and-disagreement},~\eqref{eq:bound-subg-and-rnctp}
  and~\eqref{eq:bound-rnxtp-dnmt-dnmp}, and their counterparts for
  $\wnt$, in~\eqref{eq:bound-lagdnmt-lagdnm}, we obtain
  \begin{align}\label{eq:bound-lagdnmt-lagdnm-substituted}
    &\,
    2(\phicc(\wnt,\dnmt,\munt,\znt)-\phicc(\wnp,\dnmp,\munt,\znt))
    \\
    \le&\, \tfrac{1}{\gradstept} \big(\norm{\wnt-\wnp}^2
    -\norm{\wntp-\wnp}^2\big) \notag
    \\
    &\,+ \tfrac{1}{\gradstept} \big(\norm{\mbk\dnmt-\dnmp}^2
    -\norm{\mbk\dnmtp-\dnmp}^2\big) \notag
    \\
    % &\,-\pxw(\wnp,\dnmp,\wnt,\dnmt)
    % \big(\norm{\wnt-\wnp}^2+\norm{\mbk\dnmt-\dnmp}^2\big)
    % \notag
    % \\
    &\,+2\norm{\subgdnmt}(\norm{\lapkbk\dnmt}+\norm{\lapkbk\dnmp})
    \nonumber +6 \gradstept \norm{\subgwnt}^2 \notag
    \\
    &\,+ 4\gradstept \norm{\subgdnmt}^2
    +2\norm{\subgdnmt}\Big((1+\cstep\lambdaup)\norm{\lapkbk\dnmt}+
    \gradstept\norm{\subgdnmt} \Big) \notag
  \end{align}
  and~\eqref{eq:prop-difference-dnmt-xn} follows.
  % We proceed similarly
  The bound~\eqref{eq:prop-difference-znt-zn} can be derived
  similarly, requiring concavity of~$\phicc$ in~$(\mun,\zn)$.
\end{proof}

%%%%%%%%%% ANOTHER PROOF
\begin{proof}[Proof of Proposition~\ref{prop:cum-disagreement-projected-subg-saddle}]
  Since both dynamics in~\eqref{eq:disagreement-minmax-dyn-wnxnzn} are
  structurally similar, we study the first one,
  \begin{align}\label{eq:consensus-projection-perturn-dnm}
    \dnmtp=\dnmt-\cstep\tlapb\dnmt+\pertdnmt +\rnxtp ,
  \end{align}
  where $\rnxtp$ is as in~\eqref{eq:def-residual-projection-wn-xn} and
  satisfies (similarly to \eqref{eq:bound-residual-projec}) that
  \begin{align*} %\label{eq:bound-residual-projec}
    \norm{\rnxtp}=&\, \norm{\projec{\dnmset}{\dnmtphat}-\dnmtphat}
    \notag
    \\
    \le&\,\norm{(\dnmt-\cstep\tlapb\dnmt)-\dnmtphat}
    =\norm{\pertdnmt}.
  \end{align*}
  % We now use the strategy used in the proof
  % of~\cite[Prop. V.4]{DMN-JC:14-tnse} (which utilizes the important
  % result in~\cite[Th. 1.2]{AN-AO:10}).
  The dynamics~\eqref{eq:consensus-projection-perturn-dnm} coincides
  with that of~\cite[eqn.~(29)]{DMN-JC:14-tnse} where, in the notation
  of the reference, one sets $\inputet\defin\pertdnmt +\rnxtp$.
  Therefore, we obtain a bound analogous
  to~\cite[eqn.~(34)]{DMN-JC:14-tnse},
  \begin{align}\label{eq:norm-lapkb-rhos-dnm}
    &\,\norm{\lapkb\dnmt} \le
    \rho_\degnt^{\lceil\tfrac{t-1}{B}\rceil-2} \norm{\dnm_1}
    +\sum_{s=1}^{t-1} \rho_\degnt^{\lceil\tfrac{t-1-s}{B}\rceil-2}
    \norm{\inputes}\,,
  \end{align}
  where  $\rho_\degnt\defin 1-\frac{\degnt}{4 N^2}$ .
  % $\rho_\degnt\defin 1-\frac{\degnt}{4 N^2}$.
%  \begin{align*}%\label{eq:def-rho-cummulative-disagreement}
%    \rho_\degnt\defin 1-\frac{\degnt}{4 N^2} .
%  \end{align*}
  % \begin{align*}
  %   \rho_\degnt\defin 1-\frac{\degnt}{4 N^2}
  % \end{align*}
  To derive~\eqref{eq:iss-dnm} we use three facts: first
  $\norm{\inputet}\le\norm{\pertdnmt} +\norm{\rnxtp}\le
  2\norm{\pertdnmt}$; second, $\sum_{k=0}^{\infty}r^{k} =
  \tfrac{1}{1-r}$ for any $r \in (0,1)$ and in particular for
  $r=\rho_\degnt^{1/B}$; and third,
  \begin{align*}
    \rho_\degnt^{-1}=\tfrac{1}{1-\degnt/(4 N^2)}\le
    \tfrac{1}{1-1/(4N^2)} =\tfrac{4N^2}{4 N^2-1} \le \tfrac{4}{3}.
  \end{align*}
  {\color{black} The constant~$\consissu$ in the statement is obtained
    recalling that
    % using~\eqref{eq:def-rho-cummulative-disagreement}
    \begin{align*}
      r=\rho_\degnt^{1/B}=\Big(1-\frac{\degnt}{4 N^2}\Big)^{1/B} .
    \end{align*}
  } To obtain~\eqref{eq:cum-iss-dnm}, we
  sum~\eqref{eq:norm-lapkb-rhos-dnm} over the time horizon~$t'$ and
  bound the double sum as follows: using $r=\rho_\degnt^{1/B}$ for
  brevity, we have
  \begin{align*}
    \sum_{t=2}^{t'} \sum_{s=1}^{t-1} r ^{t-1-s} \norm{\inputes} & =
    \sum_{s=1}^{t'-1} \sum_{t=s+1}^{t'} r^{t-1-s} \norm{\inputes}
    \\
    \sum_{s=1}^{t'-1} \norm{\inputes}\sum_{t=s+1}^{t'} r^{t-1-s} & \le
    \frac{1}{1-r} \sum_{s=1}^{t'-1} \norm{\inputes} .
  \end{align*}
  Finally, we use again the bound $\norm{\inputet}\le
  2\norm{\pertdnmt}$. 
\end{proof}

\end{document}